\title{Gadget construction and structural convergence\thanks{This paper is part of a project that has received funding from the European Research Council (ERC) under the European Union’s Horizon 2020 research and innovation programme (grant agreement No 810115 – Dynasnet).}}
\author[1,2]{David Hartman}
\author[1]{Tom\'{a}\v{s} Hons}
\author[1]{Jaroslav Ne\v{s}et\v{r}il}
\affil[1]{\small Computer Science Institute, Faculty of Mathematics and Physics, Charles University, Prague, Czech Republic}
\affil[2]{\small Institute of Computer Science of the Czech Academy of Sciences, Prague, Czech Republic}
\date{}
\theoremstyle{plain}
\newtheorem{theorem}{Theorem}[section]
\newtheorem{corollary}[theorem]{Corollary}
\newtheorem{lemma}[theorem]{Lemma}
\newtheorem{proposition}[theorem]{Proposition}
\newtheorem*{question}{Question}
\theoremstyle{definition}
\newtheorem{definition}{Definition}
\newtheorem{example}{Example}
\theoremstyle{remark}
\newtheorem*{remark}{Remark}
\newtheorem{fact}{Fact}
\newcommand{\bigland}{\bigwedge}
\DeclareMathOperator\dist{dist}
\DeclareMathOperator\Gad{Gad}
\DeclareMathOperator\Eq{Eq}
\DeclareMathOperator\qranksym{qrank}
\newcommand{\qrank}[1]{{\qranksym({#1})}}
\DeclareMathOperator\aritysym{ar}
\newcommand{\arity}[1]{{\aritysym({#1})}}
\newcommand{\N}{\mathbb{N}}
\DeclareMathOperator\FO{FO}
\DeclareMathOperator\QF{QF}
\DeclareMathOperator{\EF}{EF}
\newcommand{\FOloc}[1]{\FO_{#1}^\textnormal{loc}}
\newcommand{\FOcloc}[1]{\FO_{#1}^{\textnormal{c-loc}}}
\newcommand{\stonepar}[2]{\langle #1, #2 \rangle}
\newcommand{\str}[1]{\mathbf{#1}}
\newcommand{\strseq}[1]{{\boldsymbol{\mathsf{#1}}}}
\newcommand{\seq}[1]{{\mathsf{#1}}}
\newcommand{\Los}{\L{}o\'{s}}
\newcommand{\maxarity}[1]{\Delta_{#1}}
\newcommand{\eps}{\varepsilon}
\newcommand{\tpl}[1]{{\bm{#1}}}
\DeclareMathOperator\Int{Int}
\DeclareMathOperator\Ext{Ext}
\newcommand{\nonroot}{\tikzmarknode[strike out,draw]{1}{\bullet}}
\DeclareMathOperator{\liftsym}{L}
\newcommand{\lift}[2]{\liftsym_{#1}({#2})}
\DeclareMathOperator{\ellim}{el-lim}
\begin{document}

\maketitle

\begin{abstract}
Ne\v{s}et\v{r}il and Ossona de Mendez recently proposed a new definition of graph convergence called structural convergence.
The structural convergence framework is based on the probability of satisfaction of logical formulas from a fixed fragment of first-order formulas.
The flexibility of choosing the fragment allows to unify the classical notions of convergence for sparse and dense graphs.
Since the field is relatively young, the range of examples of convergent sequences is limited and only a few methods of construction are known.
Our aim is to extend the variety of constructions by considering the gadget construction.
We show that, when restricting to the set of sentences, the application of gadget construction on elementarily convergent sequences yields an elementarily convergent sequence.
On the other hand, we show counterexamples witnessing that a generalization to the full first-order convergence is not possible without additional assumptions.
We give several different sufficient conditions to ensure the full convergence.
One of them states that the resulting sequence is first-order convergent if the replaced edges are dense in the original sequence of structures.
\end{abstract}

\tableofcontents

\section{Introduction}\label{sec:introduction}

\todo{Consolidate labels (once).}

The area of asymptotic properties of graphs and relational structures was recently enriched by study of convergence and limit objects.
Several types of convergences were studied mostly based on counting of homomorphisms, cut-metric convergence, and local convergence.
The area is using analysis, probability and graph theory as the main tools.
See~\cite{large_networks}\cite{counting_graph_homomorphism}\cite{limits_of_dense_graph_sequences}\cite{benjamini_schramm}\cite{elek}\cite{elek_tardos}.

In an effort to consolidate this spectrum and to find a useful type of convergence for sparse structures, many different approaches were unified in a common framework called \emph{structural convergence} \cite{unified_approach}\cite{clustering}.
It is this type of convergence, which is based on model theory and combinatorics, that is the subject of this paper.

Gadget construction (also called replacement or indicator construction) is a natural method for hierarchical assembly of graphs and other structures with applications in algebraic graph theory \cite{frucht}\cite{hedrlin_pultr}\cite{hell_nesetril}, complexity \cite{garey_johnson}\cite{feder_vardi}, and category theory \cite{pultr_trnkova}\cite{nesetril_ossona_de_mendez_categories}\cite{hubicka_nesetril}.

The goal of this paper is to treat gadget construction as a vital tool for the area of structural convergence.
We examine the effects of gadget construction $*$ on $X$-convergent sequences with a particular focus on the following question:
if $(\str{A}_n)_{n \in \N}$ is an $X$-convergent sequence of base structures and $(\str{G}_n)_{n \in \N}$ an $X$-convergent sequence of gadgets, is the sequence $(\str{A}_n * \str{G}_n)_{n \in \N}$ of results of the gadget construction $X$-convergent as well?
In such a case, we say that the $X$-convergence is preserved by the gadget construction.
This is studied and characterized in the presented paper.

We focus separately on elementary and local convergence, whose combination implies full first-order convergence (see Section~\ref{ssec:structural_convergence}).
We show that gadget construction $*$ is continuous when considered as a mapping between spaces of structures with metrics based on elementary equivalence (Theorem~\ref{thm:continuity_of_gadget_construction}).
It follows that gadget construction preserves elementary convergence (Corollary~\ref{cor:preservation_of_elementary_convergence}).
This is not true for local convergence (Examples~\ref{ex:basic_fluctuation}, \ref{ex:counterexample_subdivision}, and \ref{ex:magnification}) and additional assumptions are necessary (Theorem~\ref{thm:general_local_convergence}).
In particular, local convergence is preserved if the replaced edges are \emph{dense} in the sequence of base structures in the sense that the limit density (proportion of present vs. possible edges) is positive (Corollary~\ref{cor:fo_dense_edges}).
Moreover, under some additional assumptions, we prove that the given sufficient conditions for local convergence are optimal (Theorem~\ref{thm:inverse_for_construction_language}).
Nevertheless, we show that both for elementary and local convergence the conditions on the sequence of base structures can be relaxed provided that the gadgets are stretching (Theorem~\ref{thm:preservation_of_elementary_convergence_with_fragmentation} and Corollary~\ref{cor:loc_fragmented_structures}).

We present two simple applications of gadget construction.
We show that an arbitrary sequence of structures is $\FO$-convergent if and only if a modified sequence of very sparse structures is $\FO$-convergent (Proposition~\ref{prop:reduction_to_sparse}).
Moreover, we give a short probabilistic construction of a sequence of graphs which is almost surely $\FO_{k-1}$-convergent but not $\FO_k$-convergent for any fixed $k \geq 2$ (Example~\ref{ex:bounded_convergence}).

An essential part of this paper is based on the thesis of the second author \cite{diploma_thesis}.

\paragraph*{Organization}
In Section~\ref{sec:prelim}, we briefly introduce all necessary notions and used notation.
Section~\ref{sec:elementary_convergence} contains our results on elementary convergence.
In Section~\ref{sec:obstacles}, we show that local convergence is not always preserved and identify the main obstacles.
Section~\ref{sec:positive_cases} is devoted to positive results on preservation of local convergence, which are complemented by inverse theorems in Section~\ref{sec:inverse_theorems}.
In Section~\ref{sec:applications}, we give two simple applications of the developed theory.
The last section contains concluding remarks and open problems.
\section{Preliminaries}\label{sec:prelim}

We use $\N = \{1, 2, \dots\}, \N_0 = \{0\} \cup \N$ and $[n] = \{1, \dots, n\}, [n]_0 = \{0\} \cup [n]$.

Our languages are relational with equality and possibly with constants.
All arities are finite.
Generic languages are denoted by the Greek letter $\lambda$ while languages related to gadget construction will be denoted by $L$, possibly with various subscripts or superscripts.
The arity of a symbol $S \in \lambda$ is written as $\arity{S}$.
We use $\maxarity{\lambda}$ to denote $\max_{S \in \lambda} \arity{S}$.
The set of all first-order formulas of the language $\lambda$ is written as $\FO(\lambda)$ while $\FO_p(\lambda)$ is used for the formulas with $p$ free variables.
In particular, $\FO_0(\lambda)$ stands for the set of $\lambda$-sentences.
We often omit the explicit mention of the language and write $\FO, \FO_p$, etc. instead.

A structure $\str{A}$ over a language $\lambda$, a $\lambda$-structure, is given by its vertex set and realizations of the symbols from $\lambda$.
The structures are denoted by boldface letters $\str{A}, \str{B}$, etc., the vertex set of $\str{A}$ is $V(\str{A})$ and the realization of a symbol $S \in \lambda$ in the structure $\str{A}$ is $S^\str{A} \subseteq V(\str{A})^{\arity{S}}$.
The elements of $S^\str{A}$ are called $S$-edges, or simply edges.
Our structures are finite unless mentioned otherwise.

Let $\lambda'$ be an extension of $\lambda$ by some symbols (this expression implicitly assumes that the extending symbols are not in $\lambda$).
Then a $\lambda'$-structure $\str{A}$ can be regarded as a $\lambda$-structure $\str{B}$ by forgetting the realizations of excessive symbols outside $\lambda$.
The structure $\str{B}$, also denoted by $\str{A}|_\lambda$, is called the $\lambda$-shadow of $\str{A}$, while $\str{A}$ is a $\lambda'$-lift of $\str{B}$.
Naturally, $\str{B}$ can be viewed as a $\lambda'$-structure with empty realization $S^\str{A}$ for every $S \in \lambda' \setminus \lambda$ provided that all the additional symbols are relational.

The distance of vertices $u$ and $v$ in the structure $\str{A}$, written as $\dist_\str{A}(u,v)$, is defined as their distance in the Gaifman graph of the structure $\str{A}$.
We use the usual convention that a pair of unreachable vertices has distance $+\infty$.

The tuples (e.g. elements of $S^\str{A}$ or free variables of a formula) are named by boldface lowercase letters $\tpl{a}, \tpl{b}, \tpl{x}$, etc. and we refer to their elements using indices, e.g. $a_1, b_i, x_n$.
Occasionally, after an explicit mention, we use the function notation regarding an $n$-tuple as a function on $[n]$.
The $i$-th element of a tuple $\tpl{a}$ is then referred to as $\tpl{a}(i)$.
A tuple of length $p$ is called a $p$-tuple and the length of a tuple $\tpl{a}$ is denoted by $|\tpl{a}|$.

Let $X$ be a subset of $V(\str{A})$.
The set of vertices in the distance at most $r$ from $X$ in $\str{A}$, the $r$-neighborhood of $X$, is denoted by $N_\str{A}^r(X)$.
We write $\partial_\str{A} X$ for the boundary of $X$ in $\str{A}$, which is the set $N_\str{A}(X) \setminus X$.
The uniform measure on $V(\str{A})$ is denoted by $\nu_\str{A}$, i.e. $\nu_\str{A}(X)$ is the relative size of $X$ within $\str{A}$.
If $X$ does not contain any constants, then $\str{A} \setminus X$ stands for the substructure of $\str{A}$ induced by $V(\str{A}) \setminus X$.

Let $\str{A}$ be a $\lambda$-structure with a vertex $a$.
Then $(\str{A}, a)$ stands for the structure $\str{A}$ rooted at $a$.
Formally, we expand $\lambda$ by a new constant $c$ to the language $\lambda^+$ and $(\str{A}, a)$ is a $\lambda^+$-lift of $\str{A}$ with $c^{(\str{A},a)} = a$.
For a tuple of vertices $\tpl{a}$, the structure $(\str{A}, \tpl{a})$ is obtained by repeated rooting of vertices from $\tpl{a}$.
Let $\str{B}$ be a structure with roots and $r \in \N_0$.
By $\str{B}^r$ we denote the substructure of $\str{B}$ induced by the $r$-neighborhood of roots.

We use boldface sans-serif letters $\strseq{A}$ as a shorthand for sequences of structures.
That is, the sequence $(\str{A}_n)_{n \in \N}$ is denoted by $\strseq{A}$.
The lightface letters $\seq{X}$ are for sequences of sets.
If $f$ is a operator on structures or sets, then $f(\strseq{A})$ is the sequence $(f(\str{A}_n))_{n \in \N}$; similarly with more operands and relations.
For example, if $\seq{X} \subseteq V(\strseq{A})$, i.e. $\forall n: X_n \subseteq V(\str{A}_n)$, then $\nu_\strseq{A}(\seq{X})$ is the sequence $(\nu_{\str{A}_n}(X_n))_{n \in \N}$.
For a property $P$ of structures (or sets), we say that $\strseq{A}$ \emph{eventually} satisfies $P$ if there is $n_0 \in \N$ such that all $\str{A}_n$ for $n \geq n_0$ satisfy $P$.
For example, $\seq{X}$ eventually does not contain a root of $\strseq{A}$ if each $X_n$ from a certain index on does not contain a root.

In the rest of this section, we give basic definitions regarding gadget construction and structural convergence.
We also recall the Ehrenfeucht-Fra\"{i}ss\'{e} games, which one of our main tools.

\subsection{Gadget construction}\label{ssec:gadget_construction}

Gadget construction is an operation that takes two structures $\str{A}$ and $\str{G}$ and replaces each edge of a particular relation of $\str{A}$ by a copy of $\str{G}$ identifying a specified tuple of vertices of $\str{G}$ with the vertices of the replaced edge.

Throughout the paper, we fix a purely relational language $L$.

\begin{definition}[Base structure and gadget]\label{def:base_structure_and_gadget}
    Let $L_R$ be the language $L$ extended by a symbol $R$.
    An $L_R$-structure $\str{A}$ is called a \emph{base} structure.
    
    Let $L_G$ be the language $L$ extended by constants $z_1, \dots, z_{\arity{R}}$.
    An $L_G$-structure $\str{G}$ with pairwise distinct vertices $z_i^\str{G}$ is called a \emph{gadget}.
\end{definition}

\begin{definition}[Gadget construction]\label{def:gadget_construction}
    Let $\str{A}$ be a base structure and $\str{G}$ a gadget.
    By $\str{A} * \str{G}$ we denote the $L$-structure that is the result of gadget construction applied on the base structure $\str{A}$ and the gadget $\str{G}$.
    We define
    \[
        V(\str{A} * \str{G}) = \big(V(\str{A}) \cup (R^\str{A} \times V(\str{G})) \big) /_{\sim}
        ,
    \]
    where $\sim$ is the equivalence generated by the pairs $(a,(\tpl{e},v))$ satisfying that there is $i \in [\arity{R}]$ such that $a = e_i$ and $v = z_i^\str{G}$.
    Denote by $[x]$ the $\sim$-class of $x$.
    For a symbol $S \in L$ of arity $s$, we set
    \begin{align*}
          S^{\str{A} * \str{G}} 
        = 
             &\{([a_1], \dots, [a_s]) : (a_1, \dots, a_s) \in S^\str{A} \} \\
        \cup &\{([(\tpl{e},v_1)], \dots, [(\tpl{e},v_s)]) : \tpl{e} \in R^\str{A}, (v_1, \dots, v_s) \in S^\str{G}\}
        .
    \end{align*}
\end{definition}

\missingfigure{Example of gadget construction.}

We can view gadget construction as a replacement operation.
Each $R$-edge $\tpl{e} \in R^\str{A}$ is replaced in $\str{A}*\str{G}$ by a copy $\str{G}^\tpl{e}$ of $\str{G}$, identifying the vertices of $\tpl{e}$ with the roots of $\str{G}^\tpl{e}$.
All the copies $\str{G}^\tpl{e}$ are vertex-disjoint, except possibly for their roots.
We denote by $\iota_\tpl{e}$ the natural mapping from $\str{G}^\tpl{e}$ to $\str{G}$.
We remark that although we call the vertices $\iota_\tpl{e}^{-1}(\tpl{z}^\str{G})$ the roots of $\str{G}^\tpl{e}$, they do not interpret the constants $\tpl{z}$ in $\str{A}*\str{G}$, which is merely an $L$-structure.
Moreover, $\iota_\tpl{e}^{-1}$ is an embedding of $\str{G}|_L$ but not necessarily an isomorphism: extra edges (originating from $\str{A}$) may span the roots of $\str{G}^\tpl{e}$.

A vertex of $\str{A}*\str{G}$ is \emph{internal} if it contains (as an equivalence class) a vertex of $\str{A}$.
The remaining vertices of $\str{A}*\str{G}$, i.e. non-root vertices in some $\str{G}^\tpl{e}$, are \emph{external}.
We usually identify a vertex $a$ of $\str{A}$ with the vertex $[a]$ of $\str{A}*\str{G}$.
For an external vertex $a$ in $\str{G}^\tpl{e}$, let $\rho(a)$ denote the tuple $\tpl{e}$.

Throughout the paper, we represent the vertices of $\str{A} * \str{G}$ in the structures $\str{A}$ and $\str{G}$ with the aim to transfer convergence from $\strseq{A}$ and $\strseq{G}$ to $\strseq{A} * \strseq{G}$.
An internal vertex $a$ can be directly considered as a vertex of $\str{A}$ while an external vertex $a$ is uniquely given by the $R$-edge $\rho(a) \in R^\str{A}$ and the vertex $\iota_{\rho(a)}(a) \in V(\str{G})$.

\begin{remark}
    We emphasize that all edges, as tuples, have their implicit orientation (ordering of vertices).
    Therefore, if all symbols are binary, we are speaking about (colored) digraphs.
    If $\str{A}$ is a symmetric digraph, each pair of neighbors gets two copies of $\str{G}$ when constructing $\str{A}*\str{G}$ as there is one arc in each direction.
    It is possible to extend gadget construction to undirected graphs (putting only one copy of $\str{G}$ between neighbors) provided that the gadget itself is undirected in the sense that it admits an automorphism that swaps the roots.
    Similarly, we can apply gadget construction to hypergraphs.
    The techniques we develop here work equally well in the undirected setting.
    In our examples, we prefer to use undirected structures.
\end{remark}

\subsection{Structural convergence}\label{ssec:structural_convergence}

We briefly recall the basic definitions related to the structural convergence framework, see \cite{unified_approach} for a detailed exposition.

For a formula $\phi \in \FO_p(\lambda)$, $p \geq 1$, and a $\lambda$-structure $\str{A}$, the \emph{Stone pairing} $\stonepar{\phi}{\str{A}}$ is the probability that we have $\str{A} \models \phi(\tpl{a})$ for a uniformly chosen $p$-tuple $\tpl{a}$ of vertices of $\str{A}$.
In the special case of sentences, we set $\stonepar{\phi}{\str{A}} = 1$ if $\str{A} \models \phi$, and $\stonepar{\phi}{\str{A}} = 0$ otherwise.
Let $X$ be a subset of $\FO(\lambda)$.
A sequence $\strseq{A}$ of $\lambda$-structures is \emph{$X$-convergent} if the sequence $\stonepar{\phi}{\strseq{A}}$, i.e. $(\stonepar{\phi}{\str{A}_n})_{n \in \N}$, converges for each $\phi \in X$.
 
Apart from $\FO$-convergence, the important cases include $\FO_0$-convergence, also called \emph{elementary convergence}, and $\FOloc{}$-convergence, \emph{local convergence}, where $\FOloc{}$ is the set of \emph{local} formulas.
A formula is \emph{$r$-local} if its satisfaction depends only on the $r$-neighborhood of its free variables and \emph{local} if there is $r \in \N_0$ such that it is $r$-local.
The Gaifman theorem states that any $\phi \in \FO$ can be expressed as a boolean combination of sentences and local formulas~\cite{gaifman}, which implies that $\strseq{A}$ is $\FO$-convergent if and only if it is both elementarily convergent and local convergent \cite[Theorem~2.23]{unified_approach}.

On top of the local formulas, we define \emph{constant-local} formulas.
A formula is $r$-constant-local if its satisfaction depends only on the $r$-neighborhood the free variables \emph{and} constants.
The set of constant-local formulas is denoted by $\FOcloc{}$.
If the language is purely relational, it holds $\FOcloc{} = \FOloc{}$ and $\FOcloc{0} = \emptyset$.
In the general case, however, we have $\FOloc{} \subseteq \FOcloc{}$ and $\emptyset = \FOloc{0} \subseteq \FOcloc{0} \subseteq \FO_0$.
It is easy to see that a constant-local formula can be written as a boolean combination of constant-local sentences and local formulas.
(For the case of a single variable, single constant, and an $r$-constant-local formula: distinguish whether the variable and the root are in distance at most $2r$; this is possible by a $2r$-local formula.
If they are, a $3r$-local formula suffices.
Otherwise, the satisfaction depends independently on an $r$-local formula and $r$-constant-local sentence.)

Two $\lambda$-structures $\str{A}$ and $\str{B}$ (of arbitrary cardinality) are \emph{$k$-elementarily equivalent}, $\str{A} \equiv_k \str{B}$, if $\str{A} \models \phi \Leftrightarrow \str{B} \models \phi$ for each sentence $\phi$ of quantifier rank at most $k$.
The quantifier rank of a formula $\phi$, $\qrank{\phi}$, is the maximal depth of nesting of quantifiers in the structural tree of $\phi$.
The structures $\str{A}$ and $\str{B}$ are \emph{elementarily equivalent} if $\str{A} \equiv_k \str{B}$ for each $k \in \N$.
It is a well-known fact that $k$-elementary equivalence is an equivalence of finite index.
Moreover, each class $\mathcal{C}$ of $\equiv_k$ can be described by a formula $\phi$ with $\qrank{\phi} = k$ satisfying that $\str{A} \in \mathcal{C}$ if and only if $\str{A} \models \phi$.
This also applies to $\equiv_k$ on structures $(\str{A}, \tpl{a})$ with $p$ roots, where the roots can be supplied to the formula as arguments.
That is, $(\str{A}, \tpl{a}) \in \mathcal{C}$ if and only if $\str{A} \models \phi(\tpl{a})$ \cite{hodges}.


Elementary convergence coincides with a metric-convergence in the space $\lambda$-structures $\mathcal{A}$ (of arbitrary cardinality).
The distance $\rho(\str{A}, \str{B})$ of $\str{A}$ and $\str{B}$ is defined as $\inf \{ 2^{-k} : \str{A} \equiv_k \str{B} \}$.
The function $\rho$ is a pseudo-ultrametric on the set $\mathcal{A}$ and the space $(\mathcal{A}, \rho)$ is compact \cite[Proposition~2.18]{unified_approach}.
It follows from the definition that a sequence $\strseq{A}$ is elementarily convergent if and only if it is $\rho$-convergent.
Therefore, an elementarily convergent sequence $\strseq{A}$ has a $\rho$-limit in the space $(\mathcal{A}, \rho)$.
More precisely, there is a set of (possibly non-isomorphic, but elementarily equivalent) limits as $\rho$ is only a pseudo-metric.
Any such a limit structure is called an \emph{elementary limit} of $\strseq{A}$ and we denote it by $\ellim \strseq{A}$.
As an example of a particular elementary limit of $\strseq{A}$ serves the ultraproduct $\prod_{n \in \N} \str{A}_n / \mathcal{U}$, or rather $\prod_\N \strseq{A} / \mathcal{U}$, where $\mathcal{U}$ is a non-principal ultrafilter on $\N$ \cite{chang_keisler}.

\subsection{Ehrenfeucht-Fra\"{i}ss\'{e} games}\label{ssec:ef_games}

Here we recall the Ehrenfeucht-Fra\"{i}ss\'{e} games \cite{fraisse}\cite{ehrenfeucht}, which is an important tool in model theory, particularly in finite model theory.

The $k$-round Ehrenfeucht-Fra\"{i}ss\'{e} game ($\EF$-game for short) on $\lambda$-structures $\str{A}$ and $\str{B}$, denoted by $\EF_k(\str{A}; \str{B})$, is a perfect information game of two players \emph{Spoiler} and \emph{Duplicator}.
The game lasts for $k$ rounds, each of which consists of the following two steps: Spoiler chooses one of the structures $\str{A}$ or $\str{B}$ and picks a vertex from it.
Then Duplicator picks a vertex from the other structure.
We denote the vertex picked in the $i$-th round from the structure $\str{A}$ (resp. $\str{B}$) by $a_i$ (resp. $b_i$).
Duplicator wins if the structures $(\str{A}, \tpl{a})^0$ and $(\str{B}, \tpl{b})^0$ are isomorphic and loses otherwise.
Note that the only candidate for the isomorphism maps $a_i \mapsto b_i$ for each $i$ and $c^\str{A} \mapsto c^\str{B}$ for each constant $c$.

We write $\EF_k(\str{A}, \tpl{a}; \str{B}, \tpl{b})$ to emphasize that the tuples $\tpl{a}$ and $\tpl{b}$ were already selected while $k$ rounds remain to be played.
If a player has a winning strategy in a particular game, we say that the player wins the game.

The $\EF$-games are linked to the notion of elementary equivalence by the theorem of Fra\"{i}ss\'{e}: for structures $\str{A}$ and $\str{B}$, we have $\str{A} \equiv_k \str{B}$ if and only if Duplicator wins $\EF_k(\str{A};\str{B})$.
More generally, let $\tpl{a}$ and $\tpl{b}$ be $p$-tuples of vertices from $\str{A}$ and $\str{B}$, respectively, then Duplicator wins $\EF_k(\str{A},\tpl{a}; \str{B}, \tpl{b})$ if and only if $(\str{A},\tpl{a}) \equiv_k (\str{B},\tpl{b})$, i.e. $\str{A} \models \phi(\tpl{a}) \Leftrightarrow \str{B} \models \phi(\tpl{b})$ for every $\phi \in \FO_p$ with $\qrank{\phi} \leq k$ \cite{fraisse}.
\section{Elementary convergence}\label{sec:elementary_convergence}

In this section, we focus on sufficient conditions for elementary convergence of the sequence $\strseq{A} * \strseq{G}$.
In the first part, we use Ehrenfeucht-Fra\"{i}ss\'{e} games to prove that gadget construction $*$ is a continuous function between spaces of structures with respect to natural metrics.
As a consequence, we obtain that the sequence $\strseq{A} * \strseq{G}$ is elementarily convergent if both $\strseq{A}$ and $\strseq{G}$ are elementarily convergent.
In the second part, we introduce the idea of \emph{fragmentation}, which allows us to state a condition ensuring the elementary convergence of $\strseq{A} * \strseq{G}$ without requiring the elementary convergence of $\strseq{A}$.

Results of this section extend to structures of arbitrary cardinality.

\subsection{Continuity of gadget construction}\label{ssec:continuity_of_gadget_construction}

Let $(\mathcal{B},\rho_\mathcal{B}), (\mathcal{G},\rho_\mathcal{G})$, and $(\mathcal{R}, \rho_\mathcal{R})$ be the respective spaces of all base structures, gadgets, and structures resulting from gadget construction with the pseudo-metrics defined by elementary equivalence.
We consider the product space $(\mathcal{B},\rho_\mathcal{B}) \times (\mathcal{G}, \rho_\mathcal{G})$ with the distance between pairs $(\str{A}_1, \str{G}_1)$ and $(\str{A}_2, \str{G}_2)$ defined as $\max\{ \rho_\mathcal{B}(\str{A}_1, \str{A}_2), \rho_\mathcal{G}(\str{G}_1, \str{G}_2) \}$, which yields a compact pseudo-ultrametric space.
We claim that gadget construction is a continuous function with respect to these metrics.

\begin{theorem}\label{thm:continuity_of_gadget_construction}
    Gadget construction $*: (\mathcal{B}, \rho_\mathcal{B}) \times (\mathcal{G}, \rho_\mathcal{G}) \to (\mathcal{R}, \rho_\mathcal{R})$ is continuous.
\end{theorem}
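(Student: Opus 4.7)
The plan is to reformulate the continuity assertion in combinatorial terms and then build a Duplicator strategy on the composite structures from Duplicator strategies on the bases and on the gadgets. Unfolding the definition of the pseudo-ultrametric $\rho$, continuity of $*$ is equivalent to the following quantitative statement: for every $k \in \N$ there exists $k' \in \N$ such that whenever $\str{A}_1 \equiv_{k'} \str{A}_2$ and $\str{G}_1 \equiv_{k'} \str{G}_2$, one has $\str{A}_1 * \str{G}_1 \equiv_k \str{A}_2 * \str{G}_2$. By Fra\"{i}ss\'{e}'s theorem, this reduces to showing that from winning Duplicator strategies $\sigma_B$ in $\EF_{k'}(\str{A}_1; \str{A}_2)$ and $\sigma_G$ in $\EF_{k'}(\str{G}_1; \str{G}_2)$ one can assemble a winning Duplicator strategy in $\EF_k(\str{A}_1 * \str{G}_1; \str{A}_2 * \str{G}_2)$. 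I would aim for $k' = k(\arity{R} + 1)$.

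The main construction is to have Duplicator maintain a single simulated base game together with, in parallel, a separate simulated gadget game for each gadget copy that has been ``activated'' so far. Along the way a partial correspondence $\tpl{e} \leftrightarrow \tpl{f}$ between activated copies is kept, with $\tpl{f} \in R^{\str{A}_2}$ the image of $\tpl{e} \in R^{\str{A}_1}$ under the partial isomorphism produced by $\sigma_B$. When Spoiler selects an internal vertex of $\str{A}_1 * \str{G}_1$, Duplicator forwards the move to $\sigma_B$ and plays the response. When Spoiler selects an external vertex $x$ of a copy $\str{G}^\tpl{e}$ that has not yet been activated, Duplicator first asks $\sigma_B$ to respond to the $\arity{R}$ virtual moves $e_1, \dots, e_{\arity{R}}$, obtaining an $R$-edge $\tpl{f}$ on side~2 (the symbol $R$ is preserved by the partial $\equiv_{k'}$-isomorphism that $\sigma_B$ keeps), opens a fresh instance of $\sigma_G$ for the pair $(\str{G}^\tpl{e}, \str{G}^\tpl{f})$ with the roots already matched via the interpretations of the constants $\tpl{z}$, and only then consults this fresh $\sigma_G$ to answer Spoiler's actual move. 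When the copy is already activated, the associated $\sigma_G$-instance is used directly.

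A round count then controls $k'$. Each of the $k$ composite rounds costs at most $\arity{R}$ moves in the base game (zero if the picked vertex is external in an already activated copy, one if internal, $\arity{R}$ if it activates a new copy) plus at most one move in a single gadget-game instance, so $\sigma_B$ is consulted at most $k(\arity{R}+1)$ times and any particular copy of $\sigma_G$ is used at most $k$ times. With $k' = k(\arity{R}+1)$ every sub-strategy remains winning throughout. Running the parallel gadget games independently is legitimate because distinct gadget copies are vertex-disjoint apart from their roots, and these roots are pinned by the base game before the corresponding gadget game is ever opened.

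What remains is to verify that the terminal position is a partial isomorphism of $(\str{A}_1 * \str{G}_1)^0$ and $(\str{A}_2 * \str{G}_2)^0$. Since $L$ is purely relational, this reduces to preservation of every relation on the selected tuples. By the definition of $*$, any tuple of selected vertices realizing some $S \in L$ either consists entirely of internal vertices and originates from $S^{\str{A}_1}$, which is preserved by $\sigma_B$, or lies inside a single copy $\str{G}^\tpl{e}$ and originates from $S^{\str{G}_1}$, which is preserved by the dedicated instance of $\sigma_G$. The main obstacle, and the step I would verify most carefully, is the interface between the two families of games: one must check that the base game and each gadget game never commit the same root to two different images. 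This is guaranteed by the invariant that the image of any root is always resolved first by $\sigma_B$ (either through a direct internal pick or through the $\arity{R}$ virtual moves at activation) and is then inherited by the gadget game through the constants $\tpl{z}$, so the gadget game never needs to pick a root itself.
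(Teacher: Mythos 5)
Your proposal is correct and takes essentially the same route as the paper: the paper proves exactly this quantitative reduction (Lemma~\ref{lem:continuity_bound_via_ef_games}, with bounds $\str{A}_1 \equiv_{k\cdot\arity{R}} \str{A}_2$ and $\str{G}_1 \equiv_k \str{G}_2$) by compiling Duplicator's strategy in $\EF_k(\str{A}_1*\str{G}_1;\str{A}_2*\str{G}_2)$ from a base game in which Spoiler is forced to play the whole $R$-edge $\tpl{e}$ whenever a new copy is touched, and a gadget game whose response is transported back via $\iota_{\tpl{f}}^{-1}$, with the same case split (edge from $S^{\str{A}}$ vs.\ from $S^{\str{G}}$) in the correctness check. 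The only cosmetic difference is that the paper reuses a single shared game $\EF_k(\str{G}_1;\str{G}_2)$ for all gadget copies instead of opening a fresh instance per activated copy, which merely changes the constants and is immaterial for continuity.
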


As an immediate corollary, using basic properties of the continuous functions, we obtain that gadget construction preserves elementary convergence and commutes with taking the elementary limit.

\begin{corollary}\label{cor:preservation_of_elementary_convergence}
    Let $\strseq{A}$ be an elementarily convergent sequence of base structures, $\strseq{G}$ an elementarily convergent sequence of gadgets.
    Then the sequence $\strseq{A} * \strseq{G}$ is elementarily convergent and we have
    \[
        \ellim (\strseq{A}*\strseq{G}) \equiv (\ellim \strseq{A})*(\ellim \strseq{G})
        .
    \]
\end{corollary}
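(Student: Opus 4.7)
The plan is to reduce continuity of $*$ to a uniform, quantitative claim about elementary-equivalence rank: for every $k \in \N$ I exhibit some $K = K(k)$ such that whenever $\str{A}_1 \equiv_K \str{A}_2$ and $\str{G}_1 \equiv_K \str{G}_2$, also $\str{A}_1 * \str{G}_1 \equiv_k \str{A}_2 * \str{G}_2$. Since all three (pseudo-)metrics are defined by $\inf\{2^{-k} : \equiv_k\}$, such a bound immediately yields uniform continuity of $*$, which is stronger than what the theorem demands.

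By Fra\"{i}ss\'{e}'s theorem recalled in Section~\ref{ssec:ef_games}, the claim becomes a game statement: given winning strategies $\sigma_B$ for Duplicator in $\EF_K(\str{A}_1;\str{A}_2)$ and $\sigma_G$ in $\EF_K(\str{G}_1;\str{G}_2)$, I build a winning strategy $\sigma$ in $\EF_k(\str{A}_1 * \str{G}_1;\, \str{A}_2 * \str{G}_2)$. Note that $\sigma_B$ plays on $L_R$-structures, so whenever a tuple $\tpl{e}_1 \in R^{\str{A}_1}$ is answered coordinate-by-coordinate by $\sigma_B$ with $\tpl{e}_2$, the partial-isomorphism property forces $\tpl{e}_2 \in R^{\str{A}_2}$; analogously, $\sigma_G$ plays on $L_G$-structures, so the roots are matched $z_i^{\str{G}_1} \leftrightarrow z_i^{\str{G}_2}$ throughout.

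The construction of $\sigma$ is by simulation. Duplicator keeps one running instance of the base game on $\str{A}_1, \str{A}_2$ and, for every $R$-edge $\tpl{e}_1 \in R^{\str{A}_1}$ that gets ``touched'' (some external vertex of $\str{G}^{\tpl{e}_1}$ has already been picked), one running instance of the gadget game on $\str{G}_1, \str{G}_2$, whose position I think of as also ruling the copies $\str{G}^{\tpl{e}_1}, \str{G}^{\tpl{e}_2}$ via the natural bijections $\iota_{\tpl{e}_1}, \iota_{\tpl{e}_2}$. When Spoiler plays an internal $v$, Duplicator feeds $v$ into $\sigma_B$ and returns the answer. When Spoiler plays an external $v$ in $\str{G}^{\tpl{e}_1}$, Duplicator first ensures that the matching $\tpl{e}_2$ is defined by pumping every yet-unplayed coordinate of $\tpl{e}_1$ through $\sigma_B$, then (starting a fresh gadget-game instance for $\tpl{e}_1$ if needed) consults $\sigma_G$ on $\iota_{\tpl{e}_1}(v)$ and lifts the answer back through $\iota_{\tpl{e}_2}^{-1}$ into $\str{G}^{\tpl{e}_2}$. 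Each outer round contributes at most $\arity{R}+1$ moves to the base game and at most one move to any single gadget game, so $K := k(\arity{R}+1)$ suffices.

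The step I expect to need the most care is verifying that this play yields a partial isomorphism of the output $L$-structures. Every $L$-edge of $\str{A}_1 * \str{G}_1$ is either inherited from $\str{A}_1$ between internal vertices (preserved by $\sigma_B$) or lies entirely inside one copy $\str{G}^{\tpl{e}_1}$ (preserved by the associated $\sigma_G$-instance, since those edges are pulled back from $\str{G}_1$ via $\iota_{\tpl{e}_1}$). The delicate bookkeeping is when two touched edges share an internal vertex, or when Spoiler later picks an internal vertex coinciding with a root of an already-touched edge: each such internal pick is still routed through $\sigma_B$, and because the rooting of every gadget-game instance was pinned to $\tpl{e}_1 \mapsto \tpl{e}_2$ through $\sigma_B$ at creation time, it remains consistent with the base-game partial isomorphism for the rest of the play. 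Since $L$ is purely relational, no $L$-edges cross between different copies or between a copy's interior and non-root vertices of $\str{A}$, so the case analysis is exhaustive and Duplicator wins.
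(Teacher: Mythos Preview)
Your approach is the paper's: reduce the corollary to continuity (Theorem~\ref{thm:continuity_of_gadget_construction}) and prove continuity via an explicit EF-game bound. This is exactly Lemma~\ref{lem:continuity_bound_via_ef_games}; your bound $K=k(\arity{R}+1)$ is merely a bit looser than the paper's $k\cdot\arity{R}$ on the base side and $k$ on the gadget side. The one organizational difference is that you run a separate gadget game per touched copy, whereas the paper routes all external picks through a single shared game $H_\str{G}$; either bookkeeping works.

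One case slips through your verification. If an $S$-edge of $\str{G}_1$ lies entirely among the roots, say on $z_{j_1},\dots,z_{j_s}$, then in $\str{A}_1*\str{G}_1$ it sits on the internal vertices $e_{1,j_1},\dots,e_{1,j_s}$ for some $\tpl{e}_1\in R^{\str{A}_1}$, and Spoiler can pick those vertices without ever picking an external vertex of $\str{G}^{\tpl{e}_1}$. In that scenario ``the associated $\sigma_G$-instance'' you invoke simply does not exist, so case~(b) of your split does not apply; and $\sigma_B$ alone need not see this $S$-edge, since it did not come from $\str{A}_1$. The fix is the one the paper spells out explicitly: your budget leaves enough unused rounds in $\sigma_B$ that Duplicator's winning strategy can hypothetically be extended to the remaining coordinates of $\tpl{e}_1$, which forces the existence of some $\tpl{e}_2\in R^{\str{A}_2}$ with $v_i=e_{2,j_i}$; the edge then transfers because $S(z_{j_1},\dots,z_{j_s})$ is an atomic $L_G$-sentence and hence preserved under $\str{G}_1\equiv_K\str{G}_2$. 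With that case added, your argument is complete.
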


As noted before, the limit of $\strseq{A}$ can be expressed by an ultraproduct $\left( \prod \strseq{A} / \mathcal{U} \right)$ over a non-principal ultrafilter.
Hence we have that
\[
    \prod (\strseq{A} * \strseq{G}) / \mathcal{U} 
    \equiv
    \left( \prod \strseq{A} / \mathcal{U} \right)
    * 
    \left( \prod \strseq{G} / \mathcal{U} \right)
    ,
\]
where $\strseq{A}$ and $\strseq{G}$ are elementarily convergent sequences.

A routine use of the \Los{} theorem~\cite{los} (the fundamental theorem of ultraproducts) shows that a similar statement holds for general indexed families.
That is, if $I$ is an index set with an ultrafilter $\mathcal{U}$, then for arbitrary families $(\str{A}_i)_{i \in I}$ of base structures and $(\str{G}_i)_{i \in I}$ of gadgets we have
\[
    \prod_I (\str{A}_i * \str{G}_i) / \mathcal{U} 
    \equiv 
    \left( \prod_I \str{A}_i / \mathcal{U} \right) * \left( \prod_I \str{G}_i / \mathcal{U} \right)
    .
\]

\subsubsection{Proof of Theorem~\ref{thm:continuity_of_gadget_construction}}\label{sssec:proof_ef_games}

Here we prove Theorem~\ref{thm:continuity_of_gadget_construction} using Ehrenfeucht-Fra\"{i}ss\'{e} games.
A refinement of the idea is later used in Section~\ref{sssec:proof_of_fragmentation}.

Recall that an internal vertex $a$ of $\str{A} * \str{G}$ correspond to a vertex of $\str{A}$ while an external vertex $a$ lies in a unique copy $\str{G}^\tpl{e}$, where $\tpl{e} = \rho(a)$, and corresponds to the non-root vertex $\iota_\tpl{e}(a)$ of $\str{G}$.

The following lemma gives a particular bound on the continuity of $*$.

\begin{lemma}\label{lem:continuity_bound_via_ef_games}
    Let $\str{A}_1, \str{A}_2$ be base structures and $\str{G}_1, \str{G}_2$ be gadgets satisfying
    \begin{equation*}
        \begin{gathered} 
            \str{A}_1 \equiv_{k \cdot \arity{R}} \str{A}_2
            , \\
            \str{G}_1 \equiv_{k} \str{G}_2
            .
        \end{gathered}
    \end{equation*}
    Then we have
    \[
        \str{A}_1 * \str{G}_1 \equiv_k \str{A}_2 * \str{G}_2
        .
    \]
\end{lemma}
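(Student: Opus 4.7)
By the Fra\"{i}ss\'{e} theorem, it suffices to construct a winning strategy for Duplicator in $\EF_k(\str{A}_1 * \str{G}_1; \str{A}_2 * \str{G}_2)$. My plan is to keep Duplicator's strategy bookkeeping in two layers. There is a single \emph{master} game $\EF_{k \cdot \arity{R}}(\str{A}_1; \str{A}_2)$ in which Duplicator follows a fixed winning strategy $\sigma_\mathcal{B}$ given by the hypothesis $\str{A}_1 \equiv_{k \cdot \arity{R}} \str{A}_2$. In addition, as soon as an $R$-edge $\tpl{e} \in R^{\str{A}_1}$ is \emph{touched} by the combined game, Duplicator spawns a \emph{local} game $\EF_k(\str{G}_1; \str{G}_2)$ attached to $\tpl{e}$ and plays in it using a fixed winning strategy $\sigma_\mathcal{G}$ guaranteed by $\str{G}_1 \equiv_k \str{G}_2$; the symmetric layer on the $\str{A}_2 * \str{G}_2$ side is maintained in parallel.

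Responding to a Spoiler move in $\str{A}_1 * \str{G}_1$ (the other side is analogous) proceeds by cases. If the move is an internal vertex, it is simply played in the master game and the corresponding response in $\str{A}_2$ is returned (reusing the $\sigma_\mathcal{B}$-answer if the vertex was already played). If the move is an external vertex $a$ in a copy $\str{G}_1^\tpl{e}$ and $\tpl{e}$ has never been touched, Duplicator first feeds the still-unplayed coordinates of $\tpl{e}$ one by one into the master game, obtaining a tuple $\tpl{e}'$; because $\sigma_\mathcal{B}$ preserves the relation $R$ together with the pattern of equalities among coordinates, $\tpl{e}'$ lies in $R^{\str{A}_2}$ and pins down the copy $\str{G}_2^{\tpl{e}'}$. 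In either external subcase, Duplicator then plays $\iota_\tpl{e}(a)$ in the local game for $\tpl{e}$ via $\sigma_\mathcal{G}$, receives a response $v' \in V(\str{G}_2)$ which is not a root (since $\sigma_\mathcal{G}$ preserves the constants $\tpl{z}$), and returns the external vertex of $\str{G}_2^{\tpl{e}'}$ corresponding to $v'$.

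The round budget balances: each round of the combined game spends at most $\arity{R}$ rounds in the master game (these are used only for root setup; the external picks themselves do not consume any) and at most one round in a single local game. Hence the totals $k \cdot \arity{R}$ and $k$ are respected, and both $\sigma_\mathcal{B}, \sigma_\mathcal{G}$ remain winning throughout. Correctness of the final position follows from the definition of $*$: any edge among the picked vertices either lies entirely on internal vertices, hence originates from $\str{A}_1$ and is certified by the master game, or lies entirely inside a single copy $\str{G}_1^\tpl{e}$ and is certified by the local game for $\tpl{e}$; no edge crosses distinct copies. The main obstacle I foresee is the consistency between the two layers at shared roots---a root of a copy is an internal vertex which may be picked before or after that copy is touched, so the master and local games must agree on it. This is handled by having the master game fix the correspondence $\tpl{e} \leftrightarrow \tpl{e}'$ once and for all at setup time, which simultaneously fixes the interpretations of the constants $\tpl{z}$ in every local game attached at those roots, and forces Duplicator's answers to internal vertices to be consistent with all local games in which they serve as roots.
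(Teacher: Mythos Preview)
Your strategy is sound and essentially the paper's: simulate a master game on $\str{A}_1,\str{A}_2$ and gadget games on $\str{G}_1,\str{G}_2$ to steer Duplicator. The paper differs only in organization---it runs a \emph{single} auxiliary game $\EF_k(\str{G}_1;\str{G}_2)$ instead of one per touched copy, and whenever an external vertex of $\str{G}_1^{\tpl{e}}$ is selected it feeds the \emph{entire} tuple $\tpl{e}$ into the master game rather than only the unplayed coordinates. Neither difference affects the bound.

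Your correctness argument, however, has a gap. The dichotomy ``either lies entirely on internal vertices, hence originates from $\str{A}_1$, or lies entirely inside a single copy $\str{G}_1^{\tpl{e}}$'' is wrong in its first clause: an $S$-edge of $\str{A}_1*\str{G}_1$ can span only internal vertices and still come from a gadget copy---this happens precisely when $\str{G}_1$ carries an $S$-edge among its roots, say $(z_{j_1},\dots,z_{j_s})\in S^{\str{G}_1}$. If Spoiler picks only the internal vertices $u_i=e_{j_i}$ for some $\tpl{e}\in R^{\str{A}_1}$, your scheme never touches $\tpl{e}$, no local game is spawned, and the master game has recorded only some coordinates of $\tpl{e}$. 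To certify the image you must exhibit $\tpl{f}\in R^{\str{A}_2}$ with $f_{j_i}=v_i$; nothing you have written provides it. The paper closes exactly this case by noting that enough rounds remain in the master game for Spoiler to (hypothetically) pick the remaining coordinates of $\tpl{e}$; since Duplicator follows a winning strategy, the completed response is an $R$-edge $\tpl{f}$, and together with $(z_{j_1},\dots,z_{j_s})\in S^{\str{G}_2}$ (which holds as $\str{G}_1\equiv_0\str{G}_2$) this produces the required $S$-edge on the $v_i$ in $\str{A}_2*\str{G}_2$. Your budget does support this virtual completion---each internal round spends at most one master move instead of $\arity{R}$, leaving at least $\arity{R}-1$ spare---so the fix is routine, but the case must be argued rather than folded into ``originates from $\str{A}_1$''.
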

\begin{proof}
    We give an algorithm showing that Duplicator's winning strategy in the game $H = \EF_k(\str{A}_1*\str{G}_1; \str{A}_2*\str{G}_2)$ can be compiled from winning strategies in the games $H_\str{A} = \EF_{k \cdot \arity{R}}(\str{A}_1; \str{A}_2)$ and $H_\str{G} = \EF_k(\str{G}_1; \str{G}_2)$ and prove its correctness.
    In each round of $H$, the deduction of Duplicator's response follows Algorithm~\ref{alg:duplicator_strategy_simple}.
    
\begin{algorithm}
    \caption{Duplicator's response in one round of $H$}
    \label{alg:duplicator_strategy_simple}
    \begin{algorithmic}[1]
        \State $S, D \gets$ indices of the Spoiler's and Duplicator's structure in this round
        \State $u \gets$ vertex chosen by Spoiler from $\str{A}_S * \str{G}_S$ 
        \If{$u$ is internal}
            \State Let Spoiler pick $\str{A}_S$ and the vertex $u$ in $H_\str{A}$
            \State $v \gets$ Duplicator's response in $H_\str{A}$
        \Else
            \State $\tpl{e} \gets \rho(u)$
            \State Let Spoiler pick $\str{A}_S$ and all the vertices of $\tpl{e}$ in $H_\str{A}$
                \label{step:pick_r_edge}
            \State $\tpl{f} \gets$ Duplicator's response in $H_\str{A}$
                \label{step:selecting_corresponding_r_edge}
            \State Let Spoiler pick $\str{G}_S$ and the vertex $\iota_\tpl{e}(u)$ in $H_\str{G}$
            \State $v' \gets$ Duplicator's response in $H_\str{G}$
            \State $v \gets \iota_\tpl{f}^{-1}(v')$
        \EndIf
        \State Vertex $v$ is the Duplicator's response
    \end{algorithmic}
\end{algorithm}

    We prove that this is a winning strategy.
    First observe that we do not exceed the number rounds of the game $H_\str{A}$ nor $H_\str{G}$ and that the tuple $\tpl{f}$ obtained in Step~\ref{step:selecting_corresponding_r_edge} is an $R$-edge, which makes the vertex $v$ well defined. 

    Let $\tpl{a}$ and $\tpl{b}$ be the $t$-tuples chosen from $\str{A}_1*\str{G}_1$ and $\str{A}_2*\str{G}_2$ after $t$ rounds.
    We want $\alpha: a_i \mapsto b_i$ to be an isomorphism between the substructures induced by $\tpl{a}$ and $\tpl{b}$.
    Suppose that there is an edge $\tpl{u} \in S^{\str{A}_1*\str{G}_1}$ with $u_j = a_{i_j}$ for some indices $i_j \in [t]$ for all $j \in [\arity{S}]$.
    We need to show that $\alpha(\tpl{u}) = \tpl{v} \in S^{\str{A}_2*\str{G}_2}$ (the converse direction is by symmetry).
        
    We distinguish whether the $S$-edge spanning $\tpl{u}$ originated from $\str{A}_1$ or from $\str{G}_1$.
    If it is from $\str{A}_1$, all the vertices of $\tpl{u}$ are internal and we have $\tpl{u} \in S^{\str{A}_1}$.
    Let $\beta: V(\str{A}_1) \to V(\str{A}_2)$ be the partial isomorphism of the picked vertices in the game $H_\str{A}$ (i.e. the domain of $\beta$ contains only the picked vertices).
    We have $\beta(\tpl{u}) \in S^{\str{A}_2}$ as $\tpl{u}$ belongs to the domain of the partial isomorphism $\beta$.
    Moreover, $\tpl{v} = \beta(\tpl{u})$ by Algorithm~\ref{alg:duplicator_strategy_simple}.
    Therefore, $\tpl{v} \in S^{\str{A}_2*\str{G}_2}$.
    
    Now, suppose that the $S$-edge originated from $\str{G}_1$, i.e. there is an $R$-edge $\tpl{e}$ in $\str{A}_1$ such that all the vertices of $\tpl{u}$ belong to $\str{G}^\tpl{e}_1$; in particular, the internal vertices of $\tpl{u}$ belong to $\tpl{e}$.
    
    Observe that there is an edge $\tpl{f} \in R^{\str{A}_2}$ with $f_j = v_i$ if and only if $e_j = u_i$ for all $i,j$ such that all the vertices of $\tpl{v}$ belong to $\str{G}^\tpl{f}_2$.
    This is because Spoiler always has enough rounds in $H_\str{A}$ to select all the remaining vertices of $\tpl{e}$ and Duplicator needs to be able to mirror such a selection.
    However, $\tpl{f}$ need not to be uniquely determined as $\tpl{e}$ might not fully belong to the domain of $\beta$ (in such a case, each $u_i$ is internal).
    
    Let $\gamma: V(\str{G}_1) \to V(\str{G}_2)$ be the partial isomorphism from the game $H_\str{G}$, which maps the picked vertices and also corresponding roots to each other.
    As the $S$-edge on $\tpl{u}$ came from $\str{G}_1$, we have $\iota_\tpl{e}(\tpl{u}) \in S^{\str{G}_1}$.
    Since $\gamma$ is a partial isomorphism and $\iota_\tpl{e}(\tpl{u})$ belongs to its domain, it follows that $\gamma(\iota_\tpl{e}(\tpl{u})) \in S^{\str{G}_2}$.
    Finally, observe that $\gamma(\iota_\tpl{e}(\tpl{u})) = \iota_\tpl{f}(\tpl{v})$ by Algorithm~\ref{alg:duplicator_strategy_simple}.
    Hence, $\tpl{v} \in S^{\str{A}_2*\str{G}_2}$, which concludes the proof.
\end{proof}

We remark that a similar statement (with different bounds) can be proved via interpretations, which is another important model-theoretic tool that allows to transfer properties from one structure to another by \emph{defining} the latter structure in the former \cite{hodges}\cite{unified_approach}.
The construction of an appropriate interpretation of $\str{A} * \str{G}$ in the disjoint union of $\str{A}$ and $\str{G}$ follows the set-wise definition of $\str{A} * \str{G}$.
Such an interpretation also implies some weak results about preservation of $\FO$-convergence.

Nevertheless, we consider the $\EF$-games to be a more suitable tool for our purposes.
It provides us with fine-grained control which makes possible to prove more.
For instance, the results of the following section seem to be out of reach for interpretations.

\subsection[Fragmentation of R-edges]{Fragmentation of $R$-edges}\label{ssec:fragmentation}

Here we define \emph{fragmentation} of $R$-edges with the aim to give a sufficient condition for the elementary convergence of the sequence $\strseq{A} * \strseq{G}$ without requiring the elementary convergence of $\strseq{A}$.
We show that it is possible to remove a certain kind of information from the structures of $\strseq{A}$, which is irrelevant for the limit behavior of the sequence $\strseq{A} * \strseq{G}$.
The excessive information is the precise arrangement of $R$-edges, which we discard by their fragmentation.
The relaxed assumption then states that the elementary convergence of the sequence of fragmented base structures is sufficient.

\subsubsection{Motivation}\label{sssec:motivation_for_fragmentation}

The first-order logic is inherently local.
The Gaifman theorem states that any sentence can be expressed as a boolean combination of sentences of the form
\begin{equation}\label{eq:gaifman_sentence}
    \exists \tpl{y} 
    \left(
        \bigland_{1 \leq i < j \leq |\tpl{y}|} \dist(y_i, y_j) > 2r
        \land
        \bigland_{1 \leq i \leq |\tpl{y}|} \psi(y_i)
    \right)
    ,
\end{equation}
where the formula $\psi$ is $r$-local.

Consider the following example, where all the graphs are simple and undirected.

\begin{example}
    Let $\strseq{A}$ be a sequence of $d$-regular graphs with an increasing number of vertices.
    Let $\strseq{G}$ be a sequence of paths of increasing length with the endpoints as the roots.
    We claim that the sequence $\strseq{A}*\strseq{G}$ is elementarily convergent.

    For a sufficiently large $n \in \N$, we can distinguish vertices of $\str{A}_n * \str{G}_n$ by their $r$-neighborhood into those in distance $\ell \leq r$ from an internal vertex and the others whose $r$-neighborhood is a path.
    Let $\phi$ be a sentence of the form~\eqref{eq:gaifman_sentence}.
    Either the $r$-local formula $\psi$ is satisfied on vertices of one of these kinds, then $\str{A}_n * \str{G}_n \models \phi$ (as there is enough vertices of each kind), or $\str{A}_n * \str{G}_n \not\models \phi$.
    This is true for any large enough $n$; therefore, $\stonepar{\phi}{\strseq{A}*\strseq{G}}$ converges and $\strseq{A}*\strseq{G}$ is elementarily convergent.
\end{example}

Notice that the example contains an assumption only on the degrees of internal vertices while the exact interconnection of $R$-edges in $\str{A}_n$ is irrelevant.
Each individual internal vertex in $\str{A}_n*\str{G}_n$ sees how many gadget copies are attached to it.
However, as the gadgets grow and their roots tend away from each other, it becomes impossible the tell where the other ends of the gadget copies are attached.
This phenomenon is apparent in the limit: the distance of the gadget's roots grows to $+\infty$, which implies that they lie in distinct connected components of $\ellim \strseq{G}$.
The elementary limit of $\strseq{A}*\strseq{G}$ is an infinite collection of stars, each with $d$ infinite rays, with no connection among them.

This effect of growing gadgets occurs also in the general setting.
Let $\strseq{G}$ be an elementarily convergent sequence of gadgets.
We define $\sigma$ to be the equivalence on $[\arity{R}]$ with
\begin{equation}\label{eq:canonical_sigma}
\begin{aligned}
    (i,j) \in \sigma 
    &\Leftrightarrow 
    \lim \dist_\strseq{G}(z_i^\strseq{G}, z_j^\strseq{G}) < \infty \\
    &\Leftrightarrow
    z_i, z_j \text{ share a connected component in } \ellim \strseq{G}
    .
\end{aligned}
\end{equation}
We denote this canonical equivalence for the sequence $\strseq{G}$ by $\Eq(\strseq{G})$.
Abusing notation slightly, if the indices $i$ and $j$ are $\sigma$-equivalent, we also say that the roots $z_i$ and $z_j$ are $\sigma$-equivalent.

It is clear, at least if $\strseq{A}$ is elementarily convergent, that the exact positions of $R$-edges in $\ellim \strseq{A}$, which we denote by $\str{B}$, are irrelevant.
Only the the positions of \emph{subedges} that gather the vertices of $R$-edges on $\sigma$-equivalent indices matter.
In particular, suppose we permute the interconnection of $R$-edges in $\str{B}$, obtaining a structure $\str{C}$, in such a way that we preserve the subedges.
That is, for each class $X \subseteq [\arity{R}]$ of $\sigma$ there is a bijection $f_X: R^\str{B} \to R^\str{C}$ satisfying that
\[
    \forall \tpl{e} \in R^\str{B}
    \;
    \forall i \in X : \tpl{e}(i) = f_X(\tpl{e})(i)
    ,
\]
where we use the function notation for the tuples.
Then the structure $\str{C} * (\ellim \strseq{G})$ is exactly the same as $\str{B} * (\ellim \strseq{G})$.

Our goal is to draw this observation to the finite case, when the distances between $\sigma$-nonequivalent roots are large but possibly finite, and prove that the resulting structures are difficult to distinguish.
We start by defining the structure $\str{A}^\sigma$ which preserves the full information about the subedges from the base structure $\str{A}$ with respect to the equivalence $\sigma$.
We prove that the elementary convergence of the sequence of fragmented structures $\strseq{A}^\sigma$, together with the elementary convergence of $\strseq{G}$, is sufficient for the elementary convergence of $\strseq{A}*\strseq{G}$.

\subsubsection{Fragmentation}\label{sssec:fragmentation}

Let us define the fragmented base structure $\str{A}^\sigma$.
Note that we cannot simply project the $R$-edges to the $\sigma$-equivalent indices as that would lose track of their multiplicities.
Instead, we add an \emph{auxiliary} vertex to each subedge, which allows us to discern individual subedges.

We start by the definition of the language of $\str{A}^\sigma$.

\begin{definition}[Language of fragmented base structures]
    Let $\sigma$ be an equivalence on $[\arity{R}]$ with classes $X_1, X_2, \dots, X_\ell$.
    Additionally, we consider $X_0 = \emptyset$ to be a class of $\sigma$.
    Let $L_\sigma$ be the extension of $L$ by symbols $R_i$ of arity $|X_i| + 1$ for $i \in [\ell]_0$.
\end{definition}

The $+1$ in the arity of $R_i$ is for the auxiliary vertex.
The structure $\str{A}^\sigma$ can be formally defined as the result of gadget construction applied to $\str{A}$ with a certain canonical gadget for the equivalence $\sigma$.

\begin{definition}[Fragmentation]\label{def:fragmentation}
    Denote by $\Gad(\sigma)$ the gadget with
    \[
        V(\Gad(\sigma)) = \{z_1, \dots, z_{\arity{R}}\} \cup \{x_0, x_1, \dots, x_\ell\},
    \]
    where the vertices $z_j$ are the roots.
    There is exactly one $R_i$-edge for each $i \in [\ell]_0$ spanning the vertices $z_j, j \in X_i,$ and the vertex $x_i$.
    
    Let $\str{A}$ be a base structure.
    We write $\str{A}^\sigma$ for the $L_\sigma$-structure $\str{A} * \Gad(\sigma)$.
    A structure of the form $\str{A}^\sigma$ is called a \emph{fragmented} base structure.
\end{definition}

\missingfigure{Example of fragmentation, e.g. for the $\overrightarrow{C}_3$ example.}

As indicated, we call the $R_i$-edges from a copy of $\Gad(\sigma)$ replacing an $R$-edge $\tpl{e}$ the \emph{subedges} of $\tpl{e}$.
The $R_i$-subedge of $\tpl{e}$ for a class $X_i$ is denoted by $\tpl{e}_{X_i}$.
Conversely, $\tpl{e}$ is the superedge of $\tpl{e}_{X_i}$.
The vertices in $V(\str{A}^\sigma) \setminus V(\str{A})$, i.e. the copies of $x_0, \dots, x_\ell$, are called \emph{auxiliary}.

\begin{remark}
    The sole purpose of the auxiliary vertices is to record the number of subedges.
    An equivalent approach to the definition would be to allow multiedges by using many-sorted logic, where the vertices and edges are considered to be distinct entities in the universe of a structure.
    Then the subedges would be truly defined as a projection of $R$-edges.
    The auxiliary vertices allow us to stay in the usual one-sorted logic, although they admittedly bring their own technical challenges.
\end{remark}

The following theorem, shows that only the information is the structures $\str{A}^\sigma$ is necessary for the behavior of $\str{A}*\str{G}$ provided that the $\sigma$-nonequivalent roots are far apart.

Recall that $\maxarity{\lambda}$ stands for the maximum arity of a symbol from $\lambda$.

\begin{theorem}\label{thm:elementary_equivalence_using_fragmentation}
    Fix $k \in \N$.
    Let $\str{A}_1, \str{A}_2$ be base structures, $\str{G}_1, \str{G}_2$ gadgets and $\sigma$ an equivalence on $[\arity{R}]$ whose maximal class has size $m$.
    Suppose it holds
    \begin{equation*}
        \begin{gathered} 
            \str{A}^\sigma_1 \equiv_{(m+1)k} \str{A}^\sigma_2
            , \\
            \str{G}_1 \equiv_{2^{k+1}\cdot\maxarity{L_G}} \str{G}_2
            , \\
            \forall\; i,j \in [\arity{R}]: \dist_{\str{G}_1}(z_i^{\str{G}_1}, z_j^{\str{G}_1}) \leq 2^{k+1} \Rightarrow (i,j) \in \sigma
            .
        \end{gathered}
    \end{equation*}
    Then we have
    \[
        \str{A}_1 * \str{G}_1 \equiv_k \str{A}_2 * \str{G}_2
        .
    \]
\end{theorem}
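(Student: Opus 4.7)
My plan is to refine the $\EF$-game construction of Lemma~\ref{lem:continuity_bound_via_ef_games}. Duplicator plays $H = \EF_k(\str{A}_1 * \str{G}_1; \str{A}_2 * \str{G}_2)$ by acting as Spoiler in two auxiliary games, $H_\sigma = \EF_{(m+1)k}(\str{A}_1^\sigma; \str{A}_2^\sigma)$ and $H_\str{G} = \EF_{2^{k+1}\maxarity{L_G}}(\str{G}_1; \str{G}_2)$, following her winning strategies there. Internal picks are handled exactly as in Algorithm~\ref{alg:duplicator_strategy_simple}: the move is replayed in $H_\sigma$ and its answer is transported back to $H$.

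The new ingredient is the handling of an external pick $u$ in a copy $\str{G}^\tpl{e}$. Set $u' = \iota_\tpl{e}(u) \in V(\str{G}_1)$. A triangle-inequality argument together with the distance hypothesis shows that all roots of $\str{G}_1$ within distance $2^k$ of $u'$ belong to a common $\sigma$-class $X_i$ (taking $X_i = X_0 = \emptyset$ if no root is that close). Rather than revealing the full $R$-edge $\tpl{e}$ in the base game, as the simple lemma does, Duplicator now reveals only the $R_i$-subedge $\tpl{e}_{X_i}$ in $H_\sigma$: the $|X_i| \leq m$ vertices $\tpl{e}(j)$ for $j \in X_i$, and the auxiliary vertex witnessing that subedge. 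This costs at most $m+1$ picks per round of $H$, fitting exactly into the $(m+1)k$ budget of $H_\sigma$. In parallel she plays $u'$ in $H_\str{G}$ and pulls its response back through $\iota_\tpl{f}^{-1}$ for any $R$-edge $\tpl{f}$ of $\str{A}_2$ extending the subedge produced by $H_\sigma$. Additional exploratory moves in $H_\str{G}$ probe the shrinking $2^{k-t+1}$-neighborhood of the picked vertex in round $t$; summed over $t = 1,\dots,k$, the geometric series together with the factor $\maxarity{L_G}$ (which absorbs entire edges straddling neighborhood boundaries) fits into the $2^{k+1}\maxarity{L_G}$ budget.

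Correctness reduces, as in Lemma~\ref{lem:continuity_bound_via_ef_games}, to checking that every $S$-edge among the picks is preserved: internal-internal cases are covered by $H_\sigma$, intra-gadget cases by $H_\str{G}$, and a mixed edge necessarily involves a root whose index, by the locality argument above, falls into the revealed class $X_i$, ensuring consistency. The main obstacle is to justify that the partial revelation of $\tpl{e}$ is harmless, since the superedge $\tpl{f}$ in $\str{A}_2$ extending $\tpl{e}_{X_i}$ need not be unique. I expect the resolution to exploit the hypothesis $\dist_{\str{G}_1}(z_i^{\str{G}_1}, z_j^{\str{G}_1}) > 2^{k+1}$ for $(i,j) \notin \sigma$: it forces every part of $\str{G}^\tpl{f}$ that Spoiler can reach in the remaining rounds from the response to lie in a single $\sigma$-component of the gadget, so the unrevealed coordinates of $\tpl{f}$ are invisible to a $k$-round play; the deep gadget equivalence $\str{G}_1 \equiv_{2^{k+1}\maxarity{L_G}} \str{G}_2$ then transfers this invisibility across the two sides, making any admissible choice of $\tpl{f}$ equally valid.
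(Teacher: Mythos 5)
Your skeleton matches the paper's strategy (refine Lemma~\ref{lem:continuity_bound_via_ef_games}: on an external pick reveal only the $\sigma$-subedge plus its auxiliary vertex in $\EF_{(m+1)k}(\str{A}_1^\sigma;\str{A}_2^\sigma)$, play the gadget game in parallel, budget $(m+1)$ base-picks per round), but the step you flag as ``the main obstacle'' and then only conjecture a resolution for is precisely the heart of the proof, and your sketched resolution does not do the work. First, a mischaracterization: the superedge $\tpl{f}\in R^{\str{A}_2}$ of the response subedge is in fact \emph{unique}, because the auxiliary vertex pins down the copy of $\Gad(\sigma)$; the genuine danger is different, namely that two external picks landing in the \emph{same} copy $\str{G}_1^{\tpl{e}}$ but assigned to different $\sigma$-classes get routed through \emph{different} superedges of $\str{A}_2$. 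Arguing that ``the unrevealed coordinates of $\tpl{f}$ are invisible to a $k$-round play'' misses the point: the winning condition is that the final map on picked vertices is a partial isomorphism, so what must be shown is that no $S$-edge among picks ever straddles two differently-routed picks. The paper does this with a coloring argument: picks in the gadgets are colored by $\sigma$-classes, the coloring is $2^k$-discrete at the start and stays $2^{k-t}$-discrete after $t$ rounds (Lemma~\ref{lem:about_discrete_colorings}), hence at least $1$-discrete at the end, so every $S$-edge inside a gadget copy is monochromatic and is handled consistently through a single subedge/superedge; mixed edges touching roots are then covered by the base-game partial isomorphism. None of this appears in your proposal.

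Second, you assign the class by looking at roots of $\str{G}_1$ near the picked vertex, but Spoiler may pick in $\str{A}_2*\str{G}_2$, and the distance hypothesis is stated only for $\str{G}_1$; the paper's proof explicitly begins by transferring it to $\str{G}_2$ via $\str{G}_1\equiv_{2^{k+1}\maxarity{L_G}}\str{G}_2$ and Lemma~\ref{lem:measuring_distances}, and its Algorithm~\ref{alg:duplicator_strategy_fragmentation} takes the class from the \emph{Duplicator-side} response and then proves, by induction with distance measurement, that the classes assigned on the two sides agree in every round. This agreement is what makes ``the same subedge'' a meaningful instruction in the base game; your proposal neither states nor proves it. (Relatedly, your ``exploratory probe'' accounting for the $2^{k+1}\maxarity{L_G}$ budget is unnecessary and shakier than the paper's use of Lemma~\ref{lem:measuring_distances}, which needs no actual extra moves: the budget is there so that distances up to $2^{k-t}$ can still be certified with the rounds remaining after $t$ real picks.) So the proposal is an accurate outline of the construction but leaves the correctness argument --- discreteness transfer to $\str{G}_2$, round-by-round color agreement, and the monochromatic-edge consistency claim --- unproved; as written it is a genuine gap rather than a complete proof.
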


We leave the proof of Theorem~\ref{thm:elementary_equivalence_using_fragmentation}, which starts by showing that the second assumption implies the third for $\str{G}_2$, for Section~\ref{sssec:proof_of_fragmentation}.
Now we proceed to the statement about the elementary convergence and limit of the sequence $\strseq{A} * \strseq{G}$.
Already, Theorem~\ref{thm:elementary_equivalence_using_fragmentation} implies that elementarily convergence of sequences $\strseq{A}^\sigma$ and $\strseq{G}$ ensure elementarily convergent sequence $\strseq{A}*\strseq{G}$ (provided that $\sigma = \Eq(\strseq{G})$).
It is rather intuitive that the limit of $\strseq{A}*\strseq{G}$ should be obtained by applying gadget construction to the elementary limits of $\strseq{A}^\sigma$ and $\strseq{G}$.
Strictly speaking, this is not a classical gadget construction as the structure $\ellim \strseq{G}$ is a gadget designed to replace $R$-edges while the structure $\ellim \strseq{A}^\sigma$ is only a fragmented base structure (in particular, it does not contain $R$-edges).
Nevertheless, the intended result is clear: replace each $R_i$-edge by the component of $\ellim \strseq{G}$ that contains the roots from $X_i$ and remove the auxiliary vertices in the process.
In particular, the $R_0$-edges are replaced by the union of components of $\ellim \strseq{G}$ that contain no root.
We denote this modified gadget construction by $*_\sigma$.

\begin{theorem}\label{thm:preservation_of_elementary_convergence_with_fragmentation}
    Let $\strseq{A}$ be a sequence of base structures and $\strseq{G}$ be an elementarily convergent sequence of gadgets.
    Set $\sigma = \Eq(\strseq{G})$.
    If $\strseq{A}^\sigma$ is elementarily convergent, then the sequence $\strseq{A} * \strseq{G}$ is elementarily convergent and we have
    \[
        \ellim (\strseq{A}*\strseq{G}) \equiv (\ellim \strseq{A}^\sigma) *_\sigma (\ellim \strseq{G})
        .
    \]
\end{theorem}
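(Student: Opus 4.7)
The plan is to derive elementary convergence by a direct application of Theorem~\ref{thm:elementary_equivalence_using_fragmentation}, and then to pin down the limit via ultraproducts, leveraging the \Los{}-style identities recorded after Corollary~\ref{cor:preservation_of_elementary_convergence}.

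Fix $k \in \N$ and let $M$ denote the size of the largest class of $\sigma$. First I would verify that the three hypotheses of Theorem~\ref{thm:elementary_equivalence_using_fragmentation} (with parameter $k$) hold for $(\str{A}_m, \str{A}_n, \str{G}_m, \str{G}_n)$ whenever $m, n$ are large enough. The equivalences $\str{A}_m^\sigma \equiv_{(M+1)k} \str{A}_n^\sigma$ and $\str{G}_m \equiv_{2^{k+1} \cdot \maxarity{L_G}} \str{G}_n$ follow from elementary convergence of $\strseq{A}^\sigma$ and of $\strseq{G}$ together with the finite index of $k$-elementary equivalence. For the distance hypothesis, the defining equation of $\sigma = \Eq(\strseq{G})$ gives $\dist_{\str{G}_n}(z_i^{\str{G}_n}, z_j^{\str{G}_n}) \to \infty$ for each of the finitely many pairs $(i,j) \notin \sigma$, so these distances eventually all exceed $2^{k+1}$. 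Theorem~\ref{thm:elementary_equivalence_using_fragmentation} then gives $\str{A}_m * \str{G}_m \equiv_k \str{A}_n * \str{G}_n$ for all large $m, n$, proving elementary convergence of $\strseq{A} * \strseq{G}$.

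For the limit, I would fix a non-principal ultrafilter $\mathcal{U}$ on $\N$ and set $\str{A}_\infty = \prod \strseq{A}/\mathcal{U}$ and $\str{G}_\infty = \prod \strseq{G}/\mathcal{U}$. The \Los{}-based identity, applied to the constant family $\Gad(\sigma)$, gives $\prod \strseq{A}^\sigma / \mathcal{U} \equiv \str{A}_\infty * \Gad(\sigma) = \str{A}_\infty^\sigma$, so $\str{A}_\infty^\sigma$ and $\str{G}_\infty$ are valid representatives of $\ellim \strseq{A}^\sigma$ and $\ellim \strseq{G}$. Repeating the hypothesis verification with $(\str{A}_\infty, \str{G}_\infty)$ playing the role of $(\str{A}_m, \str{G}_m)$---the distance hypothesis for $\str{G}_\infty$ being automatic, since \Los{} transfers $\dist \to \infty$ to an infinite distance in $\str{G}_\infty$---and applying Theorem~\ref{thm:elementary_equivalence_using_fragmentation} yields $\str{A}_\infty * \str{G}_\infty \equiv_k \str{A}_n * \str{G}_n$ for all sufficiently large $n$. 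Thus $\str{A}_\infty * \str{G}_\infty$ represents $\ellim(\strseq{A}*\strseq{G})$.

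It then remains to exhibit the isomorphism $\str{A}_\infty * \str{G}_\infty \cong \str{A}_\infty^\sigma *_\sigma \str{G}_\infty$. By \Los{} applied to the first-order formulas ``$\dist(z_i, z_j) \leq r$'', the roots of $\str{G}_\infty$ are partitioned by its connected components exactly along the classes of $\sigma$. Consequently, for each $R$-edge $\tpl{e} \in R^{\str{A}_\infty}$, the attached copy $\str{G}_\infty^\tpl{e}$ splits into components, each containing either no root or exactly the roots indexed by one class $X_i$; pairing these components with the subedges $\tpl{e}_{X_i}$ of the fragmented structure (with the rootless components matched collectively to $\tpl{e}_{X_0}$) gives a bijection on vertex sets that, by unpacking Definitions~\ref{def:gadget_construction} and~\ref{def:fragmentation}, is a structure isomorphism. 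Combining this with the previous paragraph yields $\ellim(\strseq{A}*\strseq{G}) \equiv \str{A}_\infty^\sigma *_\sigma \str{G}_\infty$, which is $(\ellim \strseq{A}^\sigma) *_\sigma (\ellim \strseq{G})$ for our chosen representatives. The main obstacle I expect is the bookkeeping in this isomorphism step: one must verify consistently across all $R$-edges that the component decomposition of $\str{G}_\infty^\tpl{e}$ aligns with the partition of its roots induced by $\sigma$, and that the auxiliary vertices of $\str{A}_\infty^\sigma$ (removed by $*_\sigma$) leave no spurious traces. The remainder of the argument is a routine invocation of Theorem~\ref{thm:elementary_equivalence_using_fragmentation} together with \Los{}'s theorem.
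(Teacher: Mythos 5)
Your argument is correct, and while the first half (elementary convergence via Theorem~\ref{thm:elementary_equivalence_using_fragmentation}, checking the three hypotheses for large indices and using that the non-$\sigma$-equivalent root distances tend to infinity) coincides with the paper's, your identification of the limit takes a genuinely different route. The paper extracts, by compactness of $(\mathcal{B},\rho_\mathcal{B})$, an elementarily convergent subsequence $\strseq{A}_{\bm{f}}$ of $\strseq{A}$, applies Corollary~\ref{cor:preservation_of_elementary_convergence} to get $\ellim(\strseq{A}_{\bm{f}}*\strseq{G}_{\bm{f}}) \equiv (\ellim\strseq{A}_{\bm{f}})*(\ellim\strseq{G}_{\bm{f}})$, uses the structural isomorphism $(\ellim\strseq{A}_{\bm{f}})*(\ellim\strseq{G}_{\bm{f}}) \cong (\ellim\strseq{A}_{\bm{f}})^\sigma *_\sigma (\ellim\strseq{G}_{\bm{f}})$, and then swaps elementarily equivalent operands under $*_\sigma$ via the continuity theorem. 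You avoid subsequences and compactness altogether by working with the ultraproducts $\str{A}_\infty=\prod\strseq{A}/\mathcal{U}$ and $\str{G}_\infty=\prod\strseq{G}/\mathcal{U}$ of the full (possibly non-convergent) sequence $\strseq{A}$: the \Los{}-type identity recorded after Corollary~\ref{cor:preservation_of_elementary_convergence}, applied to the constant family $\Gad(\sigma)$, identifies $\str{A}_\infty^\sigma$ as a representative of $\ellim\strseq{A}^\sigma$, and a second application of Theorem~\ref{thm:elementary_equivalence_using_fragmentation} (legitimate, since the paper notes that the results of Section~3 extend to structures of arbitrary cardinality, and the distance hypothesis transfers by \Los{}) shows that $\str{A}_\infty*\str{G}_\infty$ represents $\ellim(\strseq{A}*\strseq{G})$. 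The key structural step — that the components of the limit gadget partition the roots exactly along $\sigma$, so that $*$ on the limit agrees with $*_\sigma$ on the fragmented limit — is the same isomorphism the paper asserts in its equation~\eqref{eq:exchange_of_fragmentation_and_gadget_construction}, stated at a comparable level of detail, so no rigor is lost there. What your route buys is a canonical representative and no continuity-based operand swapping; what it costs is reliance on the infinite-structure version of Theorem~\ref{thm:elementary_equivalence_using_fragmentation} and on \Los{}. One small streamlining: once elementary convergence of $\strseq{A}*\strseq{G}$ is established, the recorded identity $\prod(\strseq{A}*\strseq{G})/\mathcal{U} \equiv (\prod\strseq{A}/\mathcal{U})*(\prod\strseq{G}/\mathcal{U})$ already yields that $\str{A}_\infty*\str{G}_\infty$ is an elementary limit, so your second invocation of Theorem~\ref{thm:elementary_equivalence_using_fragmentation} could be dropped.
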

\begin{proof}
    As noted above, a direct application of Theorem~\ref{thm:elementary_equivalence_using_fragmentation} yields that $\strseq{A}*\strseq{G}$ is elementarily convergent.
    In the rest of the proof, we show that the elementary limit can be expressed as $(\ellim \strseq{A}^\sigma) *_\sigma (\ellim \strseq{G})$.
    In particular, we show that $(\ellim \strseq{A}^\sigma) *_\sigma (\ellim \strseq{G})$ is the elementary limit of $\strseq{A}_{\bm{f}}*\strseq{G}_{\bm{f}}$, where $\strseq{A}_{\bm{f}} = (\str{A}_{f(n)})_{n \in \N}$ is an elementarily convergent subsequence of $\strseq{A}$.
    This is sufficient as the sequence $\strseq{A}*\strseq{G}$ is elementarily convergent.

    So, let $\strseq{A}_{\bm{f}}$ be an elementarily convergent subsequence of $\strseq{A}$.
    There is one due to the compactness of the space $(\mathcal{B}, \rho_\mathcal{B})$.
    Corollary~\ref{cor:preservation_of_elementary_convergence} states that
    \begin{equation}\label{eq:commutation_of_gadget_construction_and_limit}
        (\ellim \strseq{A}_{\bm{f}} * \strseq{G}_{\bm{f}}) = (\ellim \strseq{A}_{\bm{f}}) * (\ellim \strseq{G}_{\bm{f}})
        .
    \end{equation}
    The operation $*_\sigma$ is defined in such a way that it holds
    \begin{equation}\label{eq:exchange_of_fragmentation_and_gadget_construction}
              (\ellim \strseq{A}_{\bm{f}}) * (\ellim \strseq{G}_{\bm{f}}) 
        \cong (\ellim \strseq{A}_{\bm{f}})^\sigma *_\sigma (\ellim \strseq{G}_{\bm{f}})
    \end{equation}
    provided we use isomorphic limit structures on both sides (and not just elementarily equivalent).
    Moreover, we have
    \begin{equation}\label{eq:equivalence_from_subsequence_to_sequence}
    \begin{aligned}
                (\ellim \strseq{A}_{\bm{f}})^\sigma *_\sigma (\ellim \strseq{G}_{\bm{f}})
        &\equiv (\ellim \strseq{A}_{\bm{f}}^\sigma) *_\sigma (\ellim \strseq{G}_{\bm{f}}) \\
        &\equiv (\ellim \strseq{A}^\sigma)          *_\sigma (\ellim \strseq{G})
    \end{aligned}
    \end{equation}
    In both equalities, we are interchanging elementarily equivalent structures, which is possible by Theorem~\ref{thm:continuity_of_gadget_construction} as $*_\sigma$ is essentially a repeated use of $*$ (the additional removal of auxiliary vertices from structures on both sides does not harm the elementary equivalence).
    In particular, for the first equality, observe that $(\ellim \strseq{A}_{\bm{f}})^\sigma \equiv \ellim \strseq{A}_{\bm{f}}^\sigma$ due to the definition of fragmented base structures (via gadget construction) and Corollary~\ref{cor:preservation_of_elementary_convergence}.
    In the second one, we utilize that $\strseq{A}_{\bm{f}}^\sigma$ and $\strseq{G}_{\bm{f}}$ are subsequences of convergent sequences $\strseq{A}^\sigma$ and $\strseq{G}$.
    Combining~\eqref{eq:commutation_of_gadget_construction_and_limit},\eqref{eq:exchange_of_fragmentation_and_gadget_construction} and~\eqref{eq:equivalence_from_subsequence_to_sequence}, we reach the conclusion.
\end{proof}

We remark that a similar statement also holds for the ultraproducts.

\subsubsection{Proof of Theorem~\ref{thm:elementary_equivalence_using_fragmentation}}\label{sssec:proof_of_fragmentation}

To a large degree, we follow the lines of the proof of Lemma~\ref{lem:continuity_bound_via_ef_games}.
The main difference is due to fact that the $\EF$-game on $\str{A}_1^\sigma$ and $\str{A}_2^\sigma$ allows to identify only corresponding pairs of subedges, but not of the whole $R$-edges.
We introduce a new mechanism that assigns to a picked external vertex from $\str{G}_S^\tpl{e}$ a subedge of $\tpl{e}$.
Finding the corresponding subedge identifies a copy $\str{G}_D^\tpl{f}$ where we look for the Duplicator's answer.
Beware that subedges of a single $R$-edge $\tpl{e}$ may correspond to subedges of several distinct $R$-edges; we need to ensure that Spoiler is not able to exploit such a discrepancy.

We start by simple lemmas about distances.

\begin{lemma}\label{lem:measuring_distances}
    Let $\str{A}$ and $\str{B}$ be $\lambda$-structures containing vertices $a_1, a_2$, resp. $b_1, b_2$.
    Suppose that Duplicator wins $\EF_k(\str{A}, a_1, a_2; \str{B}, b_1, b_2)$.
    For $r \in \N$ satisfying that $r \cdot \maxarity{\lambda} \leq k$, we have either
    \[
        \dist_\str{A}(a_1, a_2) = \dist_\str{B}(b_1, b_2)
        ,
    \]
    or
    \[
        \dist_\str{A}(a_1, a_2) > r \text{ and } \dist_\str{B}(b_1, b_2) > r
        .
    \]
\end{lemma}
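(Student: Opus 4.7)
The plan is to let Spoiler exploit a short Gaifman path to force Duplicator to mirror it in the other structure. Assume without loss of generality that $\dist_\str{A}(a_1,a_2) = d$ is finite with $d \leq r$; the goal is to show $\dist_\str{B}(b_1,b_2) \leq d$, and then close the loop by symmetry. Fix a shortest Gaifman path $a_1 = v_0, v_1, \ldots, v_d = a_2$ in $\str{A}$. For each $i < d$ there is a symbol $S_i \in \lambda$ and a tuple $\tpl{t}_i \in S_i^\str{A}$ that contains both $v_i$ and $v_{i+1}$, with $\arity{S_i} \leq \maxarity{\lambda}$.

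Spoiler's strategy in $\EF_k(\str{A}, a_1, a_2; \str{B}, b_1, b_2)$ is to pick from $\str{A}$, one by one, every vertex occurring in some $\tpl{t}_i$ that is not yet on the board. This costs at most $d \cdot \maxarity{\lambda} \leq r \cdot \maxarity{\lambda} \leq k$ rounds, so the strategy fits inside the budget. Since Duplicator wins, her responses extend $a_j \mapsto b_j$ to a partial isomorphism of the induced substructures. In particular, for each $i$ the image of $\tpl{t}_i$ must lie in $S_i^\str{B}$, so the images $w_0 = b_1, w_1, \ldots, w_d = b_2$ of the path vertices are such that every consecutive pair $w_i, w_{i+1}$ is covered by a common tuple of $\str{B}$, i.e.\ adjacent in its Gaifman graph. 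Hence $\dist_\str{B}(b_1,b_2) \leq d$.

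Since Duplicator wins the game regardless of which structure Spoiler selects from, the same argument applies verbatim with the roles of $\str{A}$ and $\str{B}$ swapped, starting now from the distance $\dist_\str{B}(b_1,b_2) \leq d \leq r$. This yields $\dist_\str{A}(a_1,a_2) \leq \dist_\str{B}(b_1,b_2)$, and combined with the previous inequality gives equality of the two distances. If instead both distances exceed $r$, the second alternative of the lemma holds trivially. The only subtlety worth attending to is the move count: one must confirm that selecting all coordinates of the $d$ witnessing tuples never exceeds $r \cdot \maxarity{\lambda}$ rounds, which is exactly the hypothesis $r \cdot \maxarity{\lambda} \leq k$; everything else is a standard back-and-forth argument.
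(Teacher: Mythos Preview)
Your proof is correct and follows essentially the same approach as the paper: Spoiler pebbles all vertices of the tuples witnessing a shortest Gaifman path of length $d \leq r$, the move budget $d \cdot \maxarity{\lambda} \leq k$ suffices, Duplicator's responses must form a path in the other structure, and equality follows by symmetry. The only cosmetic difference is that you spell out the symmetry step slightly more explicitly than the paper does.
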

\begin{proof}
    Suppose that $d = \dist_\str{A}(a_1, a_2) \leq r$.
    Then there is a path $\tpl{e}_1, \dots, \tpl{e}_d$ connecting $a_1, a_2$ in $\str{A}$ (i.e. $\tpl{e}_i$ is an $S_i$-edge for some $S_i \in \lambda$, $a_1 \in \tpl{e}_1, a_2 \in \tpl{e}_d$ and each $\tpl{e}_i, \tpl{e}_{i+1}$ share at least one vertex).
    Spoiler have enough rounds to pick all the vertices of edges $\tpl{e}_1, \dots, \tpl{e}_d$.
    Since Duplicator has a winning strategy, there is a path $\tpl{f}_1, \dots, \tpl{f}_d$ in $\str{B}$ connecting $b_1, b_2$.
    Therefore, $\dist_\str{A}(b_1, b_2) \leq d$ and the converse inequality follows by the symmetric argument.
\end{proof}

When the assumptions of the lemma arise, we say that we can \emph{measure distances up to $r$} in the given game.

Fix $r \in \N$ and let $\str{A}$ be a $\lambda$-structure with $M \subseteq V(\str{A})$ and a coloring $c: M \to [n]$.
If for each $u,v \in M$ with $c(u) \not= c(v)$ holds that $\dist_\str{A}(u,v) > r$, we say that the coloring $c$ is \emph{$r$-discrete}.

\begin{lemma}\label{lem:about_discrete_colorings}
    Let $\str{A}$ be a $\lambda$-structure with a $2r$-discrete coloring $c$ on $M \subseteq V(\str{A})$.
    Suppose we color a vertex $v \in V(\str{A}) \setminus M$ by the following rule:
    if there is $u \in M$ with $\dist_\str{A}(v,u) \leq r$, set $c(v) = c(u)$.
    Otherwise, $v$ gets an arbitrary color.
    Then, the resulting coloring on $M \cup \{v\}$ is $r$-discrete.
\end{lemma}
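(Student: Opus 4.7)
The plan is to verify the $r$-discreteness of the extended coloring by checking each pair of distinctly-colored vertices in $M \cup \{v\}$. Pairs entirely inside $M$ are handled for free: by hypothesis $c$ is $2r$-discrete on $M$, so any such pair is already at distance greater than $2r \geq r$. Hence the only work is in pairs of the form $\{v, w\}$ with $w \in M$ and $c(v) \neq c(w)$.

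First, I would quickly note that the coloring rule is unambiguous: if there were two vertices $u_1, u_2 \in M$ both within distance $r$ from $v$, the triangle inequality would give $\dist_\str{A}(u_1, u_2) \leq 2r$, and the $2r$-discreteness of $c$ on $M$ would force $c(u_1) = c(u_2)$. So $c(v)$ is well-defined regardless of which witness $u$ is used.

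Now I would split the analysis into the two cases from the rule. In the first case, $v$ has no $M$-neighbor within distance $r$, so by definition $\dist_\str{A}(v, w) > r$ for every $w \in M$, and there is nothing more to check. In the second case, $c(v) = c(u)$ for some $u \in M$ with $\dist_\str{A}(v, u) \leq r$. Pick any $w \in M$ with $c(w) \neq c(v)$; then $c(w) \neq c(u)$, so $2r$-discreteness of $c$ on $M$ yields $\dist_\str{A}(u, w) > 2r$. The triangle inequality then gives
\[
    \dist_\str{A}(v, w) \;\geq\; \dist_\str{A}(u, w) - \dist_\str{A}(v, u) \;>\; 2r - r \;=\; r,
\]
which is exactly what $r$-discreteness requires.

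There is no real obstacle here; the lemma is essentially a triangle inequality with a bookkeeping step for well-definedness of the extended color, and the slack between $2r$ (discreteness on $M$) and $r$ (radius of the extension rule) is precisely tuned so that the single application of the triangle inequality closes the argument.
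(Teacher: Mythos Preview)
Your proof is correct and follows exactly the approach the paper intends: the paper's own proof is the single line ``Directly follows from the triangle inequality for $\dist_\str{A}(\cdot,\cdot)$,'' and you have simply unpacked that (with a harmless extra remark on well-definedness of the extended color).
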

\begin{proof}
    Directly follows from the triangle inequality for $\dist_\str{A}(\cdot, \cdot)$.
\end{proof}

Now we are ready to give the main proof of this section.

\begin{proof}[Proof of Theorem~\ref{thm:elementary_equivalence_using_fragmentation}]
    Set $H = \EF_k(\str{A}_1*\str{G}_1; \str{A}_2*\str{G}_2)$, $H_{\str{A}^\sigma} = \EF_{(m+1)k}(\str{A}^\sigma_1;\str{A}^\sigma_2)$ and $H_\str{G} = \EF_{2^{k+1}\cdot\maxarity{L_G}}(\str{G}_1;\str{G}_2)$.
    We use Lemma~\ref{lem:about_discrete_colorings} to color the picked vertices in $\str{G}_1$ and $\str{G}_2$ by equivalence classes of $\sigma$; in particular, if a vertex is allowed to get an arbitrary color, we use the color $X_0$.
    The lemma is applied independently for vertices from $\str{G}_1$ and $\str{G}_2$, however, we will prove that the colors assigned to both vertices picked in a single round are the same.  
    Initially, we assign to each root $z_i$ the color $X_j$ for which $i \in X_j$.
    Note that this initial coloring $c_1$ of $\str{G}_1$ is $2^k$-discrete due to the last assumption of the theorem.
    The second assumption together with Lemma~\ref{lem:measuring_distances} implies the same for the coloring $c_2$ of $\str{G}_2$.
    We argue that Algorithm~\ref{alg:duplicator_strategy_fragmentation} poses a winning strategy for Duplicator.
    
\begin{algorithm}
    \caption{Duplicator's response in one round of $H$}
    \label{alg:duplicator_strategy_fragmentation}
    \begin{algorithmic}[1]
        \State $S, D \gets$ indices of the Spoiler's and Duplicator's structure in this round
        \State $u \gets$ vertex chosen by Spoiler from $\str{A}_S * \str{G}_S$ 
        \If{$u$ is internal}
            \State Let Spoiler pick $\str{A}_S^\sigma$ and the vertex $u$ in $H_{\str{A}^\sigma}$
            \State $v \gets$ Duplicator's response in $H_{\str{A}^\sigma}$
        \Else
            \State $\tpl{e} \gets \rho(u)$
            \State Let Spoiler pick $\str{G}_S$ and the vertex $\iota_\tpl{e}(u)$ in $H_\str{G}$
                \label{step:spoiler_selects_in_HG}
            \State $v' \gets$ Duplicator's response in $H_\str{G}$
                \label{step:duplicator_selects_in_HG}
            \State $X \gets c_D(v')$ (the color assigned to $v'$ in $\str{G}_D$)
            \State Let Spoiler pick $\str{A}_S^\sigma$ and all the vertices of $\tpl{e}_X$ in $H_{\str{A}^\sigma}$
            \State $\tpl{f}' \gets$ Duplicator's response in $H_{\str{A}^\sigma}$
            \State $\tpl{f} \gets$ the superedge of $\tpl{f}'$ from $R^{\str{A}_D}$
            \State $v \gets \iota_{\tpl{f}}^{-1}(v')$
        \EndIf
        \State Vertex $v$ is the Duplicator's response
    \end{algorithmic}
\end{algorithm}

Most of the reasoning the same as in the proof of Lemma~\ref{lem:continuity_bound_via_ef_games}.
First of all, we exceed the length of neither $H_{\str{A}^\sigma}$ nor $H_\str{G}$.
Let $\tpl{a}$ and $\tpl{b}$ be the $t$-tuples chosen from $\str{A}_1*\str{G}_1$ and $\str{A}_2*\str{G}_2$ after $t$ rounds and $\tpl{u}$ an $S$-edge in $\str{A}_1*\str{G}_1$ with $u_j = a_{i_j}$ for all $j \in [s]$ for some indices $i_1, \dots, i_s \in [t]$ (where $s = \arity{S}$).
We prove that $\tpl{v}$ with $v_j = b_{i_j}$ is an $S$-edge in $\str{A}_2*\str{G}_2$.
If $\tpl{u}$ originated from $\str{A}_1$, the same argument as in Lemma~\ref{lem:continuity_bound_via_ef_games} applies.

We consider the case when $\tpl{u}$ originated from $\str{G}_1$, which needs to be handled more carefully.
Suppose that the $S$-edge arrived within a copy $\str{G}_1^\tpl{e}$.
First, we observe that the $S$-edge $\iota_\tpl{e}(\tpl{u})$ in $\str{G}_1$ is monochromatic, i.e. all vertices were assigned the same color in the game $H_\str{G}$.
This follows from Lemma~\ref{lem:about_discrete_colorings}: the colors were initially $2^k$-discrete and $2^{k-t}$-discrete after $t \leq k$ rounds; thus, they are at least $1$-discrete, which implies that vertices of distinct colors cannot share an edge.
Moreover, we claim that the color assigned to the vertices from Steps~\ref{step:spoiler_selects_in_HG}~and~\ref{step:duplicator_selects_in_HG} is the same.
This is proved by induction using Lemma~\ref{lem:measuring_distances}.
Initially, the colors of corresponding roots are the same.
In the $i$-th round of the game $H_\str{G}$, we can measure distances at least up to $2^{k-i}$.
Hence, if the picked vertex, say $w_i$ from $\str{G}_S$ gets color $X$ as being close (in distance at most $2^{k-i}$) to a vertex $w_j$ picked in $j$-th round, then Duplicator is obliged, by Lemma~\ref{lem:measuring_distances}, to pick a vertex $w'_i$ with $\dist_{\str{G}_1}(w_i, w_j) = \dist_{\str{G}_2}(w'_i, w'_j)$, where $w'_j$ is the vertex picked in the $j$-th round from $\str{G}_D$.
As a result, $c_S(w_i) = c_D(w'_i)$.
If $w_i$ was far from all colored vertices, then so does $w'_i$; hence, $c_S(w_i) = c_D(w'_i) = X_0$.

Let $\beta: V(\str{A}_1^\sigma) \to V(\str{A}_2^\sigma)$ and $\gamma: V(\str{G}_1) \to V(\str{G}_2)$ be the partial isomorphisms from games $H_{\str{A}^\sigma}$ and $H_\str{G}$.
Using the observation above, we deduce that all the vertices $\gamma(\iota_\tpl{e}(\tpl{u}))$ have the same color as the vertices $\iota_\tpl{e}(\tpl{u})$.
It follows that $\gamma(\iota_\tpl{e}(\tpl{u})) = \iota_\tpl{f}(\tpl{v})$, where $\tpl{f} \in R^{\str{A}_2}$ satisfies $\beta(\tpl{e}_X) = \tpl{f}_X$.
Therefore, the tuple $\iota_\tpl{f}(\tpl{v})$ and consequently the tuple $\tpl{v}$ form an $S$-edge in $\str{G}_2$ and $\str{A}_2 * \str{G}_2$, respectively, which concludes the proof.
\end{proof}
\section{Obstacles to local convergence}\label{sec:obstacles}

Here we demonstrate that local convergence needs not to be preserved by gadget construction and show some general reasons why: fluctuating proportion of internal and external vertices and magnification of zero-measure differences for $R$-edges.
Moreover, we give a particular example where local convergence of a sequence of graphs is broken by subdividing each edge by one vertex, which is a simple case of gadget construction with a constant gadget.
We view this section as useful preparation for the following one, where we discuss sufficient conditions for obtaining local convergence.

The structures constructed in the examples below are undirected, see remark in Section~\ref{ssec:gadget_construction}.
Note that although we focus on local convergence, all the sequences $\strseq{A}$ and $\strseq{G}$ bellow are also elementarily convergent.

\subsection{Fluctuating proportion of internal and external vertices}\label{ssec:fluctuating_proportion_of_internal_and_external_vertices}

One obstacle for local convergence is the fluctuating proportion of internal and external vertices in the sequence $\strseq{A}*\strseq{G}$.
In general, the patterns that appear in $\strseq{A}$ and $\strseq{G}$ may differ.
Thus, if the proportion of internal and external vertices fluctuates, it is likely that the sequence $\strseq{A}*\strseq{G}$ is not local convergent as the probability of observing a certain pattern varies.
Such examples with fluctuating proportion are easy to construct: we consider sequences $\strseq{A}$ and $\strseq{G}$ with $|V(\str{A}_n)| \ll |V(\str{G}_n)|$ for odd $n$, and $|V(\str{A}_n)| \gg |V(\str{G}_n)|$ for even $n$.

\begin{example}\label{ex:basic_fluctuation}
    Let $R$ be a unary symbol.
    Consider the following sequence of base graphs:
    \begin{align*}
        \str{A}_n =
        \begin{cases}
            K_n\text{ with an arbitrary vertex marked by $R$} & \text{if $n$ is odd}
            , \\
            K_{2^n}\text{ with an arbitrary vertex marked by $R$} & \text{if $n$ is even}
            .
        \end{cases}
    \end{align*}
    The sequence of gadgets is defined similarly.
    Let $S_n$ be the star with $n$ leaves.
    \begin{align*}
        \str{G}_n =
        \begin{cases}
            S_{2^n}\text{ with the inner vertex as the root} & \text{if $n$ is odd}
            , \\
            S_n\text{ with the inner vertex as the root} & \text{if $n$ is even}
            .
        \end{cases}
    \end{align*}
    Both sequences are local convergent as asymptotically almost all $p$-tuples are the same, i.e. exchangeable by an automorphism.
    However, the sequence $\strseq{A}*\strseq{G}$ is not $\FOloc{1}$-convergent, which is witnessed by the formula $\phi(x)$ stating ``the degree of $x$ is $1$''.
\end{example}

This obstacle may also cause the fail of local convergence in a more subtle context.
In the following example, we consider the operation of $1$-subdivision of edges.
Note that it is a special case of gadget construction with the gadget formed by a path of length $2$ with the endpoints as the roots.

The $(k,\ell)$-lollipop graph $L_{k,\ell}$ is the graph composed of a clique on $k$ vertices and a path of length $\ell$ that share a single vertex, an endpoint of the path.

\begin{example}\label{ex:counterexample_subdivision}
    We define $\strseq{A}$ as the following sequence of lollipop graphs:
    \begin{align*}
        \str{A}_n =
        \begin{cases}
            L_{n, n^3} & \text{if $n$ is odd}, \\
            L_{n, n^{3/2}} & \text{if $n$ is even}.
        \end{cases}
    \end{align*}
    The sequence $\strseq{A}$ is local convergent:
    the $r$-neighborhood of $p$ uniformly chosen vertices is asymptotically almost surely a disjoint collection of paths.
    
    We claim that $\FOloc{1}$-convergence fails for the sequence $\strseq{A}^\bullet$ of $1$-subdivisions of $\strseq{A}$:
    for odd $n$ the path still dominates in the graphs $\str{A}^\bullet_n$ while for even $n$ dominates the subdivided clique.
    In particular, there is $\Theta(n^2)$ external vertices within the clique and only $\Theta(n^{3/2})$ of all the other vertices.
    Thus, we use the formula $\phi(x)$ stating ``$x$ has exactly two neighbors of degree $2$'' as a witness that the sequence $\strseq{A}^\bullet$ is not $\FOloc{1}$-convergent.
\end{example}

\subsection{Magnification of zero-measure differences}\label{ssec:magnification_of_zero_measure_differences}

A more intriguing obstacle is the magnification of zero measure differences of $R$-edges.
Suppose that the $R$-edges are sparse in the sequence $\strseq{A}$, i.e. $\lim \stonepar{R}{\str{A}_n} = 0$, where the symbol $R$ is considered as an atomic formula.
Even if $\strseq{A}$ is $\FO$-convergent, it is possible that the behavior of $R$-edges is far from stable.
That is, the probabilities
\[
    \Pr[\strseq{A} \models \phi(\tpl{x}) \;|\; \strseq{A} \models R(\tpl{x})]
\]
need not to converge (note that the condition has probability $0$).
However, when applying gadget construction, such discrepancies may be magnified and become of a non-zero measure.

\begin{example}\label{ex:magnification}
    Let $R$ be a unary symbol and suppose that $L$ contains a unary symbol $S$.
    We denote by $I_n$ the independent set on $n$ vertices and by $I_n^R$, resp. $I_n^{R,S}$, we indicate that the vertices of $I_n$ are marked by $R$, resp. by both $R$ and $S$.
    Consider the following sequence of base graphs:
    \begin{align*}
        \str{A}_n =
        \begin{cases}
            K_{n^2} \oplus I_n^R \oplus I_{2n}^{R,S}
            & \text{if $n$ is odd}
            , \\
            K_{n^2} \oplus I_{2n}^R \oplus I_n^{R,S}
            & \text{if $n$ is even}
            ,
        \end{cases}
    \end{align*}
    where $\oplus$ stands for the disjoint union.
    The gadget $\str{G}_n$ is the star $S_{2^n}$ with the inner vertex as the root.
    The sequence $\strseq{A}$ is local convergent by a similar argument as above.
    The external vertices dominate in the sequence $\strseq{A}*\strseq{G}$, which is again not $\FOloc{1}$-convergent.
    As the witness, we use the formula $\phi(x)$ stating ``$x$ has a neighbor marked by $S$''.
    
    Note that the same example works with $I_1^R, I_2^{R,S}$ and $I_2^R, I_1^{R,S}$, but such a sequence is not elementarily convergent.
\end{example}

This obstacle does not occur when the $R$-edges are dense in $\strseq{A}$, because the probability of the condition is positive and the conditional probabilities converge.
Moreover, we avoid the obstacle if the sequence $\strseq{A}$ is elementarily convergent and the number of $R$-edges in $\strseq{A}$ is bounded, which follows from the result in \cite{rooting_algebraic_vertices}.

\section{Positive cases of local convergence}\label{sec:positive_cases}

In this section, we study sufficient conditions for local convergence of the sequence $\strseq{A}*\strseq{G}$.
We start by showing that it is enough to avoid the obstacles from the previous section to obtain the convergence.
Then we give another sufficient condition that exploits the locality of the first-order logic.
These approaches are combined in the last part, using the idea of fragmentation from Section~\ref{ssec:fragmentation}.

\subsection{Avoiding obstacles}\label{ssec:avoiding_obstacles}

Here we establish the local convergence of $\strseq{A}*\strseq{G}$ provided that the known obstacles do not occur.
In order to draw the convergent behavior from sequences $\strseq{A}$ and $\strseq{G}$ to $\strseq{A}*\strseq{G}$, we define \emph{representation equivalence} that captures the local behavior of a $p$-tuple $\tpl{a}$ from $\str{A}_n * \str{G}_n$ using the representation of $\tpl{a}$ in the structures $\str{A}_n$ and $\str{G}_n$.
Given the absence of obstacles, the probability that a uniformly selected $p$-tuple belongs to a fixed class $\mathcal{C}$ of representation equivalence converges.
This, as we show, implies that the sequence $\strseq{A}*\strseq{G}$ is local convergent.

Let us define the notion of representation equivalence.
We actually consider a parameterized form: $(k,r,p)$-representation equivalence.
Loosely speaking, two $p$-tuples from $\str{A}_1*\str{G}_1$ and $\str{A}_2*\str{G}_2$ are $(k,r,p)$-representation equivalent if the $r$-neighborhoods of their representation in the structures $\str{A}_1, \str{G}_1$ and $\str{A}_2, \str{G}_2$ are $f(k,r,p)$-elementarily equivalent for some fixed function $f: \N^3 \to \N$.

The definition proceeds in several steps.

\begin{definition}[Profile]
    Let $\tpl{a}$ be a $p$-tuple from $\str{A}*\str{G}$.
    The profile of $\tpl{a}$ is an ordered partition $(I, E_1, \dots, E_t)$ of $[p]$ such that
    \begin{align*}
        I                   &= \{i : a_i\text{ is internal}\}
        , \\
        \bigcup_{j=1}^t E_j &= \{i : a_i\text{ is external}\}
        .
    \end{align*}
    Two indices $i,i'$ of external vertices $a_i, a_{i'}$ share a set $E_j$ if and only if $\rho(a_i) = \rho(a_{i'})$.
    The set $I$ is possibly empty while we require each $E_j$ being non-empty.
    The sets $E_1, \dots, E_t$ are listed by the ascending order of their minimal elements.
\end{definition}

We recall that an internal vertex $a$ from $\str{A}*\str{G}$ is represented in $\str{A}$ by itself.
An external vertex $a$ is represented by the $R$-edge $\tpl{e} = \rho(a)$ from $\str{A}$ and the vertex $\iota_{\tpl{e}}(a)$ from $\str{G}$.

Also recall that $(\str{A},a)$ denotes the structure $\str{A}$ rooted at $a$ and $\str{A}^r$ stands for the substructure of $\str{A}$ induced by the $r$-neighborhood of roots of $\str{A}$.

\begin{definition}[Representation]
    Let $\tpl{a}$ be a $p$-tuple from $\str{A}*\str{G}$ with the profile $(I,E_1, \dots, E_t)$.
    We define $\str{A}(\tpl{a}, r)$ to be the structure $(\str{A}, \tpl{b}_1, \dots, \tpl{b}_p)^r$, where
    \begin{align*}
        \tpl{b}_i = 
        \begin{cases}
            a_i                         & \text{if $a_i$ is internal}
            , \\
            \rho(a_i) & \text{if $a_i$ is external}
            .
        \end{cases}
    \end{align*}
    Moreover, we define for each $j \in [t]$ the structure $\str{G}^j(\tpl{a}, r)$ to be $(\str{G}, \tpl{c}_j)^r$, where $\tpl{c}_j$ is the tuple of vertices $\iota_{\rho(a_i)}(a_i)$ with $i \in E_j$.
\end{definition}

\begin{definition}[Representation equivalence]
    Let $\tpl{a}_1$ and $\tpl{a}_2$ be $p$-tuples from $\str{A}_1*\str{G}_1$ and $\str{A}_2*\str{G}_2$ with the same profile.
    We say that $\tpl{a}_1$ and $\tpl{a}_2$ are \emph{$(k,r,p)$-representation equivalent} if the following conditions hold:
    \begin{align*}
        \str{A}_1(\tpl{a}_1, r) &\equiv_{f(k,r,p)} \str{A}_2(\tpl{a}_2, r)
        , \\
        \forall j \in [t] : \str{G}_1^j(\tpl{a}_1, r) &\equiv_{f(k,r,p)} \str{G}_2^j(\tpl{a}_2, r)
        , \\
        \str{G}_1^r &\equiv_{f(k,r,p)} \str{G}_2^r
        ,
    \end{align*}
    where
    \[
        f(k,r,p) = (\arity{R})^{p+1}(r \cdot \maxarity{\lambda} + k)
        .
    \]
    In such a case, we write $(\str{A}_1*\str{G}_1, \tpl{a}_1) \approx_k^r (\str{A}_2*\str{G}_2, \tpl{a}_2)$.
\end{definition}

Observe that if $t \geq 1$, the last condition, which is necessary in general, follows from the previous one.
Also note that $\approx_k^r$ is an equivalence of finite index as it is based on $\equiv_{f(k,r,p)}$, which has finite index.

Let $\stonepar{\phi|_\pi}{\str{A}}$ denote the probability that a uniformly selected sequence (of tuples) $\tpl{b}_1, \dots, \tpl{b}_p$ from $\str{A}$ satisfies $\phi$, given that $(\str{A}, \tpl{b}_1, \dots, \tpl{b}_p)$ is a representation of some $p$-tuple $\tpl{a}$ from $\str{A}*\str{G}$ with the profile $\pi$.
That is, the probability
\[
    \Pr
    \left[
        \str{A} \models \phi(\tpl{b}_1, \dots, \tpl{b}_p) 
        \;\middle|\;
            i,i'\not\in I \Rightarrow             
            \big(
                \tpl{b}_i \in R^\str{A} 
                \text{ and } 
                \tpl{b}_i = \tpl{b}_{i'} \Leftrightarrow \exists j : i,i' \in E_j
            \big)
    \right]
    .
\]
We say that a profile $\pi$ is \emph{trivial} with respect to a sequence $\strseq{A}*\strseq{G}$ if the probability that a random $p$-tuple $\tpl{a}_n$ from $\str{A}_n*\str{G}_n$ has the profile $\pi$ tends to $0$.

The representation equivalence is key to obtain the following general theorem, whose proof we leave for Section~\ref{sssec:proof_of_general_local_convergence}.

\begin{theorem}\label{thm:general_local_convergence}
    Fix $p \in \N$.
    Let $\strseq{A}$ be a sequence of base structures and $\strseq{G}$ be a sequence of gadgets satisfying
    \begin{enumerate}[(i)]
        \item for every profile $\pi = (I,E_1, \dots, E_t)$ of a $p$-tuple that is non-trivial w.r.t. $\strseq{A}*\strseq{G}$ holds that for each $\phi \in \FOloc{|I|+(p-|I|)\arity{R}}$ the sequence $\stonepar{\phi|_\pi}{\strseq{A}}$ converges,
        \item $\strseq{G}$ is an $\FOcloc{m}$-convergent sequence of gadgets, where
        \[
            m = \max \{ |E_1| : \pi = (I,E_1, \dots, E_t) \text{ is non-trivial w.r.t. } \strseq{A}*\strseq{G} \}
            ,
        \]
        \item the proportion of internal vertices in $\strseq{A}*\strseq{G}$ tends to a limit.
    \end{enumerate}
    Then the sequence $\strseq{A}*\strseq{G}$ is $\FOloc{p}$-convergent.
\end{theorem}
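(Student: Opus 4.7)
The plan is to use the representation equivalence $\approx_k^r$ as the bridge between convergence of $\strseq{A}, \strseq{G}$ and convergence of $\strseq{A}*\strseq{G}$. The first step is a locality analogue of Lemma~\ref{lem:continuity_bound_via_ef_games}: if $(\str{A}_1*\str{G}_1, \tpl{a}_1) \approx_k^r (\str{A}_2*\str{G}_2, \tpl{a}_2)$, then $\str{A}_1*\str{G}_1 \models \phi(\tpl{a}_1) \Leftrightarrow \str{A}_2*\str{G}_2 \models \phi(\tpl{a}_2)$ for every $r$-local $\phi$ of quantifier rank at most $k$. I would prove this by an Ehrenfeucht-Fra\"{i}ss\'{e} argument following the pattern of Algorithm~\ref{alg:duplicator_strategy_simple}, but restricted so that Duplicator responds only within the $r$-neighborhood of the representatives; the three structures appearing in the definition of $\approx_k^r$ supply the three auxiliary games, and the inflated budget $f(k,r,p) = (\arity{R})^{p+1}(r\cdot\maxarity{\lambda}+k)$ is calibrated so that Spoiler can always afford to commit whole $R$-edges while Lemma~\ref{lem:measuring_distances} still lets Duplicator preserve distances up to $r$ in both $\str{A}$ and $\str{G}$.

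Fix now $\phi \in \FOloc{p}$ of quantifier rank $k$. Since $\approx_k^r$ has finite index and $\phi$-satisfaction is $\approx_k^r$-invariant,
\[
    \stonepar{\phi}{\str{A}_n*\str{G}_n} = \sum_{\mathcal{C}} \mathbf{1}[\mathcal{C}\models\phi]\cdot \Pr[\tpl{a}_n\in\mathcal{C}],
\]
where $\tpl{a}_n$ is uniform in $V(\str{A}_n*\str{G}_n)^p$ and $\mathcal{C}$ ranges over the finitely many classes of $\approx_k^r$. Classes inside trivial profiles contribute probability tending to zero, so it suffices to show convergence of $\Pr[\tpl{a}_n\in\mathcal{C}]$ for each $\mathcal{C}$ lying in some non-trivial profile $\pi = (I,E_1,\dots,E_t)$.

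For such a $\mathcal{C}$ I would decompose the sampling of $\tpl{a}_n$ conditional on profile $\pi$ into three asymptotically independent stages: pick $|I|$ internal vertices in $\str{A}_n$, pick $t$ distinct $R$-edges of $\str{A}_n$ (one per external group), and, for each group $E_j$, pick a $|E_j|$-tuple of non-root vertices of $\str{G}_n$ (external vertices of $E_j$ all lie in the single copy of $\str{G}_n$ attached to the $j$-th edge, whose non-root positions are in canonical bijection with non-root vertices of $\str{G}_n$). Membership in $\mathcal{C}$ then factors into an $\str{A}$-constraint and one $\str{G}$-constraint per external group. The $\str{A}$-constraint is definable by a local formula $\psi_\mathcal{C}^\str{A} \in \FOloc{|I|+t\cdot\arity{R}}$ encoding the $f(k,r,p)$-elementary type of the $r$-neighborhood of the sampled internal vertices and $R$-edges, so its conditional probability equals $\stonepar{\psi_\mathcal{C}^\str{A}|_\pi}{\str{A}_n}$, convergent by~(i). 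Each $\str{G}$-constraint is the probability that a uniform non-root $|E_j|$-tuple realizes a fixed $f(k,r,p)$-elementary type relative to the roots of $\str{G}_n$; since non-rootness is a constant-local condition, this is a ratio of Stone pairings of formulas from $\FOcloc{|E_j|}$ with $|E_j|\le m$, convergent by~(ii). Combined with the limit probability of profile $\pi$ itself, which is determined by~(iii) together with the limit $R$-edge density extracted from~(i), the product yields convergence of $\Pr[\tpl{a}_n\in\mathcal{C}]$.

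The main obstacle, as I see it, is the bookkeeping in this last stage: carefully verifying that the three sampling stages are genuinely asymptotically independent after conditioning on $\pi$, that the $\str{A}$-part of $\mathcal{C}$-membership is definable by a \emph{single} local formula independent of the $\str{G}$-choices, and that the limit probability of every non-trivial profile actually exists and is expressible through~(i) and~(iii). None of these are individually subtle, but they must be coordinated with the quantifier-rank accounting hidden in $f(k,r,p)$ so that the $r$-locality of $\phi$ transfers genuinely to locality of all the auxiliary formulas Stone-paired on $\strseq{A}$ and $\strseq{G}$.
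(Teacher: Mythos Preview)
Your proposal is correct and follows essentially the same architecture as the paper: prove that $\approx_k^r$ refines $\equiv_k$ on $r$-neighborhoods (Lemma~\ref{lem:representation_equivalence_refines_elementary_equivalence}), factor the probability of each $\approx_k^r$-class into an $\str{A}$-part and $\str{G}$-parts (Lemma~\ref{lem:representation_statistics_determines_statistics}), and conclude by summing over the finitely many classes. The one methodological difference is in the refinement step: rather than running a fresh restricted $\EF$-game as you outline, the paper obtains it by \emph{iteratively} applying Lemma~\ref{lem:continuity_bound_via_ef_games}---replacing the $t$ marked $R$-edges one by one with the gadgets $\str{G}^j(\tpl{a},r)$ and then all remaining $R$-edges with $\str{G}^r$---followed by a single cut-down to $r$-neighborhoods via Lemma~\ref{lem:restriction_to_neighborhoods}; this modular route is what makes the specific shape $f(k,r,p)=(\arity{R})^{p+1}(r\cdot\maxarity{\lambda}+k)$ transparent (one factor of $\arity{R}$ per replacement, at most $p+1$ replacements, and the additive term for the neighborhood restriction).
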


We specialize the statement into several theorems with more natural assumptions.

\begin{theorem}\label{thm:fo_finitely_many_r_edges}
    Let $\strseq{A}$ be an $\FO$-convergent sequence of base structures and $\strseq{G}$ be an $\FOcloc{}$-convergent sequence of gadgets satisfying
    \begin{enumerate}[(i)]
         \item $\lim |R^\strseq{A}| = r < \infty$,
         \item the sequence $|V(\strseq{A})|\:/\:|V(\strseq{G})|$ has a limit.
    \end{enumerate}
    Then the sequence $\strseq{A}*\strseq{G}$ is local convergent.
\end{theorem}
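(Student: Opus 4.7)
The strategy is to apply Theorem~\ref{thm:general_local_convergence} by verifying its three hypotheses. Conditions (ii) and (iii) are immediate. For (ii), $\FOcloc{}$-convergence of $\strseq{G}$ yields $\FOcloc{m}$-convergence for every $m \in \N$. For (iii), since $|R^{\str{A}_n}|$ is an integer sequence with finite limit $r$, it is eventually constant equal to $r$; thus for large $n$ we have $|V(\str{A}_n * \str{G}_n)| = |V(\str{A}_n)| + r(|V(\str{G}_n)| - \arity{R})$, and the assumption that $|V(\strseq{A})|/|V(\strseq{G})|$ has a limit then forces the internal-vertex proportion to converge.

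The substance lies in condition (i). Fix a profile $\pi = (I, E_1, \ldots, E_t)$ that is non-trivial with respect to $\strseq{A}*\strseq{G}$ and a formula $\phi \in \FOloc{|I| + (p-|I|)\arity{R}}$. For large $n$, the conditional Stone pairing $\stonepar{\phi|_\pi}{\str{A}_n}$ is a uniform average, over the $r!/(r-t)!$ ordered $t$-tuples of distinct elements of $R^{\str{A}_n}$, of the Stone pairing in $\str{A}_n$ of the formula obtained from $\phi$ by substituting these $R$-edges for its external positions, leaving $|I|$ free variables for the internal positions. The difficulty is that the $R$-edges carry no canonical names across varying $n$, so convergence of the inner Stone pairings does not follow directly from $\FO$-convergence of $\strseq{A}$; note that the corresponding $\FO$-Stone pairing decays to zero at an uncontrolled rate of order $|V(\str{A}_n)|^{-t\arity{R}}$.

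To overcome this, we invoke the result of~\cite{rooting_algebraic_vertices} on preservation of $\FO$-convergence under rooting at a finite algebraic set of vertices. Since the $r$ $R$-edges involve at most $r \cdot \arity{R}$ vertices, this set is finite and hence algebraic in $\strseq{A}$; after passing to a subsequence to stabilize a consistent enumeration, we obtain an $\FO$-convergent sequence in the enriched language carrying constants that name each $R$-edge vertex. Substituting these constants into $\phi$ yields a formula with $|I|$ free variables (constant-local in the enriched language) whose Stone pairing converges by $\FO$-convergence of the enriched sequence. Each term of the finite average defining $\stonepar{\phi|_\pi}{\str{A}_n}$ therefore converges along the subsequence. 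Finally, the average is symmetric under re-orderings of the $R$-edges, so its limit is independent of the labelling chosen; every convergent sub-subsequence of $\stonepar{\phi|_\pi}{\str{A}_n}$ produces the same value, and by compactness of $[0,1]$ the full sequence converges. The chief obstacle is the correct application of the algebraic-rooting result and the verification that its subsequential statement suffices thanks to the symmetry of the average; once this is in hand, the rest of the argument reduces to bookkeeping of substitutions.
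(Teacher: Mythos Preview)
Your proof is correct and follows essentially the same route as the paper: verify the hypotheses of Theorem~\ref{thm:general_local_convergence}, with the only substantive step being the use of \cite{rooting_algebraic_vertices} to name the (finitely many) $R$-edge vertices by constants and thereby express each term of the finite average $\stonepar{\phi|_\pi}{\str{A}_n}$ as a Stone pairing in an $\FO$-convergent lifted sequence. The only difference is cosmetic: the paper invokes the rooting result directly to obtain an $\FO$-convergent lift $\strseq{A}^+$ of the full sequence, whereas you pass to subsequences and then recover full convergence via the symmetry of the average; one small point worth tightening in your version is that symmetry alone handles different \emph{labellings} along a fixed subsequence, and you implicitly use $\FO$-convergence of $\strseq{A}$ to ensure that different subsequential rooted limits agree up to a permutation of the root names.
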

\begin{proof}
    We apply Theorem~\ref{thm:general_local_convergence}; only the first assumption need to be verified.    
    Fix $p \in \N$, a profile $\pi = (I,E_1, \dots, E_t)$ with $t \leq r$, and a local formula $\phi(\tpl{x}_1, \dots, \tpl{x}_p) \in \FOloc{|I|+(p-|I|)\arity{R}}$.
    Without loss of generality, assume that $|I| = \{ p-|I| + 1, \dots, p \}$.
    We use the result from \cite{rooting_algebraic_vertices} to obtain an $\FO$-convergent sequence $\strseq{A}^+$ of lifts of $\strseq{A}$ with the property that each $R$-edge of $\str{A}_n$ is marked by a constant in $\str{A}_n^+$.
    (Note that although \cite{rooting_algebraic_vertices} assumes that the sequence $\strseq{A}$ has a limit structure, the limit statistics are sufficient for producing the lifts $\strseq{A}^+$.)
    
    It is possible to express the probability $\stonepar{\phi|_\pi}{\str{A}_n}$ as the sum over all choices of $R$-edges for the variables $\tpl{x}_1, \dots, \tpl{x}_{p-|I|}$ that form the representation of a $p$-tuple with the profile $\pi$.
    There is a finite number of such choices and the probability for each choice is computed by $\stonepar{\phi'}{\str{A}_n^+}$, where $\phi' \in \FOloc{|I|}$ is the formula $\phi$ after an appropriate substitution of constants for the variables $\tpl{x}_1, \dots, \tpl{x}_{p-|I|}$.
    Since $\strseq{A}^+$ is $\FO$-convergent, each of these sequences converge and their (finite) sum converges as well.
    Thus, the first assumption of Theorem~\ref{thm:general_local_convergence} is satisfied.
\end{proof}

Note that Example~\ref{ex:basic_fluctuation} (in the modified version with finitely many $R$-edges) shows that it is not possible to omit the assumption of elementary convergence of $\strseq{A}$.

In the following theorem, the dominance of internal vertices allows to reduce the assumption on $\strseq{G}$.

\begin{theorem}\label{thm:fo_dominant_internal_vertices}
    Let $\strseq{A}$ be a local convergent sequence of base structures and $\strseq{G}$ be an $\FOcloc{0}$-convergent sequence of gadgets such that the limit proportion of internal vertices in $\strseq{A}*\strseq{G}$ is $1$.
    Then the sequence $\strseq{A}*\strseq{G}$ is local convergent.
\end{theorem}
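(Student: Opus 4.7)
The plan is to derive this as a direct specialization of Theorem~\ref{thm:general_local_convergence}. I fix $p \in \N$ and verify its three hypotheses in the present setting; once these are established for every $p$, $\FOloc{}$-convergence of $\strseq{A}*\strseq{G}$ follows by definition.

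The driving observation is that, since the limit proportion of internal vertices in $\strseq{A}*\strseq{G}$ equals $1$, the probability that a uniformly sampled $p$-tuple from $\str{A}_n*\str{G}_n$ consists entirely of internal vertices tends to $1$ as $n \to \infty$ (bound the event that the tuple contains an external vertex by a union bound over the $p$ coordinates). Consequently, the unique profile $\pi = (I, E_1, \dots, E_t)$ of a $p$-tuple that is non-trivial with respect to $\strseq{A}*\strseq{G}$ is $\pi_0 = ([p], \emptyset)$, i.e. $I = [p]$ and $t = 0$. All the remaining verifications reduce to handling this single profile.

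With this in hand, hypothesis (iii) of Theorem~\ref{thm:general_local_convergence} is immediate, as the given limit $1$ is a limit in particular. For hypothesis (ii), the parameter $m = \max\{|E_1| : \pi \text{ is non-trivial}\}$ is a maximum over an empty index set and so, by the standard convention, $m = 0$; the assumed $\FOcloc{0}$-convergence of $\strseq{G}$ is then exactly what is required. For hypothesis (i), plugging $|I| = p$ into $|I| + (p-|I|)\arity{R}$ gives $p$, so it suffices to show convergence of $\stonepar{\phi|_{\pi_0}}{\strseq{A}}$ for every $\phi \in \FOloc{p}$. But the conditioning event in the definition of $\stonepar{\phi|_{\pi_0}}{\str{A}_n}$ — namely that each external index yields an $R$-edge and that indices in the same $E_j$ yield the same $R$-edge — is vacuous when $I = [p]$, so $\stonepar{\phi|_{\pi_0}}{\str{A}_n} = \stonepar{\phi}{\str{A}_n}$. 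The latter converges by the assumed local convergence of $\strseq{A}$.

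I do not anticipate any substantive obstacle: the argument is essentially a bookkeeping reduction to the general theorem. The only delicate point worth stating explicitly in the write-up is the convention $m = 0$ when no non-trivial profile exhibits external vertices; this is coherent with the spirit of Theorem~\ref{thm:general_local_convergence}, because in this regime the sampling inspects the gadgets only through the neighborhoods of their roots, which are controlled by $\FOcloc{0}$-convergence.
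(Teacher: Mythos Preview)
Your proposal is correct and follows essentially the same approach as the paper: both specialize Theorem~\ref{thm:general_local_convergence} by observing that the only non-trivial profile is $\pi = ([p])$, which collapses hypothesis (i) to local convergence of $\strseq{A}$ and hypothesis (ii) to $\FOcloc{0}$-convergence of $\strseq{G}$. Your write-up is somewhat more detailed (the paper's proof is a single sentence), and the explicit remark on the convention $m=0$ is a reasonable addition; note only that in the paper's profile notation the empty-external case is written simply as $([p])$ rather than $([p],\emptyset)$, since each $E_j$ is required to be non-empty.
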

\begin{proof}
    Fix $p \in \N$.
    The only non-trivial profile w.r.t. $\strseq{A}*\strseq{G}$ is $\pi = ([p])$ thus the first assumption of Theorem~\ref{thm:general_local_convergence} reduces to $\FOloc{p}$-convergence of $\strseq{A}$ and the second to $\FOcloc{0}$-convergence of $\strseq{G}$.
\end{proof}

If the number of $R$-edges tends to infinity, the constant-local convergence of $\strseq{G}$ reduces to $\FOcloc{1}$-convergence.

\begin{theorem}\label{thm:fo_general_conditioning_with_infinitely_many_r_edges}
    Let $\strseq{A}$ be a local convergent sequence of base structures and $\strseq{G}$ be an $\FOcloc{1}$-convergent sequence of gadgets satisfying
    \begin{enumerate}[(i)]
        \item for every profile $\pi = (I,E_1, \dots, E_t)$ with all $|E_j| = 1$ holds that for each $\phi \in \FOloc{|I|+(p-|I|)\arity{R}}$ the sequence $\stonepar{\phi|_\pi}{\strseq{A}}$ converges,
        \item the proportion of internal vertices in $\strseq{A}*\strseq{G}$ tends to a limit,
        \item $\lim |R^\strseq{A}| = \infty$.
    \end{enumerate}
    Then the sequence $\strseq{A}*\strseq{G}$ is local convergent.
\end{theorem}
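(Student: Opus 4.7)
The plan is to reduce directly to Theorem~\ref{thm:general_local_convergence}. Fix $p\in\N$; we need to verify the three hypotheses of that theorem. The third is just our assumption (ii). The heart of the argument is showing that under the hypothesis $\lim|R^\strseq{A}|=\infty$, every non-trivial profile $\pi=(I,E_1,\dots,E_t)$ with respect to $\strseq{A}*\strseq{G}$ satisfies $|E_j|=1$ for all $j$. Granting this, the maximum $m$ in hypothesis (ii) of the general theorem is at most $1$, so our $\FOcloc{1}$-convergence of $\strseq{G}$ is exactly what is required; and hypothesis (i) of the general theorem, which must be checked only for non-trivial profiles, reduces precisely to our assumption (i).

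For the triviality claim, fix a profile $\pi$ with $|E_{j_0}|\geq 2$ for some $j_0$, and pick two distinct indices $i,i'\in E_{j_0}$. A uniformly chosen $p$-tuple $\tpl{a}$ from $\str{A}_n*\str{G}_n$ has profile $\pi$ only if $a_i$ and $a_{i'}$ are external and $\rho(a_i)=\rho(a_{i'})$, i.e.\ they lie in the same copy of $\str{G}_n$. Since the copies of $\str{G}_n$ are vertex-disjoint apart from roots, each external vertex belongs to a unique copy, and the fraction of external vertices contained in any fixed copy is $(|V(\str{G}_n)|-\arity{R})/|V(\str{A}_n*\str{G}_n)|$. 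Summing over copies and using the independence of the two coordinates, the probability that positions $i$ and $i'$ are external in the same copy is at most $1/|R^{\str{A}_n}|$, which tends to $0$ by our assumption (iii). Hence $\pi$ is trivial.

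Combining these two paragraphs, all three hypotheses of Theorem~\ref{thm:general_local_convergence} are satisfied, so $\strseq{A}*\strseq{G}$ is $\FOloc{p}$-convergent. Since $p\in\N$ was arbitrary, $\strseq{A}*\strseq{G}$ is local convergent, as claimed.

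I expect the only subtle point to be the counting in the triviality estimate: one must be careful that the natural parametrization of external vertices by pairs $(\tpl{e},v)$ with $\tpl{e}\in R^{\str{A}_n}$ and $v\in V(\str{G}_n)\setminus\{z_1^{\str{G}_n},\dots,z_{\arity{R}}^{\str{G}_n}\}$ counts each external vertex exactly once (the copies share only their roots, which are internal), and that the uniform distribution on $V(\str{A}_n*\str{G}_n)$ therefore decomposes appropriately into internal/external contributions. Once this bookkeeping is in place, the probability bound $1/|R^{\str{A}_n}|\to 0$ is straightforward and the rest of the proof is a direct invocation of Theorem~\ref{thm:general_local_convergence}.
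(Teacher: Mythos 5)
Your proposal is correct and follows essentially the same route as the paper, which proves this theorem by a direct appeal to Theorem~\ref{thm:general_local_convergence} with the observation that, since $\lim |R^\strseq{A}| = \infty$, only profiles with all $|E_j| = 1$ are non-trivial. Your counting estimate (the probability that two coordinates land in the same gadget copy is at most $1/|R^{\str{A}_n}| \to 0$) simply makes explicit the triviality claim the paper states without detail, and it is accurate.
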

\begin{proof}
    This follows directly from Theorem~\ref{thm:general_local_convergence} as only the profiles with all $|E_j| = 1$ are non-trivial.
\end{proof}

A combination of these statements stems a pleasing corollary.

\begin{corollary}\label{cor:fo_dense_edges}
    Let $\strseq{A}$ be a local convergent sequence of base structures satisfying $\lim \stonepar{R}{\strseq{A}} > 0$ and $\strseq{G}$ be a constant-local convergent sequence of gadgets.
    Then the sequence $\strseq{A}*\strseq{G}$ is local convergent.
    
    Moreover, if $|V(\strseq{A})| \to \infty$, $\FOcloc{1}$-convergence of $\strseq{G}$ suffices for the conclusion.
\end{corollary}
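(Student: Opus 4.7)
The plan is to apply Theorem~\ref{thm:general_local_convergence} for each $p \in \N$; the density hypothesis enters through condition~(i) of that theorem, where it will keep the denominator of a certain conditional Stone pairing bounded away from zero. First, local convergence of $\strseq{A}$ applied to the $0$-local formula $x = y$ gives convergence of $1/|V(\str{A}_n)|$, so $|V(\str{A}_n)| \to \delta \in \N \cup \{\infty\}$; similarly, $\FOcloc{1}$-convergence of $\strseq{G}$ on $x = z_1$ forces $|V(\str{G}_n)| \to \gamma \in \N \cup \{\infty\}$. Combined with $|R^{\str{A}_n}|/|V(\str{A}_n)|^{\arity{R}} \to \alpha > 0$, substitution into
\[
    \frac{|V(\str{A}_n)|}{|V(\str{A}_n * \str{G}_n)|} = \frac{|V(\str{A}_n)|}{|V(\str{A}_n)| + |R^{\str{A}_n}|(|V(\str{G}_n)| - \arity{R})}
\]
handles condition~(iii) by a direct limit calculation over the possible values of $\delta$ and $\gamma$.

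For condition~(i), fix a non-trivial profile $\pi = (I, E_1, \dots, E_t)$. The conditioning event is defined by the quantifier-free (hence local) formula $\chi_\pi$ asserting $R(\tpl{b}_i)$ for $i \notin I$ together with the equality pattern $\tpl{b}_i = \tpl{b}_{i'} \Leftrightarrow i, i'$ share some $E_j$. Writing
\[
    \stonepar{\phi|_\pi}{\str{A}_n} = \frac{\stonepar{\phi \wedge \chi_\pi}{\str{A}_n}}{\stonepar{\chi_\pi}{\str{A}_n}},
\]
both Stone pairings converge by local convergence of $\strseq{A}$, and non-triviality of $\pi$ keeps the denominator positive in the limit. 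In the unbounded case $\delta = \infty$ this is sharpened further: $|R^{\str{A}_n}| \to \infty$ implies that any profile with some $|E_j| \geq 2$ is already trivial (its tuple count carries an extra factor $\Theta(1/|R^{\str{A}_n}|)$), so only profiles with all $|E_j| = 1$ survive and the denominator tends to $\alpha^{p - |I|} > 0$ — this is precisely where the denseness hypothesis is used.

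Finally, condition~(ii) asks for $\FOcloc{m}$-convergence of $\strseq{G}$ with $m$ equal to the maximum $|E_1|$ over non-trivial profiles. The analysis above gives $m = 1$ whenever $\delta = \infty$, matching the $\FOcloc{1}$-convergence assumed in the ``moreover'' statement. In the remaining case $\delta < \infty$, non-trivial profiles can have $|E_j|$ up to $p$, and the full $\FOcloc{}$-convergence of $\strseq{G}$ assumed in the main statement is needed (and is exactly enough). The principal technical hurdle is the lower bound on $\stonepar{\chi_\pi}{\strseq{A}}$ in the unbounded regime; the denseness assumption is invoked there and essentially nowhere else.
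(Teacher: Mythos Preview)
Your proof is correct and follows the same essential strategy as the paper: both reduce to Theorem~\ref{thm:general_local_convergence} and both use the density hypothesis $\lim \stonepar{R}{\strseq{A}} > 0$ to keep the conditioning event $\chi_\pi$ of positive limit measure, so that the ratio $\stonepar{\phi \wedge \chi_\pi}{\strseq{A}} / \stonepar{\chi_\pi}{\strseq{A}}$ converges. The case analysis for the proportion of internal vertices and the observation that profiles with some $|E_j| \geq 2$ become trivial once $|R^{\strseq{A}}| \to \infty$ are likewise shared.

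The one organisational difference is this: the paper routes the two regimes through the intermediate Theorems~\ref{thm:fo_finitely_many_r_edges} and~\ref{thm:fo_general_conditioning_with_infinitely_many_r_edges}, and in the bounded-size case it first notes that $\strseq{A}$ is eventually constant (hence $\FO$-convergent) and then invokes Theorem~\ref{thm:fo_finitely_many_r_edges}, whose proof in turn relies on the external result from~\cite{rooting_algebraic_vertices}. You bypass this detour entirely, applying Theorem~\ref{thm:general_local_convergence} uniformly with the elementary ratio argument in both regimes. This is cleaner and more self-contained; the paper's route has the advantage only of exercising the auxiliary theorems it has already set up.
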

\begin{proof}
    If the size of structures in $\strseq{A}$ is bounded, the sequence is eventually constant (which is implied even by $\FOloc{2}$-convergence).
    Thus, Theorem~\ref{thm:fo_finitely_many_r_edges} applies: either the gadgets grow or are eventually constant as well.
    In both cases, the second assumption of the theorem is satisfied.
    
    Otherwise, it holds $\lim |R^\strseq{A}| = \infty$ and we use Theorem~\ref{thm:fo_general_conditioning_with_infinitely_many_r_edges}.
    The first assumption is satisfied due to the fact that conditioning on the selection of an $R$-edge in $\strseq{A}$ is possible:
    the event that a random $\arity{R}$-tuple forms an $R$-edge has positive probability.
    It remains to verify the last assumption of the theorem.
    We distinguish several cases to deduce the limit proportion $c$ of internal vertices.
    If $\lim |V(\strseq{G})| = \arity{R}$, i.e. $\strseq{G}$ eventually contains only roots, then $c = 1$.
    Assume otherwise.
    If $\arity{R} > 1$, we have $c = 0$.
    If $\arity{R} = 1$, then either $\lim |V(\strseq{G})| = \infty$ and $c = 0$, or $\lim |V(\strseq{G})| = k$ for some $k > \arity{R} = 1$, then
    \[
        c = \frac{1}{1 + (k-1)\stonepar{R}{\strseq{A}}} \in (0,1)
        .
    \]
    Therefore, all the assumptions of Theorem~\ref{thm:fo_general_conditioning_with_infinitely_many_r_edges} are satisfied and $\strseq{A}*\strseq{G}$ is local convergent.
\end{proof}

\subsubsection{Proof of Theorem~\ref{thm:general_local_convergence}}\label{sssec:proof_of_general_local_convergence}

We carry out the proof in two steps.
First, we show that the $(k,r,p)$-representation equivalence is a refinement of the $k$-elementary equivalence on structures $(\str{A} * \str{G}, \tpl{a})^r$.
Consequently, the sum of sizes of representation equivalence classes yields the size of an elementary equivalence class.
Hence, it is enough to prove that the statistics of $(k,r,p)$-representation equivalence converge.
Therefore, as the second step, we show how the local statistics of $\str{A}$ and $\str{G}$ affect the statistics of $(k,r,p)$-representation equivalence of $p$-tuples in $\str{A}*\str{G}$.

Let us start with a simple lemma about elementary equivalence after a restriction to neighborhoods.

\begin{lemma}\label{lem:restriction_to_neighborhoods}
    Let $\lambda$ be a language with constants and suppose we have positive integers $k,r,t$ satisfying $t \geq r \cdot \maxarity{\lambda} + k$.
    Let $\str{A}, \str{B}$ be $\lambda$-structures such that Duplicator wins $\EF_t(\str{A}; \str{B})$.
    Then Duplicator also wins the game $\EF_k(\str{A}^r; \str{B}^r)$.
\end{lemma}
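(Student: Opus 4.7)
The plan is to have Duplicator replay, move for move, the winning strategy $\sigma$ from the large game $\EF_t(\str{A}; \str{B})$ inside the smaller game $\EF_k(\str{A}^r; \str{B}^r)$. Whenever Spoiler picks a vertex in the smaller game, Duplicator consults $\sigma$ as if the pick had been made in the full game. After $k$ rounds the map produced by $\sigma$ is a partial isomorphism between $\str{A}$ and $\str{B}$, which restricts immediately to a partial isomorphism between the induced substructures $\str{A}^r$ and $\str{B}^r$. The entire content of the proof then reduces to verifying that each response supplied by $\sigma$ is a legal move in the smaller game, that is, that it always lands in the appropriate restricted substructure.

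To see this, I would fix a round $i \leq k$ and suppose Spoiler picks $u_i \in V(\str{A}^r)$, the symmetric case being handled identically. By definition of $\str{A}^r$ there is a constant $c$ of $\lambda$ with $\dist_{\str{A}}(u_i, c^{\str{A}}) \leq r$, and the pair $(c^{\str{A}}, c^{\str{B}})$ is in the partial isomorphism from the outset. Since Duplicator has been playing according to $\sigma$, the remaining $t - i$ rounds of the big game from the current position are still winning for Duplicator, and the budget satisfies $t - i \geq t - k \geq r \cdot \maxarity{\lambda}$. This is exactly the hypothesis needed to invoke Lemma~\ref{lem:measuring_distances} on the pairs $(u_i, c^{\str{A}})$ and $(v_i, c^{\str{B}})$, yielding $\dist_{\str{B}}(v_i, c^{\str{B}}) = \dist_{\str{A}}(u_i, c^{\str{A}}) \leq r$ and hence $v_i \in V(\str{B}^r)$, as required.

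The only delicate point will be the formal invocation of Lemma~\ref{lem:measuring_distances} at an intermediate stage of the simulated big game: the lemma is stated for a position with exactly two pairs of picked vertices, whereas at round $i$ several further pairs have already been picked. I would resolve this by observing that the proof of Lemma~\ref{lem:measuring_distances} needs $\sigma$ only to mirror a single Gaifman-path of length at most $r$, requiring at most $r \cdot \maxarity{\lambda}$ moves from Spoiler; the extra already-picked pairs do not interfere with this sub-argument. The resulting round-budget bookkeeping, splitting the total into $k$ rounds for the simulated small-game moves and $r \cdot \maxarity{\lambda}$ reserved for the distance argument, is precisely what drives the bound $t \geq r \cdot \maxarity{\lambda} + k$.
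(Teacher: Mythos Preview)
Your proposal is correct and follows essentially the same approach as the paper: simulate the small game inside the big game using Duplicator's winning strategy, and invoke Lemma~\ref{lem:measuring_distances} (with the budget $t-i \geq r\cdot\maxarity{\lambda}$) to check that each response stays in the $r$-neighborhood of the constants. Your write-up is more explicit than the paper's---in particular you spell out why the resulting partial isomorphism of $\str{A},\str{B}$ restricts to one of $\str{A}^r,\str{B}^r$, and you flag the harmless issue of extra picked pairs when applying Lemma~\ref{lem:measuring_distances}---but the argument is the same.
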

\begin{proof}
    We write $H = \EF_t(\str{A}; \str{B})$ and $H' = \EF_k(\str{A}^r; \str{B}^r)$.
    As usual, we use the game $H$ to determine the Duplicator's moves in the game $H'$.
    It is enough to verify that whenever Spoiler picks a vertex in the $r$-neighborhood of a constant in $\str{A}$ in the game $H$, Duplicator's response lies in the $r$-neighborhood of the constant in $\str{B}$ (and vice versa). 
    This follows from Lemma~\ref{lem:measuring_distances} as it is possible to measure distances up to $r$ for at least $k$ rounds of $H$ due to the assumption $t \geq r \cdot \maxarity{\lambda} + k$.
\end{proof}

We follow with the refinement property.

\begin{lemma}\label{lem:representation_equivalence_refines_elementary_equivalence}
    For $p$-tuples $\tpl{a}_1$ and $\tpl{a}_2$ from $\str{A}_1*\str{G}_1$ and $\str{A}_2*\str{G}_2$ with the profile $(I,E_1, \dots, E_t)$ we have
    \[
        (\str{A}_1*\str{G}_1, \tpl{a}_1) \approx_k^r (\str{A}_2*\str{G}_2, \tpl{a}_2)
        \implies
        (\str{A}_1 * \str{G}_1, \tpl{a}_1)^r \equiv_k (\str{A}_2 * \str{G}_2, \tpl{a}_2)^r
    \]
\end{lemma}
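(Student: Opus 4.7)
The plan is to build a winning strategy for Duplicator in the game
$H = \EF_k((\str{A}_1*\str{G}_1, \tpl{a}_1)^r; (\str{A}_2*\str{G}_2, \tpl{a}_2)^r)$
by orchestrating plays in three families of auxiliary games supplied by the representation equivalence, in the spirit of Lemma~\ref{lem:continuity_bound_via_ef_games} and Theorem~\ref{thm:elementary_equivalence_using_fragmentation}. The resources are: a base game $H_\str{A}$ on $\str{A}_1(\tpl{a}_1,r)$ and $\str{A}_2(\tpl{a}_2,r)$; a cluster-gadget game $H_\str{G}^j$ on $\str{G}_1^j(\tpl{a}_1,r)$ and $\str{G}_2^j(\tpl{a}_2,r)$ for each $j \in [t]$; and a reserve of fresh gadget games on $\str{G}_1^r$ and $\str{G}_2^r$, each started on demand whenever play enters a gadget copy whose base $R$-edge has not been encountered before. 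All three elementary equivalences supplied by the hypothesis provide Duplicator winning strategies of length $f(k,r,p)$, which I plan to budget as described below.

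For the per-round reply, if Spoiler picks an internal vertex $u$ I relay the move to $H_\str{A}$. If Spoiler picks an external vertex $u$ lying in some copy $\str{G}_S^{\tpl{e}}$, I first check whether $\tpl{e}$ has previously been matched to some $\tpl{f}$ in $\str{A}_D$, either initially (because $\tpl{e} = \rho(a_i)$ for some $i \in E_j$) or during an earlier round. If so, I use the relevant gadget game to translate $\iota_\tpl{e}(u)$ and lift the response back by $\iota_\tpl{f}^{-1}$. If $\tpl{e}$ is new, I first let Spoiler play the still-missing vertices of $\tpl{e}$ in $H_\str{A}$ in order to fix $\tpl{f}$, then launch a fresh gadget game on $\str{G}_1^r$ and $\str{G}_2^r$, and proceed as in the known case. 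Correctness of the resulting partial isomorphism mimics the case analysis of Lemma~\ref{lem:continuity_bound_via_ef_games}: each $S$-edge among the picked vertices comes either from a base structure (and is preserved by $H_\str{A}$) or from a single gadget copy (and is preserved by the gadget game tracking that copy).

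Two new technical issues, absent from Lemma~\ref{lem:continuity_bound_via_ef_games}, shape the rest of the proof. First, every response in $H$ must lie in the $r$-neighborhood of the roots of the ambient structure; this is handled by Lemma~\ref{lem:restriction_to_neighborhoods} together with the summand $r \cdot \maxarity{\lambda}$ inside $f(k,r,p)$, which guarantees that $H_\str{A}$ and all gadget games retain enough distance-measuring capacity to keep Duplicator inside the prescribed neighborhoods. The second and harder issue is consistency across the several simultaneous games: when a newly-encountered $R$-edge $\tpl{e}$ shares vertices with previously-picked internal vertices, with roots of an already-active gadget copy, or with the exposed portion of another $R$-edge in $H_\str{A}$, the matched $\tpl{f}$ is partially forced, and every fresh gadget game must be launched so that its implicit identification of roots agrees with these forced choices. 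Accommodating all such interactions over the $p$ initial representatives and $k$ rounds of play is precisely what the factor $(\arity{R})^{p+1}$ in $f(k,r,p)$ is designed to absorb: it supplies enough rounds in each auxiliary game to pre-expose all relevant $R$-edge structure and to keep the several gadget games aligned through the single base game. This round-accounting is the main obstacle I expect; the combinatorial verification that the resulting map is an isomorphism on the $r$-neighborhoods is then routine.
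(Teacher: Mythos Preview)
Your direct EF-game approach is workable, but the paper takes a different and considerably shorter route. Instead of orchestrating one base game, $t$ cluster-gadget games, and a pool of fresh gadget games simultaneously, the paper builds the target structures \emph{iteratively}: starting from $\str{A}_1(\tpl{a}_1,r) \equiv_{f(k,r,p)} \str{A}_2(\tpl{a}_2,r)$, it applies Lemma~\ref{lem:continuity_bound_via_ef_games} once for each $j\in[t]$ to replace the marked $R$-edge $\rho(a_\ell)$, $\ell\in E_j$, by the gadget $\str{G}_i^j(\tpl{a}_i,r)$, and once more to replace all remaining $R$-edges by $\str{G}_i^r$. Each application drops the equivalence degree by a factor $\arity{R}$, so after at most $p+1$ applications one has $\str{B}_1 \equiv_{r\cdot\maxarity{\lambda}+k} \str{B}_2$, and a single invocation of Lemma~\ref{lem:restriction_to_neighborhoods} yields $\str{B}_1^r \equiv_k \str{B}_2^r \cong (\str{A}_i*\str{G}_i,\tpl{a}_i)^r$.

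The advantage of the paper's argument is that all the cross-game consistency bookkeeping you flag as ``the main obstacle'' is already packaged inside Lemma~\ref{lem:continuity_bound_via_ef_games}; by applying it as a black box $t{+}1$ times, one never has to align several gadget games by hand. Your approach is essentially an unrolling of that iteration into one large composite strategy. A small remark on your budgeting: the exponent $(\arity{R})^{p+1}$ is not really ``designed to absorb'' the interactions in your direct game (a linear bound like $(p+k)\cdot\arity{R}$ rounds in $H_\str{A}$ would suffice there); rather, it is exactly the cost of the $p{+}1$ iterated applications of Lemma~\ref{lem:continuity_bound_via_ef_games} in the paper's proof, and merely happens to dominate what your direct strategy needs.
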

\begin{proof}
    For $i \in \{1,2\}$ in parallel, we iteratively apply Lemma~\ref{lem:continuity_bound_via_ef_games} to replace all the marked $R$-edges $\rho(a_\ell)$ in the base structures $\str{A}_i(\tpl{a}_i, r)$ by gadgets $\str{G}_i^j(\tpl{a}_i, r)$ for $j \in [t]$, where $\ell \in E_j$ (we keep the constants marking the external vertices of $\tpl{a}_i$).
    The remaining $R$-edges are replaced by the gadgets $\str{G}_i^r$.
    Denote the resulting structures by $\str{B}_1$ and $\str{B}_2$; observe that $\str{B}_i^r$ is isomorphic to the structure $(\str{A}_i*\str{G}_i, \tpl{a}_i)^r$ (possibly up to renaming constants).
    
    It remains to verify that $\str{B}_1^r \equiv_k \str{B}_2^r$: 
    we have started with $\str{A}_1(\tpl{a}_1, r) \equiv_{f(k,r,p)} \str{A}_2(\tpl{a}_2, r)$ and each application of Lemma~\ref{lem:continuity_bound_via_ef_games} reduces the degree of elementary equivalence by the factor of $\arity{R}$.
    Therefore, we have $\str{B}_1 \equiv_{r \cdot \maxarity{\lambda} + k} \str{B}_2$ due to our choice of the function $f$.
    The relation $\str{B}_1^r \equiv_k \str{B}_2^r$ follows from Lemma~\ref{lem:restriction_to_neighborhoods}.
\end{proof}

Now we proceed to show how to compute with the representation equivalence.
Let $\mathcal{C}$ be a class of $\approx_k^r$ assuming the the profile $\pi = (I,E_1, \dots, E_t)$ with a representative $(\str{A}_0*\str{G}_0, \tpl{a}_0)$, i.e. $(\str{A} * \str{G}, \tpl{a}) \in \mathcal{C}$ if and only if $(\str{A}_0 * \str{G}_0, \tpl{a}_0) \approx_k^r (\str{A} * \str{G}, \tpl{a})$.
The definition of $\approx_k^r$ implies that the class $\mathcal{C}$ may be described by $r$-constant-local formulas $\phi(\tpl{x}_1, \dots, \tpl{x}_p) \in \FOloc{|I|+(p-|I|)\arity{R}}(L_R)$, $\psi_j(x_1, \dots, x_{|E_j|}) \in \FOcloc{|E_j|}(L_G)$ for $j \in [t]$, and $\psi \in \FOcloc{0}(L_G)$ that capture the respective classes of $f(k,r,p)$-elementary equivalence of the structures $\str{A}_0(\tpl{a}_0, r)$, $\str{G}_0^j(\tpl{a}_0, r)$, and $\str{G}^r_0$.

We want to express the probability that for a $p$-tuple $\tpl{a}$ uniformly selected from $\str{A}*\str{G}$ holds $(\str{A}*\str{G}, \tpl{a}) \in \mathcal{C}$.

Recall that $\stonepar{\phi|_\pi}{\str{A}}$ denotes the probability that a uniformly selected sequence (of tuples) $\tpl{b}_1, \dots, \tpl{b}_p$ from $\str{A}$ satisfies $\phi$, given that it is a representation of a $p$-tuple with the profile $\pi$.
Furthermore, let $\stonepar{\psi_j|_{\nonroot{}}}{\str{G}}$ stand for the probability that $\str{G} \models \psi_j(c_1, \dots, c_{|E_j|})$ for a random $|E_j|$-tuple $\tpl{c}$, conditioned on the fact that each $c_i$ is a non-root of $\str{G}$.

The following statement summarizes the discussion and notation from above.

\begin{lemma}\label{lem:representation_statistics_determines_statistics}
    Fix a class $\mathcal{C}$ of $\approx_k^r$ as above.
    Let $\tpl{a}$ be a random $p$-tuple from $\str{A}*\str{G}$.
    Denote by $c$ the proportion of internal vertices in $\str{A}*\str{G}$ and by $m$ the number of $R$-edges in $\str{A}$.
    
    Then the probability that $\tpl{a}$ has profile $\pi$ is   
    \[
        c^{|I|}(1-c)^{p-|I|}\frac{m(m-1)\dots(m-t+1)}{r^{p-|I|}}
        .
    \]
    Given that $\tpl{a}$ has profile $\pi$, the probability that $(\str{A}*\str{G}, \tpl{a}) \in \mathcal{C}$ can be expressed as
    \[
        \stonepar{\phi|_\pi}{\str{A}} \cdot \stonepar{\psi}{\str{G}} \cdot \prod_{j \in [t]} \stonepar{\psi_j|_{\nonroot}}{\str{G}}
        .
    \]
\end{lemma}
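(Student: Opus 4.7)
The plan is to derive the two formulas separately by tracking the uniform random choice of $\tpl{a}$ through the decomposition of $V(\str{A}*\str{G})$ into internal vertices (identified with $V(\str{A})$) and external vertices (identified bijectively with pairs $(\tpl{e}, v)$ for $\tpl{e} \in R^\str{A}$ and $v$ a non-root of $\str{G}$).

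For the profile formula, I would first observe that since a proportion $c$ of the vertices in $\str{A}*\str{G}$ is internal, the probability that the internality/externality pattern of $\tpl{a}$ matches $\pi$ — that is, $a_i$ is internal exactly when $i \in I$ — equals $c^{|I|}(1-c)^{p-|I|}$, with the $p$ positions independent. Conditionally on externality at a given position, the pair $(\rho(a_i), \iota_{\rho(a_i)}(a_i))$ is uniform on $R^\str{A} \times (V(\str{G}) \setminus \{z_1^\str{G},\ldots,z_{\arity{R}}^\str{G}\})$ and independent across positions, so in particular the $\rho$-values form i.i.d.\ uniform draws from $R^\str{A}$. The additional profile requirement is that these draws are constant inside each $E_j$ and realize $t$ pairwise distinct edges across groups, which I would count group by group: the first member of each group $E_j$ chooses a fresh $R$-edge (factor $(m-j+1)/m$) while each subsequent member of $E_j$ is forced to match it (factor $1/m$), which multiplies to the claimed $m(m-1)\cdots(m-t+1)$ factor in the numerator.

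For the conditional formula, I would use the definition of $\mathcal{C}$: by $\approx_k^r$ together with the choice of $\phi, \psi_j, \psi$ as representatives of the three relevant elementary equivalence classes, the membership $(\str{A}*\str{G}, \tpl{a}) \in \mathcal{C}$ is equivalent to the conjunction of $\str{A}(\tpl{a}, r) \models \phi$, $\str{G}^j(\tpl{a}, r) \models \psi_j$ for each $j \in [t]$, and $\str{G}^r \models \psi$. The heart of the argument, and the one subtle step, is the independence of these three pieces of data. Conditionally on the profile $\pi$, the representation of $\tpl{a}$ in $\str{A}$ (given by the internal vertices themselves for $i \in I$ and by $\rho(a_i)$ for external $i$) is independent of the vertices $\iota_{\rho(a_i)}(a_i)$ inside the gadget copies, because external vertices factor as a product of independent coordinates. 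Moreover, distinct groups $E_j$ correspond to pairwise distinct $R$-edges, so they select non-root vertices in vertex-disjoint copies $\str{G}^\tpl{e}$, making the groups independent of each other. The skeleton $\str{G}^r$ is deterministic, so its factor contributes a $0/1$ value.

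Once independence is established, each factor is identified with its Stone-pairing counterpart directly from the definitions stated just before the lemma: the $\str{A}$-factor equals $\stonepar{\phi|_\pi}{\str{A}}$ by the very definition of the conditioning symbol $|_\pi$; for each $j$, the tuple $(\iota_{\rho(a_i)}(a_i))_{i \in E_j}$ is a uniform $|E_j|$-tuple of non-roots of $\str{G}$, which yields $\stonepar{\psi_j|_{\nonroot}}{\str{G}}$; and $\str{G}^r \models \psi$ records the Stone pairing $\stonepar{\psi}{\str{G}}$ of the sentence $\psi$. Multiplying yields the claimed product. The main obstacle I anticipate is making the independence argument rigorous in the finitary setting — in particular, correctly handling the joint distribution over $(R^\str{A})^{p-|I|}$ conditioned on the edge-coincidence pattern prescribed by $\pi$, and confirming that the \emph{same} conditioning yields both i.i.d.\ uniform non-root vertices inside each gadget copy and a uniform $R$-edge sequence in $\str{A}$.
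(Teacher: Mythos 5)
Your proposal is correct and follows essentially the same route as the paper's (very terse) proof: factor the uniform choice of $\tpl{a}$ into the internal/external pattern, the edge-coincidence pattern of the $\rho$-values, and the independent uniform non-root vertices inside the gadget copies, then identify the three conditionally independent factors with $\stonepar{\phi|_\pi}{\str{A}}$, $\stonepar{\psi_j|_{\nonroot}}{\str{G}}$, and the deterministic $\stonepar{\psi}{\str{G}}$. Your count of the grouping probability correctly gives denominator $m^{p-|I|}$, which matches the intended statement (the $r^{p-|I|}$ in the displayed formula is evidently a slip for $m^{p-|I|}$), and your flagged "obstacle" is handled exactly as you suggest: i.i.d.\ uniform draws from $R^\str{A}$ conditioned on the coincidence pattern are uniform over the admissible configurations, which is precisely the conditioning in the definition of $\stonepar{\phi|_\pi}{\str{A}}$.
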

\begin{proof}
    The calculation is straightforward.
    A random $p$-tuple $\tpl{a}$ has the internal and external vertices at the prescribed indices with the probability $c^{|I|}(1-c)^{p-|I|}$.
    The factor $\frac{m(m-1)\dots(m-t+1)}{r^{p-|I|}}$ calculates the probability that the external vertices are grouped in $t$ distinct copies of the gadget in $\str{A}*\str{G}$ according to the profile $\pi$.
    
    In the second part, the event $(\str{A}*\str{G}, \tpl{a}) \in \mathcal{C}$ occurs if and only if the structures $\str{A}(\tpl{a}, r)$ and each $\str{G}^j(\tpl{a}, r)$ satisfy the formulas $\phi$ and $\psi_j$, respectively (and $\str{G} \models \psi$, which does not depend on $\tpl{a}$).
    This probability is given by $\stonepar{\phi|_\pi}{\str{A}}$, resp. $\stonepar{\psi_j|_{\nonroot}}{\str{G}}$, and all these events are independent.
\end{proof}

Now we are ready to prove Theorem~\ref{thm:general_local_convergence}.

\begin{proof}[Proof of Theorem~\ref{thm:general_local_convergence}]
    Fix arbitrary $k, r \in \N$.
    We use Lemma~\ref{lem:representation_statistics_determines_statistics} to show that for each class $\mathcal{C}$ of $(k,r,p)$-representation equivalence for a non-trivial profile $\pi$ holds that the probabilities $P_n$ of $(\str{A}_n * \str{G}_n, \tpl{a}_n) \in \mathcal{C}$ converge (for trivial profiles the probability is $0$ as they do not occur a.a.s.).
    The probability $P_n$ is expressed as a finite product of probabilities; hence, the claim reduces to showing convergence of each factor, which follows directly from the assumptions.
    In particular, if the gadgets $\strseq{G}$ does eventually contain non-roots, the probability $\stonepar{\psi_j|_{\nonroot}}{\strseq{G}}$ converges (otherwise, this factor does not appear for a non-trivial profile).
    
    Let $\xi \in \FOloc{p}$ be $r$-local with $\qrank{\xi} \leq k$.
    Using the fact that $\approx_k^r$ is a refinement of $\equiv_k$ on $r$-neighborhoods (Lemma~\ref{lem:representation_equivalence_refines_elementary_equivalence}) of finite index, we may express the probability $\stonepar{\xi}{\strseq{A}*\strseq{G}}$ as a finite sum of convergent sequences.
    Thus, the sequence $\stonepar{\xi}{\strseq{A}*\strseq{G}}$ converges.
\end{proof}

\subsubsection{Generalization to multiple gadgets}\label{sssec:multiple_gadgets}

Here we generalize Theorem~\ref{thm:general_local_convergence} for repeated gadget construction with multiple gadgets.
This is preparation for Section~\ref{ssec:extension_to_fragmented_structures}, where we reduce the proof of local convergence of $\strseq{A}*\strseq{G}$ to showing local convergence of a sequence obtained by repeated application of gadget construction.

We consider a sequence $\strseq{A}$ in the language $L_\tpl{R} = L \cup \{R_1, \dots, R_\ell\}$ and sequences of gadgets $\strseq{G}^{(1)}, \dots, \strseq{G}^{(\ell)}$ for the respective symbols.
The language of each $\strseq{G}^{(j)}$ is $L$ extended by $\arity{R_j}$ constants for roots; in particular, $\strseq{G}^{(j)}$ contains no $R_k$ edges for any $k$.
We aim for local convergence of the sequence $\strseq{B} = (\dots(\strseq{A}*\strseq{G}^{(1)}) * \dots ) *\strseq{G}^{(\ell)}$.
We usually omit the parenthesis since the intended order of the evaluation is obvious.
The \emph{internal} vertices of $\strseq{B}$ are the original vertices of $\strseq{A}$ while the other vertices are \emph{external}.
Specifically, the external vertices in a copy of $\strseq{G}^{(j)}$ are \emph{$j$-external}.

Let us generalize the notion of a profile.

\begin{definition}[Multi-profile]
    Let $\tpl{a}$ be a $p$-tuple from $\str{B} = \str{A}*\str{G}^{(1)}* \dots * \str{G}^{(\ell)}$.
    The multi-profile of a $p$-tuple $\tpl{a}$ from $\str{B}$ is a partition $(I, \mathcal{E}^1, \dots, \mathcal{E}^\ell)$ of the set $[p]$, where each $\mathcal{E}^j$ is partitioned into $E^j_1, \dots, E^j_{t_j}$ such that
    \begin{align*}
        I             &= \{i : a_i\text{ is internal}\}
        , \\
        \mathcal{E}^j &= \{i : a_i\text{ is $j$-external}\}
        .
    \end{align*}
    Two indices $i,i' \in \mathcal{E}^j$ share a set $E^j_k$ if and only if $\rho(a_i) = \rho(a_{i'})$, where $\rho(a_i)$ denotes the $R_j$-edge of the gadget's copy where $a_i$ lie.
    The sets $I$ and $\mathcal{E}^j$ are possibly empty while we require each $E^j_k$ being non-empty.
    The sets $E^j_1, \dots, E^j_{t_j}$ are listed by the ascending order of their minimal elements.
\end{definition}

Again, we call a multi-profile $\pi$ trivial with respect to the sequence $\strseq{B}$ if the probability that a random $p$-tuple from $\strseq{B}$ has the multi-profile $\pi$ tends to $0$.

We also revise the symbol $\stonepar{\phi|_\pi}{\str{A}}$, where $\pi$ is a multi-profile of a $p$-tuple and $\phi(\tpl{x}_1, \dots, \tpl{x}_p) \in L_\tpl{R}$ is a formula with $p$ blocks of free variables.
We write $\stonepar{\phi|_\pi}{\str{A}}$ for the probability that $\phi$ is satisfied by a uniformly chosen vertices $\tpl{b}_1, \dots, \tpl{b}_p$ from $\str{A}$, given that $\tpl{b}_i \in R_j^\str{A}$ if $i \in \mathcal{E}^j$, and for $i, i' \in \mathcal{E}^j$ it holds that $\tpl{b}_i = \tpl{b}_{i'}$ iff $i, i' \in E^j_k$ for some $k$.

\begin{theorem}\label{thm:local_multiple_gadgets}
    Fix $p \in \N$.
    Let $\strseq{A}$ be a sequence of base structures and $\strseq{G}^{1}, \dots, \strseq{G}^{\ell}$ be sequences of gadgets.
    Write $\strseq{B}$ for the sequence $\strseq{A}*\strseq{G}^{(1)} * \dots *\strseq{G}^{(\ell)}$.
    Suppose that the following conditions hold:
    \begin{enumerate}[(i)]
        \item for every multi-profile $\pi = (I, \mathcal{E}^1, \dots, \mathcal{E}^\ell)$ of a $p$-tuple that is non-trivial w.r.t. $\strseq{B}$ holds that for each $\phi$ with $p$ blocks the sequence $\stonepar{\phi|_\pi}{\strseq{A}}$ converges,
        \item for each $j \in [\ell]$ is $\strseq{G}^{(j)}$ an $\FOcloc{m_j}$-convergent sequence of gadgets, where
        \[
            m_j = \max \{ |E^j_k| : \pi = (I, \mathcal{E}^1, \dots, \mathcal{E}^\ell) \text{ is non-trivial w.r.t. } \strseq{B} \}
        \]
        \item the proportion of internal vertices and $j$-external vertices, for each $j \in [\ell]$, in $\strseq{B}$ tends to a limit.
    \end{enumerate}
    Then the sequence $\strseq{B}$ is $\FOloc{p}$-convergent.
\end{theorem}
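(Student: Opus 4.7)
The plan is to generalize the proof strategy of Theorem~\ref{thm:general_local_convergence} directly rather than iterate it, since iterating would require checking that each intermediate sequence $\strseq{A} * \strseq{G}^{(1)} * \cdots * \strseq{G}^{(j)}$ meets the hypotheses needed to apply the previous theorem once more, and the translation of multi-profile conditions into profile conditions between layers is awkward. The strategy is to define a multi-representation equivalence $\approx_k^r$ on $p$-tuples of $\strseq{B}$, show it refines $\equiv_k$ on the $r$-neighborhood $(\str{B}, \tpl{a})^r$, and then compute the probability that a random $p$-tuple lies in a given class as a product of sequences that each converge by the hypotheses.

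For a tuple $\tpl{a}$ with multi-profile $(I, \mathcal{E}^1, \dots, \mathcal{E}^\ell)$, its representation consists of the rooted neighborhood $\str{A}(\tpl{a}, r)$ (rooted at internal vertices and at the $R_j$-edges $\rho(a_i)$ for $j$-external $a_i$), together with, for each $j$ and each block $E^j_k$, the rooted gadget neighborhood $\str{G}^{(j),k}(\tpl{a}, r)$ in $\str{G}^{(j)}$, and additionally the rooted neighborhoods $(\str{G}^{(j)})^r$ themselves for each $j$. Two tuples are $(k,r,p)$-representation equivalent if all these data coincide up to some sufficiently high elementary equivalence degree $f(k, r, p, \ell)$. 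The refinement lemma is proved by iteratively applying Lemma~\ref{lem:continuity_bound_via_ef_games}: loop over $j = 1, \dots, \ell$ and replace $R_j$-edges in $\str{A}(\tpl{a}, r)$ by the marked $\str{G}^{(j),k}(\tpl{a}, r)$'s and, for the unmarked ones, by $(\str{G}^{(j)})^r$. Each pass divides the elementary equivalence degree by a factor of $\arity{R_j}$, so $f$ must absorb a product roughly like $\bigl(\prod_j \arity{R_j}\bigr)(r \cdot \maxarity{L} + k)$, after which Lemma~\ref{lem:restriction_to_neighborhoods} restricts to the $r$-neighborhood.

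The analogue of Lemma~\ref{lem:representation_statistics_determines_statistics} is then routine: the probability that a random $p$-tuple falls into a fixed class $\mathcal{C}$ of $\approx_k^r$ factorizes as a product of the probability of having the given multi-profile (a polynomial in the limit proportions of internal and $j$-external vertices and in the counts $|R_j^\strseq{A}|$) with the representation-level factors $\stonepar{\phi|_\pi}{\strseq{A}}$, $\stonepar{\psi^{(j)}_k|_{\nonroot}}{\strseq{G}^{(j)}}$, and sentence factors $\stonepar{\xi^{(j)}}{\strseq{G}^{(j)}}$. Assumptions (i)--(iii) force every factor to converge (with the classes in the ranges of $|E^j_k|$ exactly covered by the chosen $m_j$), so for each class $\mathcal{C}$ the probability $P_n$ of landing in $\mathcal{C}$ converges. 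Since $\approx_k^r$ has finite index and refines $\equiv_k$ on $r$-neighborhoods, any $r$-local $\xi$ with $\qrank{\xi} \leq k$ yields a Stone pairing $\stonepar{\xi}{\strseq{B}}$ that is a finite sum of such convergent sequences. The main obstacle will be the bookkeeping in the refinement lemma: with $\ell$ rounds of edge replacement each consuming a factor $\arity{R_j}$, one must fix $f$ carefully enough that elementary equivalence survives all $\ell$ passes and the final neighborhood restriction still delivers degree $k$; a secondary subtlety is verifying independence of the various factors in the factorization, which reduces to the fact that copies of the gadgets are mutually vertex-disjoint off their roots.
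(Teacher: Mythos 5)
Your proposal is correct in substance, but it takes a genuinely different route from the paper. The paper proves Theorem~\ref{thm:local_multiple_gadgets} by induction on $\ell$: it performs one gadget construction to form $\strseq{C} = \strseq{A}*\strseq{G}^{(\ell)}$, invokes the induction hypothesis (with base case Theorem~\ref{thm:general_local_convergence}) for the base structures $\strseq{C}$ and the remaining gadgets, and the whole work of the induction step is exactly the translation you dismiss as awkward --- verifying that the conditional probabilities $\stonepar{\phi|_\pi}{\strseq{C}}$ converge, which the paper does by splitting $I$ into $I'$ and $\mathcal{E}^\ell$, so that a multi-profile for $\strseq{C}*\strseq{G}^{(1)}*\dots*\strseq{G}^{(\ell-1)}$ lifts to a multi-profile $\pi'$ for $\strseq{B}$, and then appealing to hypothesis (i) together with the convergent proportion of internal versus $\ell$-external vertices in $\strseq{C}$. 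You instead redo the single-gadget machinery in the multi-gadget setting: a multi-representation equivalence, a refinement lemma obtained by iterating Lemma~\ref{lem:continuity_bound_via_ef_games} over all $\ell$ relations followed by Lemma~\ref{lem:restriction_to_neighborhoods}, and a factorization lemma generalizing Lemma~\ref{lem:representation_statistics_determines_statistics}. This is sound: any fixed $f$ at least $\bigl(\max_j\arity{R_j}\bigr)^{p+\ell}(r\cdot\maxarity{L}+k)$ survives the at most $p+\ell$ replacement passes (one per marked block plus one ``generic'' pass per relation, possible because the gadgets contain no $R_j$-edges, so the order of passes is immaterial), the independence in the factorization is exactly the disjointness-off-roots you cite, and assumptions (i)--(iii) make each factor converge just as in the paper's Lemma~\ref{lem:representation_statistics_determines_statistics}. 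What your approach buys is a self-contained argument with the independence structure fully explicit and no need to re-verify hypotheses for intermediate sequences; what the paper's induction buys is brevity, reusing Theorem~\ref{thm:general_local_convergence} as a black box at the price of the (admittedly somewhat informally argued) hypothesis transfer to $\strseq{C}$. The only caveat is the one you already flag: the constant bookkeeping in the refinement lemma must be fixed once and for all, and the convergence of the multi-profile probabilities should be spelled out from assumption (iii) --- but this is at the same level of detail as the paper's own treatment of the $\ell=1$ case.
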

\begin{proof}
    We proceed by induction on $\ell$.
    For $\ell = 1$, the statement reduces to Theorem~\ref{thm:general_local_convergence}.
    
    Consider $\ell > 1$.
    Our plan is to apply gadget construction once to obtain the sequence $\strseq{C} = \strseq{A}*\strseq{G}^{(\ell)}$ and then use the induction hypothesis for the base structures $\strseq{C}$ and gadgets $\strseq{G}^{(1)}, \dots, \strseq{G}^{(\ell - 1)}$.
    We only need to verify that all the conditional probabilities $\stonepar{\phi|_{\pi}}{\strseq{C}}$ converge.
    This, in fact, follows by a refinement of the ideas behind the representation equivalence technique.
    
    Let $\pi = (I, \mathcal{E}^1, \dots, \mathcal{E}^{\ell-1})$ be a non-trivial multi-profile of a $p$-tuple from $\strseq{C}*\strseq{G}^{(1)}*\dots*\strseq{G}^{(\ell-1)}$.
    The set $I$ contains indices of vertices of $\strseq{C}$ that divide into internal and $\ell$-external.
    Thus, we can decompose the set $I$ into $I'$ and $\mathcal{E}^{\ell}$ obtaining a profile $\pi' = (I', \mathcal{E}^1, \dots, \mathcal{E}^{\ell-1}, \mathcal{E}^{\ell})$ for the structure $\strseq{A}*\strseq{G}^{(1)} * \dots *\strseq{G}^{(\ell)}$.
    
    Consider tuples $\tpl{b}_1, \dots, \tpl{b}_p$ from $\str{C}_n$ representing a tuple with the multi-profile $\pi$.
    We can infer whether $\str{C}_n \models \phi(\tpl{b}_1, \dots, \tpl{b}_p)$ from (the behavior of) the representation of $\tpl{b}_1, \dots, \tpl{b}_p$ in $\str{A}_n$, which always has one of the multi-profiles $\pi'$, and the representation in (copies of) $\str{G}^{(\ell)}_n$.
    As $\pi'$ is certainly a non-trivial profile w.r.t. $\strseq{B}$, the sequence $\stonepar{\phi'|_{\pi'}}{\strseq{A}}$ converges for any formula $\phi'$ with $p$ blocks that describes the behavior the representation of $\tpl{b}_1, \dots, \tpl{b}_p$ in $\str{A}_n$.
    
    It remains to observe that the representation $\tpl{b}_1, \dots, \tpl{b}_p$ in $\strseq{A}$ have one particular profile $\pi'$ with a convergent probability, i.e. that the probability of a vertex from $\strseq{C}$ being an internal vertex or an $\ell$-external vertex converges.
    This is the assumption from the statement.
\end{proof}

\subsection{Exploiting locality}\label{ssec:exploiting_locality}

Here we state another kind of a sufficient condition for local convergence, which does not involve the assumption on conditional behavior of $R$-edges in $\strseq{A}$.
We prove that if the mass of the gadgets around the roots is vanishing, the structures $\str{A}_n*\str{G}_n$ behave essentially the same as the disjoint union of the structures $\str{A}_n$ endowed by neighborhoods of the roots of $\str{G}_n$ and $|R^{\str{A}_n}|$ copies of $\str{G}_n$.
Such a decomposition trivially implies the local convergence of the sequence $\strseq{A}*\strseq{G}$ provided that the sequence $|R^\strseq{A}|$ and the proportion of internal vertices tends to a limit.

We start with a more general treatment based on \cite{clustering} that justifies our approach.
Recall that $\nu_\str{A}(S)$ denotes the relative size of the set $S \subseteq V(\str{A})$ within the structure $\str{A}$.

\begin{definition}\label{def:small_and_negligible}
    Let $\lambda$ be a relational language with constants.
    Let $\strseq{A}$ be a sequence of $\lambda$-structures and let $\seq{S}$ be a sequence of subsets of $\strseq{A}$, i.e. $S_n \subseteq V(\str{A}_n)$.
    The sequence $\seq{S}$ is \emph{negligible} if 
    \[
        \forall r \in \N: \limsup \nu_\strseq{A}(N_\strseq{A}^r(\seq{S})) = 0
        .
    \]
    Moreover, a negligible sequence $\seq{S}$ is \emph{strongly-negligible} if each $r$-neighborhood of $\seq{S}$ eventually avoids all constants in $\strseq{A}$.
    That is, if
    \[
        \forall r \in \N \;\exists n_0 \in \N \;\forall n \geq n_0: \text{ the set $N_{\str{A}_n}^r(S_n)$ contains no constant from $\str{A}_n$}
        .
    \]
\end{definition}

Two sequences $\strseq{A}$ and $\strseq{B}$ that differ only by a negligible sequence are called \emph{equivalent}.
If $\strseq{A}$ and $\strseq{B}$ are equivalent and $\strseq{A}$ is local convergent, then $\strseq{B}$ is also local convergent with $\lim \stonepar{\phi}{\strseq{A}} = \lim \stonepar{\phi}{\strseq{B}}$ for each $\phi \in \FOloc{}$ \cite[Lemma~3]{clustering}.
Our variant of the notion behaves analogously with respect to the constant-local formulas and constant-local convergence.
If two sequences $\strseq{A}$ and $\strseq{B}$ differ only by a strongly-negligible sequence, we called them \emph{strongly-equivalent}.

Strongly-equivalent sequences of gadgets are a natural tool for examining local convergence of sequences of resulting structures.

\begin{lemma}\label{lem:negligible_sequences_and_equivalent_structures}
    Let $\strseq{A}$ be a sequence of base structures and $\strseq{G}, \strseq{G}'$ be strongly-equivalent sequences of gadgets.
    Then $\strseq{A}*\strseq{G}$ is equivalent to $\strseq{A}*\strseq{G}'$.
\end{lemma}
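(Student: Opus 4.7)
My plan is to extract, from the strong equivalence of $\strseq{G}$ and $\strseq{G}'$, a pair of sets $S_n \subseteq V(\str{G}_n)$ and $S_n' \subseteq V(\str{G}_n')$ witnessing it (so $\str{G}_n \setminus S_n = \str{G}_n' \setminus S_n'$ and both sequences are strongly-negligible), and then to lift them to the corresponding sets in the constructed structures:
\[
    T_n = \bigcup_{\tpl{e} \in R^{\str{A}_n}} \iota_\tpl{e}^{-1}(S_n) \subseteq V(\str{A}_n*\str{G}_n),
    \qquad
    T_n' = \bigcup_{\tpl{e} \in R^{\str{A}_n}} \iota_\tpl{e}^{-1}(S_n') \subseteq V(\str{A}_n*\str{G}_n').
\]
Since $S_n$ is strongly-negligible it eventually contains no root of $\str{G}_n$, and analogously for $S_n'$, so eventually $T_n$ and $T_n'$ consist solely of external vertices. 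The first step is then to check the structural equality $\str{A}_n*\str{G}_n \setminus T_n = \str{A}_n*\str{G}_n' \setminus T_n'$: outside the distinguished sets the gadgets coincide by hypothesis, and the internal vertices together with all $R$-edges of $\str{A}_n$ play identical roles on both sides, so the two substructures are canonically isomorphic (indeed identical up to our convention of identifying internal vertices with vertices of $\str{A}_n$).

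The main task is to show that $\seq{T}$ is negligible in $\strseq{A}*\strseq{G}$; the argument for $\seq{T}'$ is symmetric. Fix $r \in \N$. Because $\seq{S}$ is strongly-negligible, for $n$ large enough $N^r_{\str{G}_n}(S_n)$ contains no root of $\str{G}_n$. In the constructed structure $\str{A}_n*\str{G}_n$ any path leaving a copy $\str{G}_n^\tpl{e}$ must pass through one of its roots (the only vertices identified with the outside world), so for such $n$ the $r$-neighborhood of each $\iota_\tpl{e}^{-1}(S_n)$ is contained entirely inside the copy $\str{G}_n^\tpl{e}$. This yields the uniform bound
\[
    |N^r_{\str{A}_n*\str{G}_n}(T_n)|
    \leq |R^{\str{A}_n}| \cdot |N^r_{\str{G}_n}(S_n)|.
\]

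Dividing by $|V(\str{A}_n*\str{G}_n)|$ and using the trivial lower bound $|V(\str{A}_n*\str{G}_n)| \geq |R^{\str{A}_n}|\,(|V(\str{G}_n)| - \arity{R})$ (valid when $|R^{\str{A}_n}| > 0$) reduces the question to the ratio $|N^r_{\str{G}_n}(S_n)|/(|V(\str{G}_n)| - \arity{R})$, which is bounded by a constant multiple of $\nu_{\str{G}_n}(N^r_{\str{G}_n}(S_n))$ once $|V(\str{G}_n)| \geq 2\arity{R}$. The degenerate cases are easy to dispose of: if $|R^{\str{A}_n}| = 0$ then $T_n = \emptyset$, and if $|V(\str{G}_n)| = \arity{R}$ then strong negligibility forces $S_n = \emptyset$ from some $n$ on. Since $\seq{S}$ is negligible, $\limsup \nu_{\str{G}_n}(N^r_{\str{G}_n}(S_n)) = 0$, and the desired $\limsup \nu_{\strseq{A}*\strseq{G}}(N^r(\seq{T})) = 0$ follows.

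I do not expect any genuine obstacle; the only delicate point is to make sure that strong negligibility (rather than mere negligibility) is what prevents the $r$-neighborhood of $T_n$ from leaking through the roots of some copy $\str{G}_n^\tpl{e}$ into neighboring copies, because such leakage could inflate $|N^r(T_n)|$ by a factor proportional to the vertex count of $\str{A}_n$, which is not controlled by $\nu_{\str{G}_n}(S_n)$ alone. Once the containment within a single copy is established, the counting is straightforward.
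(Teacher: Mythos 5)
Your proof is correct and follows essentially the same route as the paper, whose own argument is a one-line version of your counting: the removed sets, taken over all gadget copies, are negligible relative to the total size of the copies, with strong negligibility ensuring the $r$-neighborhoods stay inside each copy rather than leaking through the roots. Your write-up just makes this bound and the degenerate cases explicit.
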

\begin{proof}
    The sequences $\strseq{A}*\strseq{G}$ and $\strseq{A}*\strseq{G}'$ differ only by a union of negligible sequences which is a negligible sequence (the size of the union is negligible w.r.t. the total size of gadget's copies).
\end{proof}

There is an obvious way how to turn a negligible sequence into a strongly-negligible sequence.
For a function $f: \N \to \N$, the expression $\strseq{A}^{\bm{f}}$ stands for the sequence $(\str{A}_n^{f(n)})_{n \in \N}$.

\begin{lemma}\label{lem:from_negligible_to_strongly_negligible}
    Let $\seq{S}$ be a negligible sequence in $\strseq{A}$ and $f: \N \to \N$ be a non-decreasing unbounded function.
    Then the sequence $\seq{S}' = \seq{S} \setminus V(\strseq{A}^{\bm{f}})$ is strongly-negligible.
\end{lemma}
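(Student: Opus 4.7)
The plan is to verify directly that $\seq{S}'$ satisfies both conditions of Definition~\ref{def:small_and_negligible}: that it is negligible and that each of its $r$-neighborhoods eventually avoids all constants.

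The negligibility of $\seq{S}'$ is essentially free. Since $S_n' \subseteq S_n$ for every $n$, we have the inclusion $N_{\str{A}_n}^r(S_n') \subseteq N_{\str{A}_n}^r(S_n)$ for every $r$, so the assumed negligibility of $\seq{S}$ transfers to $\seq{S}'$:
\[
    \limsup \nu_\strseq{A}(N_\strseq{A}^r(\seq{S}')) \leq \limsup \nu_\strseq{A}(N_\strseq{A}^r(\seq{S})) = 0.
\]

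For the strong part, fix $r \in \N$. Because $f$ is non-decreasing and unbounded, there is $n_0$ such that $f(n) \geq r+1$ for all $n \geq n_0$. For such $n$, any vertex $v \in S_n' = S_n \setminus V(\str{A}_n^{f(n)})$ satisfies $\dist_{\str{A}_n}(v, c) > f(n) \geq r+1$ for every constant $c$ of $\str{A}_n$, since $v$ lies outside the $f(n)$-neighborhood of the roots. By the triangle inequality, any $u \in N_{\str{A}_n}^r(v)$ satisfies
\[
    \dist_{\str{A}_n}(u, c) \geq \dist_{\str{A}_n}(v, c) - \dist_{\str{A}_n}(v, u) > f(n) - r \geq 1,
\]
so $u$ cannot be a constant. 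Hence $N_{\str{A}_n}^r(S_n')$ avoids every constant of $\str{A}_n$ for all $n \geq n_0$, which is the required strong-negligibility condition.

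The proof is entirely routine; the only point worth emphasizing is that one must use the unboundedness of $f$ (monotonicity alone would not suffice) to guarantee that, for each fixed radius $r$, eventually the slack $f(n) - r$ becomes positive. There is no genuine obstacle here.
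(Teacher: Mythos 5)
Your proof is correct and follows the same route as the paper's (which is stated in one sentence): negligibility of $\seq{S}'$ is inherited by inclusion $\seq{S}' \subseteq \seq{S}$, and the strong condition holds because for each fixed $r$ the monotone unbounded $f$ eventually exceeds $r$, so removing $V(\strseq{A}^{\bm{f}})$ pushes $\seq{S}'$ (and hence its $r$-neighborhood, by the triangle inequality) away from all constants. Your write-up merely makes the paper's terse argument explicit; there is nothing to add.
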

\begin{proof}
    The sequence $\seq{S}' \subseteq \seq{S}$ is obviously negligible.
    Moreover, it is strongly-negligible as we actively remove from the sequence the neighborhood of all constants of (eventually) arbitrarily large radius.
\end{proof}

We usually want to choose a slowly growing function $f$, otherwise it may happen that the sequence $\seq{S}'$ is a sequence of empty sets.

We state two standard facts about disjoint unions of local convergent sequences.
All the structures are $\lambda$-structures for a purely relational language $\lambda$.

\begin{fact}[\cite{clustering}, by Corollary 3]\label{fact:union_of_convergent_sequences}
    Let $\strseq{A}$ be a \emph{stable} disjoint union of local convergent sequences $\strseq{A}_1, \dots, \strseq{A}_n$ in the sense that for each $i \in [n]$ the limit $c_i$ of $\nu_\strseq{A}(V(\strseq{A}_i))$ exists.
    Then $\strseq{A}$ is local convergent.
\end{fact}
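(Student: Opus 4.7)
Fix a local formula $\phi(\tpl{x}) \in \FOloc{p}$, say $r$-local, and consider $\stonepar{\phi}{\str{A}_n}$. The plan is to stratify a uniformly random $p$-tuple $\tpl{a}$ of $\str{A}_n$ by the \emph{splitting pattern} $f: [p] \to [n]$ that records, for each coordinate $i$, the component $\str{A}^{f(i)}_n$ containing $a_i$. Each splitting pattern occurs with probability $\prod_{i \in [p]} \nu_{\str{A}_n}(V(\str{A}^{f(i)}_n))$, which converges to $\prod_i c_{f(i)}$ by the stability assumption. Hence it suffices to show that conditional on a fixed splitting pattern $f$, the probability $\stonepar{\phi}{\str{A}_n}_f$ of $\phi$ being satisfied converges as well.

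The key observation is that since the components $\str{A}^{1}_n, \dots, \str{A}^{n}_n$ are pairwise disjoint and share no Gaifman edges, the $r$-neighborhoods of coordinates living in distinct components are automatically disjoint in $\str{A}_n$. Consequently, the $r$-local formula $\phi$ evaluated on $\tpl{a}$ depends only on the isomorphism types of the rooted $r$-neighborhoods of the sub-tuples $\tpl{a}|_{f^{-1}(j)}$ inside the respective components $\str{A}^{j}_n$. A standard Feferman--Vaught style argument (or, equivalently, a refinement of the $\EF$-game argument used in Lemma~\ref{lem:continuity_bound_via_ef_games}) then lets us expand $\phi$, given $f$, as a finite positive Boolean combination of formulas of the form $\bigland_{j \in [n]} \psi_j(\tpl{y}_j)$, where each $\psi_j$ is an $r$-local formula on $\str{A}^{j}_n$ with free variables indexed by $f^{-1}(j)$.

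Given this decomposition, the conditional probability factorizes:
\[
    \stonepar{\phi}{\str{A}_n}_f = \sum_{\tau} \prod_{j \in [n]} \stonepar{\psi_j^\tau}{\str{A}^{j}_n},
\]
where the outer sum is finite (ranging over the terms of the Boolean expansion indexed by some $\tau$) and each factor is a Stone pairing of a local formula against the component $\str{A}^{j}_n$. Every such factor converges by the $\FOloc{}$-convergence of $\strseq{A}_j$, so the finite sum and product converges as well. Summing over the finitely many splitting patterns $f$ with the corresponding weights $\prod_i c_{f(i)}$ gives convergence of $\stonepar{\phi}{\strseq{A}}$, which is what we wanted.

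The only delicate point is the Boolean expansion: one must verify that the local type of the whole tuple under the given split is determined by the tuple of local types in the components, uniformly in $n$. This is exactly where the disjointness of the neighborhoods (hence of the Gaifman graph) across components is used; once it is in place, the rest is a finite-sum manipulation, and the statement follows from \cite{clustering} as cited.
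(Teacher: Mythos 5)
Your argument is essentially correct, but note that the paper does not prove this statement at all: it is recorded as a Fact imported from \cite{clustering} (their Corollary~3), so there is no internal proof to compare against. What you give is a self-contained, elementary proof: stratify a uniform $p$-tuple by the splitting pattern recording which component each coordinate lands in (the pattern probabilities converge by stability, since the coordinates are independent and uniform), and, conditionally on a pattern, use the disjointness of the components to reduce an $r$-local formula to the rooted $r$-neighborhood types of the sub-tuples in the individual components via a Feferman--Vaught/EF-game decomposition; the conditional probability then factors into Stone pairings against the components, each convergent by hypothesis. This buys independence from the clustering machinery, whereas the paper simply defers to \cite{clustering}, where the statement arises as a consequence of their general treatment of marked component decompositions. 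Two points should be tightened for the sketch to be airtight: for the displayed identity $\stonepar{\phi}{\str{A}_n}_f = \sum_{\tau} \prod_{j} \stonepar{\psi_j^\tau}{\str{A}^{j}_n}$ you need the terms indexed by $\tau$ to be mutually exclusive, so take $\tau$ to range over tuples of \emph{complete} rank-$\qrank{\phi}$ local types of the rooted $r$-balls (an arbitrary positive Boolean combination would require inclusion--exclusion), and the type-describing formulas must be relativized to the $r$-balls of their free variables so that each $\psi_j^\tau$ is itself a local formula in the language of the components (this raises the quantifier rank, which is harmless since local convergence of $\strseq{A}_j$ covers all local formulas). Finally, patterns whose limit probability is $0$ need no conditional convergence at all, since their contribution is bounded by the pattern probability and hence tends to $0$; with these repairs your proof is complete.
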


\begin{fact}[\cite{modeling_limits_in_hereditary_classes}, by Lemma 17]\label{fact:union_of_infinitely_many_copies}
    Let $\strseq{A}$ be $\FOloc{1}$-convergent sequence and $f: \N \to \N$ be a function with $f \to \infty$.
    Suppose that $\str{B}_n$ is the disjoint union of $f(n)$ copies of $\str{A}_n$.
    Then the sequence $\strseq{B}$ is local convergent.
\end{fact}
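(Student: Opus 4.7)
The plan is to show that for any $r$-local formula $\phi \in \FOloc{p}$ with $\qrank{\phi} \le k$, the sequence $\stonepar{\phi}{\strseq{B}}$ converges. The main intuition is that when we pick a uniformly random $p$-tuple $\tpl{b}=(b_1,\dots,b_p)$ from $\str{B}_n$, which is the disjoint union of $f(n)$ copies of $\str{A}_n$, the probability that two coordinates land in the same copy is bounded by $p^2/f(n)\to 0$. Hence asymptotically almost surely, the vertices of $\tpl{b}$ lie in pairwise distinct copies and are therefore at pairwise distance $+\infty$ in $\str{B}_n$.

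The key structural step is a decomposition of $r$-local formulas on far-apart tuples. When $b_1,\dots,b_p$ lie in distinct components, the $r$-neighborhood of $\tpl{b}$ in $\str{B}_n$ is the disjoint union of the rooted neighborhoods $(\str{B}_n, b_i)^r$, each of which is isomorphic to a rooted $r$-neighborhood in $\str{A}_n$. Since the isomorphism type of a disjoint union of pointed structures is determined by the multiset of types of the summands, there are only finitely many $\equiv_k$-types of rooted neighborhoods $(\str{A}_n, a)^r$, and each such type is defined by some formula $\tau(x)\in\FOloc{1}$. Therefore, on the event that all $b_i$'s are in distinct copies, $\phi(\tpl{b})$ becomes a boolean combination over $i \in [p]$ and types $\tau$ of single-variable formulas $\tau(b_i)\in\FOloc{1}$.

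The final step is to compute the limit. Because the coordinates $b_1,\dots,b_p$ are chosen independently and uniformly from $V(\str{B}_n)$, and for any $\psi\in\FOloc{1}$ we have $\stonepar{\psi}{\str{B}_n}=\stonepar{\psi}{\str{A}_n}$ (the local neighborhood is the same regardless of which copy one is in), the $\FOloc{1}$-convergence of $\strseq{A}$ implies that the probability of any single-variable local condition on $b_i$ converges. By independence of the coordinates, the probability of any fixed boolean combination of such conditions across $i\in[p]$ converges as well. Combining this with the fact that the ``bad'' event of some two coordinates sharing a copy has vanishing probability, we conclude that $\stonepar{\phi}{\str{B}_n}$ converges.

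The potentially delicate step is the decomposition of $\phi$ on far-apart tuples: one must verify that for fixed quantifier rank $k$ there are finitely many relevant $\equiv_k$-types of rooted $r$-neighborhoods, each definable by an $\FOloc{1}$ formula of quantifier rank at most $k$, so that the single-variable formulas we invoke do in fact live inside $\FOloc{1}$ (which is all that is assumed about $\strseq{A}$). Once this finiteness and definability are in place, the rest reduces to the straightforward probability-conditioning argument sketched above.
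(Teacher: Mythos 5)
Your proof is correct, but there is nothing in the paper to compare it against: the statement is quoted as a Fact from the literature (Lemma~17 of the cited work on modeling limits in hereditary classes) and no proof is given in the paper, so your proposal supplies a self-contained argument where the authors rely on a citation. The three ingredients of your argument all check out. Since the $f(n)$ copies have equal size and the coordinates of a Stone-pairing tuple are independent and uniform, the probability that some two coordinates land in the same copy is at most $\binom{p}{2}/f(n) \to 0$. On the complementary event, the rooted ball $(\str{B}_n, \tpl{b})^r$ is the disjoint union of the rooted balls $(\str{B}_n, b_i)^r$, each isomorphic to a rooted $r$-ball of $\str{A}_n$, and the standard Ehrenfeucht--Fra\"{i}ss\'{e} (Feferman--Vaught-type) composition for disjoint unions shows that the $\equiv_k$-class of the union, hence the truth of an $r$-local $\phi$ of quantifier rank $k$, is determined by the $\equiv_k$-classes of the components indexed by the roots. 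Each such class of rooted $r$-balls is, after relativizing the class-defining formula to the $r$-neighborhood of the root, captured by a formula $\tau \in \FOloc{1}$; its Stone pairing is the same in $\str{B}_n$ as in $\str{A}_n$ because the ball of any vertex of $\str{B}_n$ lies inside its own copy, so it converges by hypothesis, and independence of the coordinates turns the probability of each admissible type tuple into a convergent product, with the exceptional event contributing $o(1)$. Two small inaccuracies, neither harmful: the relativized type formulas need not have quantifier rank at most $k$ (expressing distance at most $r$ costs additional quantifiers), but this is immaterial since $\FOloc{1}$-convergence of $\strseq{A}$ is assumed for local formulas of arbitrary quantifier rank, which is all you use; and ``multiset of types'' should strictly be the tuple of types indexed by the roots, which is how you actually use it in the boolean combination.
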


Let us follow with the main result of this part.
Recall that $\partial_\str{A} S$ stands for the set $N_\str{A}(S) \setminus S$.

\begin{theorem}\label{thm:fo_small_roots}
    Let $\strseq{A}$ be a local convergent sequence of base structures and $\strseq{G}$ be a constant-local convergent sequence of gadgets satisfying
    \begin{enumerate}[(i)]
        \item the sequence $\tpl{z}^\strseq{G}$ of roots is negligible in $\strseq{G}$,
        \item the proportion of internal vertices in $\strseq{A}*\strseq{G}$ tends to $c$,
        \item $\lim |R^\strseq{A}|$ exists.
    \end{enumerate}
    Then the sequence $\strseq{A}*\strseq{G}$ is local convergent.
\end{theorem}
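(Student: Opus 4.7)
The plan is to reduce local convergence of $\strseq{A}*\strseq{G}$ to that of a simpler sequence $\strseq{B}$ obtained by splitting each gadget copy into an attached region near its roots and a bulk region far from its roots. The negligibility of roots ensures that the interface between these regions is negligible, so the local statistics are preserved by this surgery.

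First, fix a non-decreasing unbounded $f: \N \to \N$ such that $N^{f(n)}_{\str{G}_n}(\tpl{z}^{\str{G}_n})$ remains negligible in $\strseq{G}$ (such $f$ exists by diagonalization on the negligibility of $r$-neighborhoods of every fixed radius). Define $\str{B}_n$ from $\str{A}_n*\str{G}_n$ by deleting every edge that has a vertex in both the attached region (internal vertices together with vertices of each gadget copy at distance $\le f(n)$ from its roots) and the bulk region (the remaining external vertices). Then $\str{B}_n$ is a disjoint union of an attached part $\str{C}_n$, naturally isomorphic to $\str{A}_n * \str{G}^{f(n)}_n$ where $\str{G}^{f(n)}_n$ denotes $\str{G}_n$ restricted to $N^{f(n)}(\tpl{z})$, and a bulk part $\str{E}_n$ consisting of $|R^{\str{A}_n}|$ disjoint truncated gadget copies. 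The deleted edges are incident only to vertices in a thin annulus around distance $f(n)$ from roots, whose relative size in $\strseq{G}$ is negligible by the choice of $f$. Hence $\strseq{A}*\strseq{G}$ and $\strseq{B}$ give the same Stone pairing for every fixed local formula up to vanishing error, so the two sequences are equivalent.

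Second, apply Fact \ref{fact:union_of_convergent_sequences} to $\strseq{B} = \strseq{C} \sqcup \strseq{E}$. The mixing ratio converges by the hypotheses on $c$ and $|R^\strseq{A}|$. The bulk $\strseq{E}$ is equivalent to a disjoint union of $|R^{\str{A}_n}|$ full copies of $\strseq{G}|_L$, hence locally convergent: a stable union of finitely many copies if $|R^\strseq{A}|$ has finite limit, and Fact \ref{fact:union_of_infinitely_many_copies} otherwise, using the $\FOloc{1}$-convergence of $\strseq{G}|_L$ inherited from the constant-local convergence of $\strseq{G}$. For the attached part $\strseq{C}$, the sequence $\strseq{G}^{f(n)}$ is $\FOcloc{0}$-convergent because any fixed $r$-constant-local sentence has the same truth value on $\str{G}^{f(n)}_n$ as on $\str{G}_n$ once $f(n) > r$; and when $c > 0$ the internal proportion in $\strseq{C}$ tends to $1$ (from $|V(\strseq{G}^f)| = o(|V(\strseq{G})|)$ together with $|R^\strseq{A}| \cdot |V(\strseq{G})| = O(|V(\strseq{A})|)$), so Theorem \ref{thm:fo_dominant_internal_vertices} yields local convergence of $\strseq{C}$. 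When $c = 0$ the vertex set $V(\strseq{C})$ is itself a negligible sequence within $\strseq{B}$, so $\strseq{E}$ alone determines the limit statistics.

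The main difficulty lies in the equivalence step: one must verify that the deleted edges, together with any boundary phenomena, contribute only vanishing error to the Stone pairings. This relies on $f$ being slow enough that $N^{f(n)+r}_{\str{G}_n}(\tpl{z}^{\str{G}_n})$ is negligible for every fixed $r$, so that the interface between attached and bulk remains thin at every finite scale.
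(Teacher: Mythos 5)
Your proof is correct and follows essentially the same route as the paper: choose a slowly growing cutoff $f$ by diagonalization, split each gadget copy into the region near its roots and the bulk, argue that this surgery is invisible to local formulas, and then handle the attached part $\strseq{A}*\strseq{G}^{\bm{f}}$ via Theorem~\ref{thm:fo_dominant_internal_vertices} and the bulk via Facts~\ref{fact:union_of_convergent_sequences} and~\ref{fact:union_of_infinitely_many_copies}, with the same case split on $c$ and on $\lim |R^{\strseq{A}}|$. The only cosmetic difference is that the paper removes the strongly-negligible boundary vertices $\partial_\strseq{G} V(\strseq{G}^{\bm{f}})$ and invokes Lemmas~\ref{lem:from_negligible_to_strongly_negligible} and~\ref{lem:negligible_sequences_and_equivalent_structures}, whereas you delete the crossing edges and verify directly that the Stone pairings change only by a vanishing error.
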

\begin{proof}
    Fix a function $f:\N \to \N$ satisfying that the sequence $V(\strseq{G}^{\bm{f}})$ is negligible in $\strseq{G}$.
    For example, define $f,g: \N \to \N$ as follows:
    \begin{align*}
        g(r) &= \min 
        \left\lbrace 
            n \;\middle|\; \forall m \geq n : \nu_{\str{G}_m}(V(\str{G}_m^r)) < 2^{-r}
        \right\rbrace
        , \\
        f(n) &= \frac{1}{2} \min\{ 
        \sup 
        \left\lbrace 
            r \;\middle|\; g(r) < n
        \right\rbrace
        ,
        n
        \}
        .
    \end{align*}
    The non-decreasing function $g$ is well defined as $\tpl{z}^\strseq{G}$ is a negligible set in $\strseq{G}$, the function $f$ is non-decreasing and unbounded.
    Observe that the sequence $V(\strseq{G}^{\bm{f}})$ is negligible in $\strseq{G}$: for $r \in \N$ we eventually have $V(\strseq{G}^{\bm{f}+r}) \subseteq V(\strseq{G}^{2\bm{f}})$ and $\nu_\strseq{G}(V(\strseq{G}^{2\bm{f}})) \to 0$ by the choice of $f$.
    
    Set $\seq{S} = \partial_\strseq{G} V(\strseq{G}^{\bm{f}})$ and $\strseq{G}' = \strseq{G} \setminus \seq{S}$.
    The sequence $\seq{S}$ is strongly negligible by Lemma~\ref{lem:from_negligible_to_strongly_negligible}.
    Therefore, by Lemma~\ref{lem:negligible_sequences_and_equivalent_structures}, we only need to prove local convergence of the sequence $\strseq{A}*\strseq{G}'$.
    To do so, we decompose the sequence into a stable disjoint union of sequences $\strseq{B}$ and $\strseq{C}$ and use Fact~\ref{fact:union_of_convergent_sequences}.
    
    Our choice of $\seq{S}$ decomposes $\strseq{G}'$ into the stable disjoint union of a sequence of gadgets $\strseq{H} = \strseq{G}^{\bm{f}}$ and a sequence of $L$-structures $\strseq{K} = \strseq{G}'_n \setminus V(\str{H}_n)$.
    We write $\strseq{B}$ for the sequence $\strseq{A}*\strseq{H}$ and $\strseq{C}$ for the sequence of disjoint unions of $|R^\strseq{A}|$ copies of $\strseq{K}$.
    Observe that their proportion is stable and follows the proportion of internal vertices in $\strseq{A}*\strseq{G}$, i.e. $\lim \nu_{\strseq{A}*\strseq{G}'}(V(\strseq{C})) = 1 - c$ as $\strseq{C}$ contain the dominant portion of external vertices from $\strseq{A}*\strseq{G}'$.
    
    If $c = 0$, it is enough to prove local convergence of $\strseq{C}$ ($\strseq{B}$ is negligible in $\strseq{A}*\strseq{G}'$).
    That follows from Fact~\ref{fact:union_of_convergent_sequences}~or~\ref{fact:union_of_infinitely_many_copies} as $\strseq{C}$ the disjoint union of $|R^\strseq{A}|$ copies of a local convergent sequence (depending whether $\lim |R^\strseq{A}|$ is finite or not).
    
    If $c > 0$, we need to additionally prove local convergence of $\strseq{B}$.
    Observe that sequence of gadgets $\strseq{H}$ is $\FOcloc{0}$-convergent as $f$ is non-decreasing (in fact, it suffices that $f$ has a limit).
    Moreover, the proportion of internal vertices in $\strseq{B} = \strseq{A}*\strseq{H}$ tends to $1$ (only the internal vertices may account for $\lim \nu_{\strseq{A}*\strseq{G}'}(V(\strseq{B})) = c$).
    Therefore, the local convergence of $\strseq{B}$ follows from Theorem~\ref{thm:fo_dominant_internal_vertices}.
\end{proof}

In certain cases, we may omit some assumptions.
If $c = 0$, we do not need the convergence of $\strseq{A}$.
If $\lim |R^\strseq{A}| = \infty$, it is enough to assume $\FOcloc{1}$-convergence of $\strseq{G}$, which implies $\FOloc{}$-convergence of the sequence $\strseq{C}$ by Fact~\ref{fact:union_of_infinitely_many_copies}.

\subsection{Extension to fragmented structures}\label{ssec:extension_to_fragmented_structures}

Here we combine the previous approaches for obtaining local convergence with the idea of fragmentation from Section~\ref{ssec:fragmentation}.
We consider the sequence $\strseq{A}^\sigma$, where we fragment the $R$-edges into subedges according to limit distances of roots in $\strseq{G}$.
We define \emph{tip} of $\strseq{G}$ as the sequence of roots from a single class of $\sigma$.
We distinguish \emph{light} tips that form a negligible sequence in $\strseq{G}$ and \emph{heavy} tips that do not.
Using Theorem~\ref{thm:local_multiple_gadgets}, we show that a sufficient assumption for local convergence of $\strseq{A}*\strseq{G}$ is convergence of conditional probabilities, but \emph{only} those where we condition on selection of subedges corresponding to heavy tips.

Fix a sequence $\strseq{A}$ of base structures and a constant-local convergent sequence $\strseq{G}$ of gadgets for the rest of the section.
Let $\sigma$ be the equivalence on $[\arity{R}]$ from~\eqref{eq:canonical_sigma} with classes $X_0, \dots, X_\ell$, where $X_0$ is the empty class.
(Note that constant-local sentences are sufficient for the definition of $\sigma$.)

\begin{definition}[Tips]
    Let $X$ be a class of $\sigma$.
    We call the sequence $\tpl{z}^\strseq{G}_X = \{z^\strseq{G}_i, i \in X\}$ a \emph{tip} of $\strseq{G}$.
    For $X \not= X_0$, the tip $\tpl{z}^\strseq{G}_X$ is \emph{light} if $\tpl{z}^\strseq{G}_X$ is a negligible sequence in $\strseq{G}$.
    Otherwise, we say that the tip is \emph{heavy}.
\end{definition}

We show that there is a decomposition of $\strseq{G}$ into a strongly-negligible sequence $\seq{S}$ and a disjoint union of sequences $\strseq{G}^{(0)}, \dots, \strseq{G}^{(\ell)}$ with the following properties for each $i \in [\ell]_0$:
\begin{enumerate}[(i)]
    \item the sequence $\strseq{G}^{(i)}$ eventually contains the tip $\tpl{z}^\strseq{G}_{X_i}$,
    \item the limit of $\nu_\strseq{G}(V(\strseq{G}^{(i)}))$ exists and is $0$ if $\tpl{z}^\strseq{G}_{X_i}$ is a light tip,
    \item if $\lim \nu_\strseq{G}(V(\strseq{G}^{(i)})) > 0$, the sequence $\strseq{G}^{(i)}$ is constant-local convergent.
\end{enumerate}
We call such a decomposition of $\strseq{G}$ a \emph{good clustering} of $\strseq{G}$.
Note that the first condition together with strong-negligibility of $\seq{S}$ implies that arbitrarily large neighborhood $N^r_\strseq{G}(\tpl{z}^\strseq{G}_{X_i})$ eventually lies in $\strseq{G}^{(i)}$.
Moreover, the strong-negligibility implies $\FOcloc{0}$-convergence of $\strseq{G}^{(i)}$ from the third condition; hence, only local convergence needs to be proved.

\begin{lemma}\label{lem:existence_of_good_clustering}
    There exists a good clustering of $\strseq{G}$.
\end{lemma}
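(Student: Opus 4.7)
The plan is to define $\strseq{G}^{(i)}$ for $i \in [\ell]$ as essentially the $f(n)$-neighborhood of the tip $\tpl{z}^{\str{G}_n}_{X_i}$ for a slowly-growing function $f \colon \N \to \N$, while $\seq{S}$ serves as a thin separating shell and $\strseq{G}^{(0)}$ collects the remainder. By constant-local convergence of $\strseq{G}$, for every fixed $r \in \N$ and every $i \in [\ell]$ the density $\nu_{\str{G}_n}(N_{\str{G}_n}^r(\tpl{z}^{\str{G}_n}_{X_i}))$ converges to a value $\alpha_i(r)$, as it is the Stone pairing of a one-variable constant-local formula. The map $r \mapsto \alpha_i(r)$ is non-decreasing and bounded, hence has a supremum $\alpha_i^*$; this supremum equals $0$ exactly when $X_i$ is light.

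Next, I would pick $f(n) \to \infty$ by a standard diagonal argument so that the following hold simultaneously: (a) for every pair $(i,j) \notin \sigma$, eventually $\dist_{\str{G}_n}(z_i^{\str{G}_n}, z_j^{\str{G}_n}) > 3 f(n)$, which is possible because these distances tend to infinity by the very definition of $\sigma = \Eq(\strseq{G})$; (b) $\nu_{\str{G}_n}(N_{\str{G}_n}^{f(n)}(\tpl{z}^{\str{G}_n}_{X_i})) \to \alpha_i^*$ for every $i \in [\ell]$, possible since $\alpha_i(r) \nearrow \alpha_i^*$; and (c) $\nu_{\str{G}_n}(N_{\str{G}_n}^{f(n)+1}(\tpl{z}^{\str{G}_n}_{X_i}) \setminus N_{\str{G}_n}^{f(n)}(\tpl{z}^{\str{G}_n}_{X_i})) \to 0$ for every $i \in [\ell]$, forced by (b) together with sufficiently slow growth of $f$. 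Then set $V(\strseq{G}^{(i)}) = N_{\str{G}_n}^{f(n)}(\tpl{z}^{\str{G}_n}_{X_i})$ for $i \in [\ell]$, $\seq{S}_n = \bigcup_{i \in [\ell]} \partial_{\str{G}_n} V(\strseq{G}^{(i)})$, and $V(\strseq{G}^{(0)}) = V(\str{G}_n) \setminus \seq{S}_n \setminus \bigcup_{i \in [\ell]} V(\strseq{G}^{(i)})$.

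Conditions (a) and (b) immediately yield disjointness of the pieces and the required density limits (with light tips giving density $0$), while (c) gives negligibility of $\seq{S}$. Strong negligibility holds because each point of $\seq{S}$ lies at distance at least $f(n)+1$ from every tip while $f(n) \to \infty$. The $\FOcloc{0}$-convergence of each $\strseq{G}^{(i)}$ is then automatic from strong negligibility, as already observed in the text preceding the lemma, so only local convergence of $\strseq{G}^{(i)}$ with $\alpha_i^* > 0$ remains to be proved.

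That is the main obstacle. For an $r$-local formula $\phi$ I would split $\stonepar{\phi}{\strseq{G}^{(i)}}$ into contributions from $p$-tuples whose coordinates all lie at distance greater than $r$ from $\partial_{\str{G}_n} V(\strseq{G}^{(i)})$ (the deep tuples) and from the remaining shallow tuples. The shallow contribution vanishes because the inner layer $N_{\str{G}_n}^{f(n)}(\tpl{z}^{\str{G}_n}_{X_i}) \setminus N_{\str{G}_n}^{f(n)-r}(\tpl{z}^{\str{G}_n}_{X_i})$ has density tending to $\alpha_i^* - \alpha_i^* = 0$ by the monotone convergence of $\alpha_i$. For a deep tuple, the $r$-neighborhood in $\strseq{G}^{(i)}$ coincides with the $r$-neighborhood in $\strseq{G}$, so the truth value of $\phi$ is preserved; approximating ``deep'' by fixed-radius neighborhoods of the tip and performing one further diagonalization, the deep contribution reduces to a limit of conditional constant-local statistics of $\strseq{G}$ (on tuples conditioned to lie close to the tip), all of which converge by the constant-local convergence of $\strseq{G}$.
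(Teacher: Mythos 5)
Your route is genuinely different from the paper's. The paper does not build the pieces by hand: it invokes the globular clustering theorem (Theorem~\ref{thm:clustering}) for the local convergent sequence $\strseq{G}$, shows in Lemma~\ref{lem:default_clustering_of_gadgets} that every heavy tip, together with all its fixed-radius neighborhoods, eventually lies in its own globular cluster, and then merely re-marks a slowly growing $f$-neighborhood of each \emph{light} tip; local convergence of the positive-density pieces is inherited from the cluster machinery rather than proved directly. You instead define $V(\str{G}^{(i)}_n)=N^{f(n)}_{\str{G}_n}(\tpl{z}^{\str{G}_n}_{X_i})$ outright and prove local convergence of the heavy pieces by a sandwich argument: for fixed $R$ the event ``all coordinates lie in $N^R(\tpl{z}_{X_i})$ and $\phi$ holds'' is constant-local, so its Stone pairing in $\strseq{G}$ converges; these limits are monotone in $R$; and the error made by replacing radius $f(n)$ by radius $R$ is at most $p\,\bigl(\nu_{\str{G}_n}(N^{f(n)}(\tpl{z}_{X_i}))-\nu_{\str{G}_n}(N^{R}(\tpl{z}_{X_i}))\bigr)\to p\,(\alpha_i^*-\alpha_i(R))$, which is small for large $R$. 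This works, and it is more elementary and self-contained (no appeal to the external clustering theorem), at the price of redoing by hand what that machinery supplies; do spell out the two-sided estimate above, since ``one further diagonalization'' is where the actual content sits.

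Two places where the write-up does not yet deliver the full statement. First, a good clustering requires constant-local convergence of \emph{every} piece of positive limit density, indexed by $[\ell]_0$; your argument treats only the tip pieces $i\in[\ell]$ (the $\alpha_i^*$ are defined only for tips), while the remainder $\strseq{G}^{(0)}$ may well carry positive mass (gadgets whose bulk is far from all roots) and must be handled too. Your own method covers it: approximate ``far from all tips'' by ``in no $N^R(\tpl{z}_{X_i})$'', again a constant-local condition, with replacement error at most $p\sum_{i}(\alpha_i^*-\alpha_i(R))$ --- but this needs to be said. Second, negligibility of $\seq{S}$ means $\nu_\strseq{G}(N^r_\strseq{G}(\seq{S}))\to 0$ for \emph{every} fixed $r$, i.e. $\nu_{\str{G}_n}(N^{f(n)+r+1}(\tpl{z}_{X_i}))-\nu_{\str{G}_n}(N^{f(n)-r}(\tpl{z}_{X_i}))\to 0$ for all $r$; your condition (c) only records the $r=1$ shell. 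This is repaired by the same diagonalization if you require, as in the proof of Theorem~\ref{thm:fo_small_roots}, that e.g. $\nu_{\str{G}_n}(N^{2f(n)}(\tpl{z}_{X_i}))\to\alpha_i^*$; note the lower bound $\liminf \nu_{\str{G}_n}(N^{f(n)-r}(\tpl{z}_{X_i}))\ge\alpha_i^*$ is automatic once $f\to\infty$. With these two repairs (and your correct reduction of the $\FOcloc{0}$ part to strong negligibility of $\seq{S}$ plus the fact that fixed neighborhoods of each tip eventually lie in its piece), the proof is complete.
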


We leave the proof of this key statement for Section~\ref{sssec:proof_of_good_clustering}, we first show how to use the decomposition.
Write $\strseq{G}'$ for the sequence $\strseq{G} \setminus \seq{S}$, i.e. the union of all $\strseq{G}^{(i)}$.
By Lemma~\ref{lem:negligible_sequences_and_equivalent_structures}, it is enough to prove local convergence of $\strseq{A}*\strseq{G}'$ to obtain local convergence of $\strseq{A}*\strseq{G}$.

Write $\strseq{H}^{(j)}$ for the sequence of gadgets $\strseq{G}^{(j)}$ where we add an \emph{auxiliary} root, an isolated vertex, to each structure.
Observe that $\strseq{B} = \strseq{A}^\sigma*\strseq{H}^{(0)}*\dots*\strseq{H}^{(\ell)}$ is isomorphic to the union of the structures $\strseq{A}*\strseq{G}'$ and a sequence of independent sets $\seq{I}$ of size $(\ell+1)|R^\strseq{A}|$.
Provided that the gadgets $\strseq{G}$ eventually contain at least one non-root, which is the non-trivial case, we have $\lim \nu_\strseq{B}(\seq{I}) < 1$.
Thus, removal of the sequence $\seq{I}$ does not harm the local convergence.
Consequently, obtaining local convergence of the sequence $\strseq{A}*\strseq{G}$ reduces to obtaining local convergence of the sequence $\strseq{A}^\sigma*\strseq{H}^{(0)}*\dots*\strseq{H}^{(\ell)}$.
Sufficient conditions are found in the statement of Theorem~\ref{thm:local_multiple_gadgets}.

As a result, we have the following corollary.
Note that a multi-profile $\pi = (I, \mathcal{E}^1, \dots, \mathcal{E}^\ell)$ with $|\mathcal{E}^j| > 0$ for any light tip $\tpl{z}^\strseq{G}_{X_j}$ is trivial as $\nu_\strseq{G}(V(\strseq{G}^{(j)})) = 0$.

\begin{corollary}\label{cor:loc_fragmented_structures} 
    Let $\strseq{A}$ be a sequence of base structures and $\strseq{G}$ be a constant-local convergent sequence of gadgets inducing an equivalence $\sigma$ on $[\arity{R}]$ satisfying
    \begin{enumerate}[(i)]
        \item for every $p \in \N$ and a multi-profile $\pi = (I, \mathcal{E}^1, \dots, \mathcal{E}^\ell)$ of a $p$-tuple that is non-trivial w.r.t. $\strseq{A}^\sigma*\strseq{H}^{(0)}*\dots*\strseq{H}^{(\ell)}$ holds that for each $\phi$ with $p$ blocks the sequence $\stonepar{\phi|_\pi}{\strseq{A}^\sigma}$ converges,
        \item the proportion of internal vertices in $\strseq{A}*\strseq{G}$ tends to a limit.
    \end{enumerate}
    Then the sequence $\strseq{A}*\strseq{G}$ is local convergent.
\end{corollary}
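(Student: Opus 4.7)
My plan is to follow the reduction strategy already sketched in the two paragraphs immediately preceding the corollary statement and then discharge the remaining work by invoking Theorem~\ref{thm:local_multiple_gadgets}.

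I would begin by applying Lemma~\ref{lem:existence_of_good_clustering} to obtain a good clustering of $\strseq{G}$: a strongly-negligible sequence $\seq{S}$ and sequences $\strseq{G}^{(0)}, \dots, \strseq{G}^{(\ell)}$ enjoying properties (i)--(iii) of a good clustering. Put $\strseq{G}' = \strseq{G} \setminus \seq{S}$; Lemma~\ref{lem:negligible_sequences_and_equivalent_structures} then makes $\strseq{A}*\strseq{G}'$ equivalent to $\strseq{A}*\strseq{G}$, so it is enough to establish local convergence of $\strseq{A}*\strseq{G}'$. If the gadgets $\strseq{G}$ are eventually free of non-roots the statement is trivial, so I assume otherwise in what follows.

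Next, I would attach an auxiliary isolated vertex as an extra root to each $\strseq{G}^{(j)}$, producing the gadget sequences $\strseq{H}^{(j)}$, and form $\strseq{B} = \strseq{A}^\sigma * \strseq{H}^{(0)} * \dots * \strseq{H}^{(\ell)}$. As discussed in the paragraph preceding the corollary, $\strseq{B}$ is isomorphic to the disjoint union of $\strseq{A}*\strseq{G}'$ with a sequence $\seq{I}$ of independent sets of size $(\ell+1)|R^\strseq{A}|$ (the isolated auxiliary roots of $\strseq{H}^{(j)}$ get identified with the auxiliary vertices of the fragmentation). Since we are in the non-trivial case, $\lim \nu_\strseq{B}(\seq{I}) < 1$, so Fact~\ref{fact:union_of_convergent_sequences} lets us transfer local convergence from $\strseq{B}$ to $\strseq{A}*\strseq{G}'$; it therefore remains to prove that $\strseq{B}$ is $\FOloc{}$-convergent.

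For this I apply Theorem~\ref{thm:local_multiple_gadgets} to $\strseq{B}$, which requires verification of its three hypotheses. Hypothesis (i) is exactly assumption (i) of the corollary. Hypothesis (ii), the $\FOcloc{m_j}$-convergence of each $\strseq{H}^{(j)}$, follows from property (iii) of the good clustering whenever the corresponding cluster is non-negligible, while clusters with $\lim \nu_\strseq{G}(V(\strseq{G}^{(j)})) = 0$ are handled by property (ii) together with the strong-negligibility of $\seq{S}$ (attaching an isolated auxiliary root plainly preserves constant-local convergence, and the removed boundary keeps the neighborhoods of the tip inside $\strseq{G}^{(j)}$). Hypothesis (iii), the existence of the limit proportion of internal and of each block of external vertices in $\strseq{B}$, combines assumption (ii) of the corollary with property (ii) of the good clustering.

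The main point that needs care is the disjoint-union identification of $\strseq{B}$ with $\strseq{A}*\strseq{G}' \sqcup \seq{I}$: one has to match the auxiliary vertices added by the fragmentation $\str{A}^\sigma$ with the isolated auxiliary roots of the $\strseq{H}^{(j)}$ and check that doing so does not create spurious edges, and one has to verify that the small discrepancies between $\strseq{G}$ and the union of $\strseq{G}^{(j)}$ (i.e.\ $\seq{S}$) really are strongly-negligible so that Lemma~\ref{lem:negligible_sequences_and_equivalent_structures} applies. Once this bookkeeping is in place, every further step is a direct appeal to machinery already established in Sections~\ref{ssec:fragmentation} and~\ref{ssec:avoiding_obstacles}.
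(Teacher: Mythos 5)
Your proposal is correct and follows essentially the same route as the paper: the paper's argument is exactly the reduction spelled out before the corollary (good clustering via Lemma~\ref{lem:existence_of_good_clustering}, passing to $\strseq{A}*\strseq{G}'$ via Lemma~\ref{lem:negligible_sequences_and_equivalent_structures}, identifying $\strseq{B}=\strseq{A}^\sigma*\strseq{H}^{(0)}*\dots*\strseq{H}^{(\ell)}$ with $\strseq{A}*\strseq{G}'$ plus the independent sets $\seq{I}$, and invoking Theorem~\ref{thm:local_multiple_gadgets}), with light tips yielding only trivial multi-profiles so that $\FOcloc{0}$-convergence from strong-negligibility suffices there. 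Your appeal to Fact~\ref{fact:union_of_convergent_sequences} for discarding $\seq{I}$ is stated in the converse direction of that fact, but since $\seq{I}$ consists of isolated vertices of stable proportion bounded away from $1$, this is the same (unelaborated) step the paper takes.
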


\subsubsection{Proof of Lemma~\ref{lem:existence_of_good_clustering}}\label{sssec:proof_of_good_clustering}

We construct the decomposition in two steps.
First, we use the result form \cite{clustering} to find an initial negligible sequence $\seq{Y}$ that works well with heavy tips.
Then, we modify it to also accommodate the light tips.

\paragraph{Clustering in local convergent sequences}\label{par:clustering_in_local_convergent_sequences}

Let us survey the key definitions and results from \cite{clustering} that we are going to use.
Some parts of the text are verbatim transcriptions with only minor modifications.

Let $\lambda$ be a purely relational language.
Let $\strseq{A}$ be a local convergent sequence of $\lambda$-structures and let $\seq{Z}$ be a sequence of subsets of $\strseq{A}$.
We denote by $\lift{\seq{Z}}{\strseq{A}}$ the lift of $\strseq{A}$ obtained by marking all elements of sets $\seq{Z}$ by a new unary symbol.
The sequence $\seq{Z}$ is a \emph{cluster} if $\lift{\seq{Z}}{\strseq{A}}$ is local convergent and the boundary of $\seq{Z}$, the set $\partial_\strseq{A} \seq{Z}$, is a negligible set.
In particular, $\seq{Z}$ is a \emph{globular cluster} if it is not a negligible sequence and for $\strseq{Z}$, the sequence of substructures induced by $\seq{Z}$, we have that for every $\eps > 0$ there is $r \in \N$ such that
\[
    \liminf_{n \to \infty} \sup_{v_n \in \str{Z}_n} \nu_{\str{Z}_n}(N^r_{\str{Z}_n}(v_n)) > 1-\eps
    .
\]
That is, the mass of $\seq{Z}$ is strongly concentrated around a single point.
On the other hand, cluster $\seq{X}$ is \emph{residual} if it contains no point with a positive mass in its neighborhood: 
if for all $r \in \N$ holds
\[
    \limsup_{n \to \infty} \sup_{v_n \in Z_n} \nu_{\str{A}_n}(N^r_{Z_n}(v_n)) = 0
    .
\]
If $\seq{Z}$ is a cluster in $\strseq{A}$, the sequence $\nu_\strseq{A}(\seq{Z})$ tends to a limit.
If the limit is positive, the sequence $\strseq{Z}$ is local convergent.

Let $\strseq{A}$ be a local convergent sequence of $\lambda$-structures.
A lifted sequence $\lift{}{\strseq{A}}$ of $\strseq{A}$ obtained by extending the language $\lambda$ into $\lambda^+$ by adding countably many unary symbols $M_1, M_2, \dots$ is a \emph{clustering} if, denoting
\[
    \seq{Y} = V(\strseq{A}) \setminus \bigcup_i M_i^\strseq{A}
    ,
\]
the following conditions holds:
\begin{enumerate}[(i)]
    \item the sequence $\lift{}{\strseq{A}}$ is local convergent,
    \item the sequence $\seq{Y}$ is negligible and $\bigcup_i \partial_\strseq{A} M_i^\strseq{A} \subseteq \seq{Y}$,
    \item for every $n \in \N$ the non-empty sets among $Y_n, M_1^{\str{A}_n}, M_2^{\str{A}_n}, \dots$ form a partition of $\str{A}_n$.
    \item The partition is stable in the sense that
    \[
        \sum_i \lim \stonepar{M_i}{\strseq{A}} = \lim \sum_i \stonepar{M_i}{\strseq{A}}
        .
    \]
\end{enumerate}

The definition implies that each marked sequence $M_i^\strseq{A}$ is a cluster.

The main result of the paper (stated here in a weaker form) is the following detection of globular clusters.

\begin{theorem}[\cite{clustering}, Theorem~1]\label{thm:clustering}
    Let $\strseq{A}$ be a local-convergent sequence of $\lambda$-structures.
    Then there is an extended language $\lambda^+ = \lambda \cup \{M_R, M_i, i \in \N\}$ and a clustering $\strseq{A}^+$ of $\strseq{A}$ with the following properties:
    \begin{enumerate}[(i)]
        \item for every $i \in \N$ the sequence $M_i^{\strseq{A}}$ is a globular cluster,
        \item $M_R^{\strseq{A}}$ is a residual cluster.
        \item the unmarked vertices form a negligible sequence.
    \end{enumerate}
\end{theorem}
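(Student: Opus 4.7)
The plan is to extract the globular clusters one by one from the limit distribution of rooted neighborhoods in $\strseq{A}$ and collect everything that is left into the residual cluster. For each $r \in \N$ and each finite rooted type $\tau$ of radius $r$, local convergence guarantees that the frequency $f_\tau = \lim_{n} \Pr_{v_n \in V(\str{A}_n)}[(\str{A}_n, v_n)^r \cong \tau]$ exists. I call a consistent chain $A = (\tau_r)_{r \in \N}$, where $\tau_{r+1}$ extends $\tau_r$ and each $f_{\tau_r} > 0$, an \emph{atom} of the local limit; its mass is $\alpha_A = \lim_r f_{\tau_r} > 0$. There are at most countably many such atoms since the sum of their masses is bounded by $1$.

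For each atom $A_i$ of mass $\alpha_i$, I would pick a sequence $v_n^{(i)} \in \str{A}_n$ whose rooted local type matches $\tau_r^{(i)}$ at an increasing radius $r_n \to \infty$ and set $M_i^\strseq{A} = N^{r_n}_\strseq{A}(v_n^{(i)})$. Using a diagonal argument in the style of Lemma~\ref{lem:from_negligible_to_strongly_negligible}, the sequence $r_n$ can be chosen so that $\nu_\strseq{A}(M_i^\strseq{A}) \to \alpha_i$, the boundary $\partial_\strseq{A} M_i^\strseq{A}$ is negligible, and the countable union of all boundaries jointly forms a negligible sequence $\seq{Y}$. Each $M_i^\strseq{A}$ is globular because within the induced substructure, the distinguished center $v_n^{(i)}$ has relative $r$-neighborhood mass tending to $f_{\tau_r^{(i)}}/\alpha_i$, which approaches $1$ as $r \to \infty$ by definition of the atom. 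Now set $M_R^\strseq{A} = V(\strseq{A}) \setminus (\seq{Y} \cup \bigcup_i M_i^\strseq{A})$; its asymptotic mass is $1 - \sum_i \alpha_i$, and it is residual because any vertex sequence $w_n \in M_R^{\str{A}_n}$ with $\liminf \nu_\strseq{A}(N^r(w_n)) > 0$ would, by compactness of rooted $r$-types, give rise to a new consistent atom, contradicting prior extraction.

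The main obstacle is orchestrating the countable diagonal extraction so that all boundary sets become simultaneously negligible and so that the stability requirement $\sum_i \lim \stonepar{M_i}{\strseq{A}} = \lim \sum_i \stonepar{M_i}{\strseq{A}}$ holds; the latter follows by dominated convergence from $\sum_i \alpha_i \leq 1$. Local convergence of the lifted sequence $\strseq{A}^+$ then follows because each local type in the lift refines a local type of $\strseq{A}$ by the cluster labels $M_i, M_R$, and every labelled type inherits a limit frequency from the extraction above. The secondary difficulty is handling the case where a single atom gives rise to many ``parallel copies'' of the same type in $\strseq{A}_n$ (for example, disjoint $K_5$'s): such behavior produces an atom of mass zero in the limit but still needs to be accounted for in the residual cluster, which is precisely what the uniform bound $\sup_{w_n \in M_R^{\str{A}_n}} \nu_\strseq{A}(N^r(w_n)) \to 0$ captures.
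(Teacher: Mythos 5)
This theorem is not proved in the paper at all --- it is quoted verbatim from \cite{clustering} (Theorem~1) --- so your argument has to stand on its own, and as it stands it has a genuine gap at its core: the invariant you extract from local convergence is the wrong one. Your ``atoms'' are built purely from one-point statistics, namely limit frequencies $f_\tau$ of \emph{fixed finite} rooted $r$-types, and these neither detect globular clusters nor certify them. In one direction, take $\str{A}_n = K_n \oplus K_n$ (two disjoint growing cliques): every fixed rooted type of radius $\geq 1$ has frequency tending to $0$, so your scheme finds no atoms and dumps everything into $M_R$; but each clique is a globular cluster of mass $\tfrac12$, and the residual condition $\sup_{v_n} \nu_{\str{A}_n}(N^r(v_n)) \to 0$ fails badly for $M_R$. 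In the other direction, take $\str{A}_n$ to be $n$ disjoint copies of $K_5$: every vertex has the same rooted type at every radius, so your definition yields an atom of mass $1$ (not mass zero, as you assert when you call this a ``secondary difficulty''), yet there is no globular cluster at all --- the correct clustering is entirely residual --- and your ball $N^{r_n}(v_n^{(1)})$ around a single chosen center has mass $5/(5n) \to 0 \neq 1$, so the claimed convergence $\nu_\strseq{A}(M_i^\strseq{A}) \to \alpha_i$ fails. The same conflation undermines two later steps: the globularity argument identifies ``vertices whose rooted type is $\tau_r$'' with ``vertices within distance $r$ of the chosen center,'' which is false whenever the same local type occurs in far-apart regions; and the residual-cluster argument derives no contradiction, since a vertex sequence $w_n \in M_R$ with positive neighborhood mass may realize exactly the same type chain as an already extracted atom while sitting at a different location.

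The underlying point is that globularity is a statement about concentration of mass around a \emph{point}, which cannot be read off from the distribution of single-vertex neighborhood types; it requires at least two-point local statistics (e.g.\ the limit of $\Pr[\dist(x,y) \leq r]$ for two independent uniform vertices), and extracting the clusters from such data --- while simultaneously guaranteeing local convergence of the marked lift $\strseq{A}^+$, which you assert in one sentence but which is itself a substantial part of the theorem --- is exactly the nontrivial content of the proof in \cite{clustering}. So the proposal would need to be rebuilt around mass-concentration functionals rather than type frequencies before the diagonal extraction and the stability bookkeeping (which are reasonable in outline) can be made to work.
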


We call the clustering from Theorem~\ref{thm:clustering} a \emph{globular clustering}.

\paragraph{Clustering in gadgets}\label{par:clusteting_in_gadgets}

We start with a globular clustering of $\strseq{G}$.
Strictly speaking, Theorem~\ref{thm:clustering} assumes a purely relational language while the sequence $\strseq{G}$ contains roots; however, we can replace them by unary marks.
We proceed to show that the globular clustering interacts well with the heavy tips.

\begin{lemma}\label{lem:default_clustering_of_gadgets}
    Let $\strseq{G}^+$ be a globular clustering and let $\tpl{z}^\strseq{G}_X$ be a heavy tip.
    Then there is a globular cluster marked by a symbol $M$ such that for each $r \in \N$ eventually $N_\strseq{G}^r(\tpl{z}^\strseq{G}_X) \subseteq M^{\strseq{G}^+}$.
    Moreover, the clusters for different heavy tips are distinct.
\end{lemma}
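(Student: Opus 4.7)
The strategy is to leverage heaviness to pin each root to a globular cluster, then use globular concentration to identify a single cluster $M$ absorbing the full neighborhood of the tip, and finally separate different tips by the unboundedness of cross-tip distances.

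Since the tip $\tpl{z}_X^\strseq{G}$ is heavy, there exists $r_0 \in \N$ with $\delta := \limsup_n \nu_{\str{G}_n}(N^{r_0}_{\str{G}_n}(\tpl{z}_X^{\str{G}_n})) > 0$. All pairs of roots in $X$ are $\sigma$-equivalent, so by the definition of $\Eq(\strseq{G})$ their mutual distances stay bounded by some $D$ for large $n$, giving $N^{r}_\strseq{G}(\tpl{z}_X^\strseq{G}) \subseteq N^{r+D}_\strseq{G}(z_i^\strseq{G})$ for each $i \in X$. Consequently every individual root satisfies $\limsup \nu(N^{r_0+D}(z_i^\strseq{G})) \geq \delta$. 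The residual cluster $M_R^\strseq{G}$ and the negligible sequence $\seq{Y}$ both have vanishing relative mass in any fixed-radius neighborhood of their points, so a root with constant-density mass around it cannot reside eventually in $M_R$ or $\seq{Y}$; hence each $z_i^\strseq{G}$ lies eventually in some globular cluster.

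By the constant-local convergence of $\strseq{G}$, transferred to the lifted sequence $\strseq{G}^+$ after re-marking roots as unary, for each cluster $M_j$ and each $r$ the quantity $\nu(N^r(\tpl{z}_X^{\str{G}_n}) \cap M_j^{\str{G}_n})$ converges. Since the total $\nu(N^r(\tpl{z}_X))$ has limit at least $\delta$ and splits, modulo the vanishing contributions from $\seq{Y}$ and $M_R$, into a sum over globular clusters, at least one cluster $M$ captures a positive-mass portion. The globular property then forces uniqueness: almost all of $M$ sits within some bounded-radius ball around a center $c_M^n$, and if a second cluster $M'$ also caught positive mass in the tip's neighborhood, its center $c_{M'}^n$ would lie within $O(r + D)$ of the tip and hence at bounded distance of $c_M^n$, so both globules would produce identical convergent local types around the tip, which combined with constant-local convergence yields a contradiction with them being distinct clusters.

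To upgrade from density-one coverage to set-theoretic containment $N^r(\tpl{z}_X^\strseq{G}) \subseteq M^\strseq{G}$ for every fixed $r$ eventually, fix $r \in \N$ and decompose $N^r(\tpl{z}_X^{\str{G}_n}) \setminus M^{\str{G}_n}$ as its intersection with $Y_n \cup M_R^{\str{G}_n} \cup \bigcup_{j \neq j(M)} M_j^{\str{G}_n}$. The last union is ruled out by uniqueness above; the residual part is ruled out because a point of $M_R$ within distance $r$ of the tip would inherit constant-density mass, contradicting the residual property; and the $\seq{Y}$-part can be absorbed by enlarging $M$ to include $N^r(\tpl{z}_X) \cap \seq{Y}$, which remains negligible and preserves the clustering axioms. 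This last move is the central technical obstacle, since it converts a limiting-density statement into an exact pointwise containment and in the worst case forces a mild reshaping of the clustering.

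Finally, for distinctness across heavy tips: if $X \neq X'$ are two different classes of $\sigma$, then for $i \in X$ and $i' \in X'$ one has $\dist_{\str{G}_n}(z_i^\strseq{G}, z_{i'}^\strseq{G}) \to \infty$, so the respective neighborhoods of $\tpl{z}_X$ and $\tpl{z}_{X'}$ are eventually disjoint and separated by arbitrarily large distances. If both tips were assigned the same globular cluster $M$, then $M$ would contain two disjoint subsets, each of positive density, whose mutual distance is unbounded, contradicting the concentration of $M$ around a single center guaranteed by the globular property.
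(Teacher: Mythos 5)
Your framing (heaviness, eventually bounded intra-tip distances) and your final separation of different heavy tips match the paper, but the core of the lemma --- the containment $N^r_\strseq{G}(\tpl{z}^\strseq{G}_X) \subseteq M^{\strseq{G}^+}$ --- is where your argument has genuine gaps. First, your uniqueness step is not a valid deduction: nothing in the definition of a (globular) clustering forbids two \emph{distinct} globular clusters from lying at bounded distance from each other or from having identical local statistics, so ``both globules would produce identical convergent local types \dots{} a contradiction with them being distinct clusters'' proves nothing; a priori the mass of the tip's ball really could be split between two clusters, and this must be excluded by a different mechanism. Second, residuality is an \emph{intrinsic} property: it bounds $\sup_{v_n\in Z_n}\nu_{\str{A}_n}(N^r_{Z_n}(v_n))$ with neighborhoods taken inside the induced cluster, so a vertex of $M_R$ sitting within distance $r$ of the tip does not ``inherit constant-density mass'' in any way that contradicts residuality --- the nearby mass could belong to a different cluster. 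Third, ``absorbing'' $N^r(\tpl{z}^\strseq{G}_X)\cap\seq{Y}$ by enlarging $M$ changes the given clustering, whereas the lemma (and its later use, where only \emph{light} tips are re-marked) asserts containment in a cluster of the given globular clustering $\strseq{G}^+$; moreover the enlargement would have to be carried out for all $r$ simultaneously.

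All three defects are repaired by the one ingredient you never invoke, which is the paper's entire proof: $\bigcup_i \partial_\strseq{G} M_i^{\strseq{G}^+} \subseteq \seq{Y}$ and $\seq{Y}$ is negligible, i.e.\ every fixed-radius neighborhood of $\seq{Y}$ has vanishing relative mass. For a heavy tip, the ball $N^r_\strseq{G}(\tpl{z}^\strseq{G}_X)$ has (for each sufficiently large $r$, along the relevant indices) mass bounded away from $0$ and diameter bounded by $2r$ plus the eventually bounded distances between roots of the same class. If such a ball contained any vertex outside the cluster $M$ carrying a positive share of its mass --- be it unmarked, residual, or in another globular cluster --- then a short path inside the ball from that vertex to a vertex of $M$ would cross $\partial_\strseq{G} M^{\strseq{G}^+}\subseteq\seq{Y}$, placing the whole positive-mass ball inside a bounded-radius neighborhood of $\seq{Y}$ and contradicting negligibility. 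This yields the exact set-theoretic containment eventually, for every $r$, with no comparison of local types, no appeal to ambient mass around residual points, and no re-marking; afterwards your exclusion of the residual cluster (the mass around $z_i^\strseq{G}$ now lies inside the cluster itself) and your argument that two heavy tips cannot share a globular cluster (two far-apart positive-mass pieces contradict globular concentration) go through exactly as in the paper.
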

\begin{proof}
    The tip $\tpl{z}^\strseq{G}_X$ is heavy, so there is $d \in \N$ such that $\lim \nu_\strseq{G}(N^d_\strseq{G}(\tpl{z}^\strseq{G}_X)) > 0$.
    As the total mass of clusters tends to $1$, it eventually holds that almost all vertices from the sets $N^d_\strseq{G}(\tpl{z}^\strseq{G}_X)$ lie in a marked cluster.
    Since the boundary of each cluster is a negligible sequence, the whole ball $N^d_\strseq{G}(\tpl{z}^\strseq{G}_X)$ lies eventually in the cluster (otherwise the $2d$-neighborhood of $\partial_\strseq{G} M^\strseq{G}$ has positive mass).
    The same reasoning applies to $N^r_\strseq{G}(\tpl{z}^\strseq{G}_X)$ for any $r \geq d$.
    
    The cluster cannot be residual, which is witnessed by the positive mass around the sequence $z_i^\strseq{G}$ for an arbitrary $i \in X$.
    Moreover, two different heavy tips cannot share a common globular cluster as they concentrate a positive mass of vertices in their $r$-neighborhoods for some fixed $r$, but tend away from each other.
    This is incompatible with the definition of the globular cluster.
\end{proof}

\begin{proof}[Proof of Lemma~\ref{lem:existence_of_good_clustering}]
    Let $\strseq{G}^+$ be a globular clustering.
    Consider a non-decreasing unbounded function $f: \N \to \N$ satisfying:
    \begin{enumerate}[(i)]
        \item $f(n) < \frac{1}{3}\min\{ \dist_{\str{G}_n}(z^{\str{G}_n}_i, z^{\str{G}_n}_j) : (i,j) \not\in \sigma \}$,
        \item if $\tpl{z}^\strseq{G}_X$ is a light tip, then $\nu_\strseq{G}(N^{2\bm{f}}_\strseq{G}(\tpl{z}^\strseq{G}_X)) \to 0$.
    \end{enumerate}
    Such a function can be constructed similarly as in the proof of Theorem~\ref{thm:fo_small_roots}.
    
    We obtain the desired clustering as follows:
    for each light tip $\tpl{z}^\strseq{G}_X$, we assign a new mark $M_X$ to the vertices $N^{\bm{f}}_\strseq{G}(\tpl{z}^\strseq{G}_X)$ and remove marks from $\partial N^{\bm{f}}_\strseq{G}(\tpl{z}^\strseq{G}_X)$.
    Observe that this is indeed a clustering since we only modify negligible sets; in particular, the vertices with removed marks form a negligible sequence thanks to $\nu_\strseq{G}(N^{2\bm{f}}_\strseq{G}(\tpl{z}^\strseq{G}_X)) \to 0$.
    Moreover, the sequence of unmarked vertices is now strongly-negligible as we have marked an (eventually) arbitrarily large neighborhood of light tips (using that $f \to \infty$).
    Note that arbitrarily large neighborhoods of heavy tips lie in a cluster by Lemma~\ref{lem:default_clustering_of_gadgets}.
    
    We finish the decomposition of by setting $\strseq{G}^{(i)}$ to be the structure induced by the cluster containing the tip $\tpl{z}^\strseq{G}_{X_i}$.
    Resp. for $X_0$, we set $\strseq{G}^{(0)}$ to be the union of all the remaining clusters.
    For light tips, we have that $\nu_\strseq{G}(V(\strseq{G}^{(i)})) \to 0$ by the choice of $f$.
    The rest of the second requirement and the third requirement for the decomposition are satisfied as the modified marks form a clustering.
\end{proof}

\section{Inverse theorems for local convergence}\label{sec:inverse_theorems}

This section is devoted to inverse theorems for local convergence of the sequence $\strseq{A}*\strseq{G}$, i.e. to statements of the form:
if $\strseq{A}*\strseq{G}$ is local convergent, then the sequences $\strseq{A}$ and $\strseq{G}$ satisfy some property.

Such a description seems difficult in general.
There are (easy to construct) examples of sequences $\strseq{A}$ and $\strseq{G}$ that do not even converge and still produce a convergent result.
Hence, we establish a stronger notion of convergence and restrict our attention to those sequences whose resulting sequence $\strseq{A}*\strseq{G}$ converge in this stronger sense.

To formalize the stronger notion of convergence, we revisit the definition of the structure $\str{A}*\str{G}$.
We introduce new symbols to the language of resulting structures to make certain important features of the structure $\str{A}*\str{G}$ definable.

\begin{definition}[Construction language]\label{def:construction_language}
    Define $L_* = L_R \cup \{\Int, \Ext, \rho\}$, where the relation symbols $\Int$ and $\Ext$ are unary, and $\rho$ is of arity $\arity{R}+1$.
\end{definition}

\begin{definition}[Gadget construction with construction language]\label{def:gadget_construction_with_construction_language}
    Let $\str{A}$ be a base structure and $\str{G}$ a gadget.
    Abusing notation, we denote by $\str{A}*\str{G}$ the $L_*$-lift of the structure $\str{A}*\str{G}$ from Definition~\ref{def:gadget_construction}.
    The additional symbols are interpreted as follows:
    $R^{\str{A} * \str{G}}$ marks the $R$-edges of $\strseq{A}$, the sets $\Int^{\str{A}*\str{G}}$ and $\Ext^{\str{A}*\str{G}}$ partition $V(\str{A}*\str{G})$ into internal and external vertices, and $\rho^{\str{A}*\str{G}}$ is the graph of the (partial) function $\rho$ that maps the external vertices to their corresponding $R$-edge. 
    
    In the set notation, we have:
    \begin{align*}
        R^{\str{A}*\str{G}}    &= \{([a_1], \dots, [a_s]) : (a_1, \dots, a_s) \in R^\str{A} \}
        ,\\
        \Int^{\str{A}*\str{G}} &= \{[x] : [x] \in V(\str{A} * \str{G}) \text{ is internal, i.e. contains a vertex of }\str{A} \}
        ,\\
        \Ext^{\str{A}*\str{G}} &= V(\str{A} * \str{G}) \setminus \Int^{\str{A}*\str{G}}
        ,\\
        \rho^{\str{A}*\str{G}} &= \{([(\tpl{e}, v)], [e_1], \dots, [e_\arity{R}]) : [(\tpl{e}, v)] \in \Ext^{\str{A}*\str{G}}\} \subseteq \Ext^{\str{A}*\str{G}} \times R^{\str{A}*\str{G}}
    \end{align*}
\end{definition}

In this section, we consider the resulting structures $\str{A}*\str{G}$ to be $L_*$-structures according to the definition above.
Clearly, $\FO(L_*)$-convergence implies $\FO(L)$-convergence as $L \subset L_*$.
\todo{Sometimes, we also have the converse: consider some rigid graph as a gadget with the property that $\str{A}*\str{G}$ allows to identify the copies of $\str{G}$ (e.g. the graph with three 7-cycles).}

Also, we are going to assume that no $S$-edge spans the roots of a gadget $\str{G}$.
In such a case, the substructure of $\str{A}*\str{G}$ induced by the set $\Int^{\str{A}*\str{G}}$ is isomorphic to $\str{A}$.
We remark that with a bit more care it is possible to determine the structure $\str{A}$ from $\str{A}*\str{G}$ under a milder assumption that no edge in $\str{A}*\str{G}$ has two sources, i.e. every edge either comes from $\str{A}$ or a copy of $\str{G}$ but not from both, provided that the positions of edges on gadgets' root are constant.

\subsection{Construction language}\label{ssec:construction_language}

Here we give an inverse theorem for $\FOloc{}(L_*)$-convergence of $\strseq{A}*\strseq{G}$.
This is the strongest sense of convergence that we consider.

\begin{theorem}\label{thm:inverse_for_construction_language}
    Suppose that $\strseq{A}*\strseq{G}$ is an $\FOloc{p}(L_*)$-convergent sequence.
    Then the conditional probabilities in the sense of Theorem~\ref{thm:general_local_convergence} converge.
    That is, for every $p \in \N$ and a non-trivial profile $\pi = (I,E_1, \dots, E_t)$ of a $p$-tuple holds that for each $\phi \in \FOloc{|I|+(p-|I|)\arity{R}}(L_R)$ the sequence $\stonepar{\phi|_\pi}{\strseq{A}}$ converges.
\end{theorem}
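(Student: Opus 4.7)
The plan is to reduce convergence of the conditional probabilities $\stonepar{\phi|_\pi}{\strseq{A}}$ to convergence of ordinary Stone pairings on $\strseq{A}*\strseq{G}$ in the construction language, so that the $\FOloc{p}(L_*)$-convergence hypothesis applies directly. Fix a non-trivial profile $\pi = (I, E_1, \dots, E_t)$ of a $p$-tuple and an $r$-local formula $\phi \in \FOloc{|I|+(p-|I|)\arity{R}}(L_R)$. I will build two formulas in $\FOloc{p}(L_*)$, a profile detector $\psi_\pi$ and a translated version $\phi^*$ of $\phi$, whose Stone pairings on $\strseq{A}*\strseq{G}$ determine the desired conditional probability by a simple ratio.

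First, a $1$-local formula $\psi_\pi(y_1,\dots,y_p) \in \FOloc{p}(L_*)$ selects the $p$-tuples of profile $\pi$: it asserts $\Int(y_i)$ for $i \in I$, $\Ext(y_i)$ for $i \notin I$, and for external indices $i,i'$ it requires $\rho(y_i) = \rho(y_{i'})$ (expressible as an existential over one shared $\arity{R}$-tuple appearing in both $\rho$-atoms) exactly when $i,i'$ share some $E_j$. The translation $\phi^*(y_1, \dots, y_p)$ is obtained by binding an $\arity{R}$-tuple $\tpl{e}^{(i)}$ to each external $y_i$ through the guard $\rho(y_i,\tpl{e}^{(i)})$, substituting $\tpl{e}^{(i)}$ (resp.\ $y_i$) for each external (resp.\ internal) block $\tpl{x}_i$ of $\phi$, and relativizing every quantifier of $\phi$ to $\Int$. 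The standing assumption that no $S$-edge spans two gadget roots makes the substructure of $\str{A}*\str{G}$ induced by $\Int$ isomorphic to $\str{A}$, so the relativization preserves semantics; and because the Gaifman distance in $\str{A}$ agrees with the distance in the $\Int$-subgraph of $\str{A}*\str{G}$, which dominates the Gaifman distance in $\str{A}*\str{G}$, the formula $\phi^*$ is $(r+1)$-local in $L_*$ (the $+1$ absorbs the single $\rho$-hop from each external $y_i$ to the vertices of its $R$-edge). By construction, for every $p$-tuple $\tpl{a}$ of profile $\pi$,
\[
    \str{A}*\str{G} \models \phi^*(\tpl{a}) \;\Longleftrightarrow\; \str{A} \models \phi(\tpl{b}),
\]
where $\tpl{b}$ is the representation of $\tpl{a}$.

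The final step is a uniform counting argument. Among $p$-tuples in $\str{A}*\str{G}$, the fiber of the representation map $\tpl{a} \mapsto \tpl{b}$ restricted to profile $\pi$ has constant cardinality $(|V(\str{G})| - \arity{R})^{p-|I|}$, which depends on $|V(\str{G})|$ and $\pi$ but not on $\tpl{b}$. Hence this common factor cancels and
\[
    \stonepar{\phi|_\pi}{\str{A}} = \frac{\stonepar{\phi^* \wedge \psi_\pi}{\str{A}*\str{G}}}{\stonepar{\psi_\pi}{\str{A}*\str{G}}}
\]
whenever the denominator is nonzero. Both numerator and denominator lie in $\FOloc{p}(L_*)$ and therefore converge along $n$ by the hypothesis; non-triviality of $\pi$ forces the denominator to have positive limit, so the ratio converges as required.

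The delicate point is the construction of $\phi^*$: verifying that the syntactic relativization to $\Int$ faithfully emulates $\phi$ on $\str{A}$ (which hinges on identifying $\str{A}$ with the induced $\Int$-substructure via the edge-span hypothesis) and keeping the locality radius bounded by a uniform additive constant coming from the $\rho$-hop. The profile detector $\psi_\pi$, the counting cancellation, and the final conclusion are then essentially automatic.
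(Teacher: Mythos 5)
Your proposal is correct and follows essentially the same route as the paper: it encodes both the profile test and the evaluation of $\phi$ on the representation by a local $L_*$-formula (using $\Int$, $\Ext$, $\rho$ and relativizing quantifiers to $\Int$), and then recovers $\stonepar{\phi|_\pi}{\strseq{A}}$ as a ratio of convergent Stone pairings, with non-triviality of $\pi$ giving a positive limit for the denominator. The only difference is presentational — you split the paper's single formula $\psi$ into $\phi^* \wedge \psi_\pi$ and make the constant-fiber counting explicit — which does not change the argument.
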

\begin{proof}
    Fix a non-trivial profile $\pi$ of a $p$-tuple and a formula $\phi \in \FOloc{}(L_R)$ with $p$ blocks of free variables.
    There is a formula $\psi \in \FOloc{p}(L_*)$ such that a $p$-tuple $\tpl{a}$ from $\str{A}*\str{G}$ satisfy $\psi$ if and only if $\tpl{a}$ has the profile $\pi$ and the representation of $\tpl{a}$ in $\str{A}$ satisfies $\phi$.
    To construct $\psi$, first check whether the profile of $\tpl{a}$ matches $\pi$ (using the relations $\Int^{\str{A}*\str{G}}$, $\Ext^{\str{A}*\str{G}}$, and $\rho^{\str{A}*\str{G}}$), then obtain the representation of each vertex $a_i$ (if $a_i \in \Int^{\str{A}*\str{G}}$, use $a_i$; otherwise, find $\rho(a_i)$ via the relation $\rho^{\str{A}*\str{G}}$), and use it to evaluate the formula $\phi$ with quantifiers restricted to $\Int^{\str{A}*\str{G}}$.
    This indeed does correspond to the evaluation of $\phi$ in $\str{A}$ with the representation of $\tpl{a}$ as arguments.
    Thus, we have
    \[
        \stonepar{\psi}{\str{A}*\str{G}} = \stonepar{\phi|_\pi}{\str{A}} \cdot \Pr[\tpl{a} \in V(\str{A}*\str{G}) \text{ has profile } \pi]
        .
    \]
    The probabilities $\stonepar{\psi}{\strseq{A}*\strseq{G}}$ converge by the assumption.
    Moreover, we can express by a local formula in the language $L_*$ whether a given $p$-tuple has profile $\pi$.
    Therefore, the probabilities $\Pr[\tpl{x} \text{ has profile } \pi]$ converge and, additionally, as $\pi$ is non-trivial, the limit is positive.
    It follows that the value $\stonepar{\phi|_\pi}{\str{A}}$ converges as well.
\end{proof}

Note that if a random vertex from $\str{A}*\str{G}$ is close to an $R$-edge (i.e. in a fixed finite distance) with positive limit probability, the same technique shows that also the gadgets are $\FOloc{m}$-convergent and, in fact, $\FO_m$-convergent for the appropriate $m$ from the statement of Theorem~\ref{thm:general_local_convergence}:
we use that the gadget $\str{G}$ can be interpreted from the structure $\str{H}$ induced from $\str{A}*\str{G}$ by an $R$-edge $\tpl{a}$ together with the set $\{b: \rho(b) = \tpl{a}\}$.
The additional elementary convergence follows from the fact that the diameter of $\str{H}$ is $2$ (all external vertices are connected to the $R$-edge); thus, local formulas in $\str{H}$ are able to test arbitrary sentences in $\str{G}$.
However, it is possible that a random vertex from $\str{A}*\str{G}$ is far from all $R$-edges a.a.s. and then we cannot say anything about the gadgets.

Naturally, it is possible to readily generalize the same technique to prove an inverse statement for Theorem~\ref{thm:local_multiple_gadgets} about multiple gadgets.

\subsection{Removing locality of construction language}\label{ssec:omitting_locality_of_construction_language}

In this section, we remove the feature of locality of gadget copies in structure $\str{A}*\str{G}$.
Here, we consider the structures $\str{A}*\str{G}$ to be $L_*$-structures, however, we redefine the relation $\rho^{\str{A}*\str{G}}$ so that it covers only the neighborhood of gadgets' roots.
That is,
\begin{multline*}
    \rho^{\str{A}*\str{G}} = \big\{([(\tpl{e}, v)], [e_1], \dots, [e_\arity{R}]) \;\big|\;
    \\
    [(\tpl{e}, v)] \in \Ext^{\str{A}*\str{G}} \text{ and } \exists i : \dist([(\tpl{e}, v)], [e_i]) = 1\big\}
    ,
\end{multline*}
where we measure the distance $\dist_{\str{A}*\str{G}}(v, \tpl{e})$ in the structure $\str{A}*\str{G}$ without $\rho^{\str{A}*\str{G}}$.

This change reveals importance of the fact whether the sequence of roots is negligible.

\begin{theorem}\label{thm:inverse_for_construction_language_without_locality}
    Suppose that $\strseq{A}*\strseq{G}$, with the modification above, is an $\FOloc{}(L_*)$-convergent sequence.
    Then either the conditional probabilities in sense of Theorem~\ref{thm:general_local_convergence} converge or the sequence $\tpl{z}^\strseq{G}$ is negligible in $\strseq{G}$.
\end{theorem}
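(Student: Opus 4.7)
The plan is to argue by contrapositive, mimicking the proof of Theorem~\ref{thm:inverse_for_construction_language} but replacing direct use of $\rho$ with a local ``walk-to-root'' detour whose effectiveness is controlled precisely by whether $\tpl{z}^\strseq{G}$ is negligible. Assume $\strseq{A}*\strseq{G}$ is $\FOloc{}(L_*)$-convergent with the modified $\rho$, that $\tpl{z}^\strseq{G}$ is not negligible, and, for contradiction, that $\stonepar{\phi|_\pi}{\strseq{A}}$ fails to converge for some non-trivial profile $\pi = (I, E_1, \ldots, E_t)$ and local $L_R$-formula $\phi$. Fix $r_0 \in \N$ with $\limsup \nu_\strseq{G}(N^{r_0}_\strseq{G}(\tpl{z}^\strseq{G})) > 0$.

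The first step is to introduce a local $L_*$-formula $\mathrm{FindRoot}_{r_0}(v, \tpl{y})$ asserting the existence of a path $v = v_0, \ldots, v_k$ with $k \leq r_0$, $v_0, \ldots, v_{k-1} \in \Ext$, $v_k \in \Int$, and $\rho(v_{k-1}, \tpl{y})$ in the modified sense. Two combinatorial facts support this definition: external-only walks in $\str{A}*\str{G}$ never cross between distinct gadget copies (which meet only in internal vertices), and truncating a shortest $\str{G}$-path at its first root identifies $N^{r_0}_\strseq{G}(\tpl{z}^\strseq{G})$ with the set of non-roots reachable from some root by a non-root path of length at most $r_0$. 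Together these imply that $\mathrm{FindRoot}_{r_0}(v, \cdot)$ returns the unique unmodified $\rho(v)$ whenever the image of $v$ in $\str{G}$ lies in $N^{r_0}_\strseq{G}(\tpl{z}^\strseq{G})$. Using this, I construct $\xi \in \FOloc{}(L_*)$ by direct analogy with the proof of Theorem~\ref{thm:inverse_for_construction_language}: enforce the internal/external pattern of $\pi$, require $\mathrm{FindRoot}_{r_0}$ to be defined on every external $x_i$, enforce the grouping $E_1, \ldots, E_t$ by comparing the recovered R-edges, and evaluate $\phi$ on the resulting representation with quantifiers restricted to $\Int^{\str{A}*\str{G}}$ along $L_R$-adjacency (possible since $L_R \subseteq L_*$).

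The factorisation
\[
    \stonepar{\xi}{\str{A}_n*\str{G}_n} = \Pr_n[E_n] \cdot \stonepar{\phi|_\pi}{\str{A}_n}
\]
then follows, where $E_n$ is the locally $L_*$-definable event ``profile $\pi$ with every external close to a root of its copy''; conditioning on $E_n$ preserves the uniform distribution of the representation in $\str{A}_n$ because closeness depends only on the vertex inside its gadget and not on which R-edge is chosen. Both $\stonepar{\xi}{\strseq{A}*\strseq{G}}$ and $\Pr_n[E_n]$ converge by $\FOloc{}(L_*)$-convergence, so a positive limit of $\Pr_n[E_n]$ forces convergence of $\stonepar{\phi|_\pi}{\strseq{A}}$, contradicting the choice of $\pi$ and $\phi$.

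The main obstacle I anticipate is showing $\lim \Pr_n[E_n] > 0$. Writing $\Pr_n[E_n] = \Pr_n[\pi] \cdot \Pr_n[\text{all close} \mid \pi]$, the conditional factor is asymptotically $\prod_j (\nu_{\str{G}_n}(N^{r_0}_{\str{G}_n}(\tpl{z}^{\str{G}_n})))^{|E_j|}$ up to sampling-without-replacement corrections that vanish as $|V(\str{G}_n)| \to \infty$; non-triviality of $\pi$ combined with non-negligibility of the roots keeps both factors with positive limsup. The delicate point is that each of the locally-$L_*$-definable density fractions entering the expression for $\Pr_n[E_n]$ is genuinely convergent by assumption, so the limsup of $\Pr_n[E_n]$ coincides with its limit and is therefore positive. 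The boundary case where externals are already negligible in $\strseq{A}*\strseq{G}$ collapses the set of non-trivial profiles to ``all internal'', whose conditional probability equals $\stonepar{\phi}{\str{A}_n}$ and converges by restricting $\phi$ directly to $\Int$ along $L_R$-adjacency.
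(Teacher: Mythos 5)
Your proposal follows essentially the same route as the paper's proof: fix a radius witnessing non-negligibility of the roots, build a local $L_*$-formula that recovers $\rho$ for external vertices within that distance of an internal vertex (your FindRoot device), checks the profile, and evaluates $\phi$ on the recovered representation restricted to $\Int$, then factor the Stone pairing and conclude from positivity of the limit probability of the conditioning event. The differences are only presentational: you make explicit the root-finding formula and the positivity discussion that the paper compresses into one sentence, and your handling of the delicate step ($\lim \Pr[E_n]>0$) is at the same level of rigor as the paper's own assertion.
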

\begin{proof}
    The idea and its execution is very similar to Theorem~\ref{thm:inverse_for_construction_language}.
    Let us assume that the sequence $\tpl{z}^\strseq{G}$ is not negligible.
    Therefore, there exists $r \in \N$ such that $\lim \nu_\strseq{G} N^r(\tpl{z}^{\strseq{G}}) > 0$.
    For a non-trivial $\pi$ and a formula $\phi \in \FOloc{}(L_R)$, we can create a formula $\psi$ that is satisfied by a tuple $\tpl{a}$ from $\str{A}*\str{G}$ if and only if the profile of $\tpl{a}$ is $\pi$, the representation of $\tpl{a}$ in $\str{A}$ satisfy $\phi$, \emph{and} the external vertices of $\tpl{a}$ are at distance at most $r$ from an internal vertex.
    As the limit probability of observing such a tuple $\tpl{a}$ is positive (using that $\pi$ is non-trivial and $\tpl{z}^{\strseq{G}}$ is not negligible), we proceed to the conclusion that the sequence $\stonepar{\phi|_\pi}{\str{A}}$ converges.
\end{proof}
\section{Applications}\label{sec:applications}

We present two simple applications of gadget construction.
We show that the question of $\FO$-convergence of a general sequence reduces to $\FO$-convergence of a sparse sequence, which has the property that asymptotically almost all $p$-tuples form an independent set.
Then for any given $k \geq 2$, we construct an almost surely $\FO_{k-1}$-convergent sequence of graphs which is not $\FO_k$-convergent.

\subsection{Reduction to sparse sequences}\label{ssec:sparse_sequences}

Here we prove that a sequence is $\FO$-convergent is and only if a certain sparse sequence is $\FO$-convergent.
More precisely, we show it for $\FO_p$-convergence.

\begin{proposition}\label{prop:reduction_to_sparse}
    Let $\strseq{A}$ be a sequence of $L$-structures.
    Denote by $\str{B}_n$ the structure $\str{A}_n$ with $n$ leaves marked by a distinct symbol attached to each vertex.
    The sequence $\strseq{A}$ is $\FO_p$-convergent if and only if the sequence $\strseq{B}$ is $\FO_p$-convergent.
\end{proposition}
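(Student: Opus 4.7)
The plan is to realize $\str{B}_n$ as a gadget construction and then use the machinery already developed. Introduce a new unary symbol $R$ and let $\str{A}_n^R$ denote the base structure obtained from $\str{A}_n$ by marking every vertex by $R$. Let $\str{G}_n$ be the star gadget with vertex set $\{z_1, \ell_1, \dots, \ell_n\}$: the sole root is $z_1$, each $\ell_i$ is marked by a new unary symbol $M$, and each $\ell_i$ is adjacent to $z_1$ via a new binary edge relation $E$. Then $\str{B}_n \cong \str{A}_n^R * \str{G}_n$ as $(L \cup \{M, E\})$-structures.

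For the forward direction, assume $\strseq{A}$ is $\FO_p$-convergent. The star sequence $\strseq{G}$ is both elementarily convergent and $\FOcloc{q}$-convergent for every $q$: the automorphism type of a tuple of vertices in $\str{G}_n$ is determined by the root/leaf pattern, the equalities among coordinates, and leaf counts that stabilize as $n$ grows. Marking every vertex by $R$ does not affect convergence, so Corollary~\ref{cor:preservation_of_elementary_convergence} yields elementary convergence of $\strseq{B}$. Since every vertex of $\str{A}_n^R$ is an $R$-edge, $\lim\stonepar{R}{\strseq{A}^R}=1>0$, so the $R$-edges are maximally dense; as $\arity{R}=1$, condition (i) of Theorem~\ref{thm:general_local_convergence} reduces to $\FOloc{p}$-convergence of $\strseq{A}$, which follows from $\FO_p$-convergence, while the remaining hypotheses (constant-local convergence of $\strseq{G}$ up to level $p$, and a limit for the proportion of internal vertices, which is $1/(n+1)\to 0$) are immediate. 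Theorem~\ref{thm:general_local_convergence} therefore delivers $\FOloc{p}$-convergence of $\strseq{B}$, and Gaifman's theorem combines this with elementary convergence to give $\FO_p$-convergence of $\strseq{B}$.

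For the backward direction, assume $\strseq{B}$ is $\FO_p$-convergent. The key observation is that every vertex of $\str{A}_n$ has exactly $n$ leaves, so a uniformly random leaf of $\str{B}_n$, projected onto its unique non-leaf neighbor, is a uniformly random vertex of $\str{A}_n$. Given $\phi(x_1,\dots,x_p)\in\FO_p(L)$, construct $\tilde\phi(y_1,\dots,y_p)\in\FO_p(L\cup\{M,E\})$ asserting that every $y_i$ is marked by $M$ and that $\phi(\pi(y_1),\dots,\pi(y_p))$ holds in the substructure induced on non-$M$ vertices, where $\pi(y_i)$ denotes the unique $E$-neighbor of $y_i$ (first-order definable under our setup). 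A direct computation yields
\[
    \stonepar{\tilde\phi}{\str{B}_n}
    \;=\;
    \left(\frac{n}{n+1}\right)^p \stonepar{\phi}{\str{A}_n},
\]
since a uniform $p$-tuple of leaves induces, coordinatewise and independently, a uniform $p$-tuple of vertices of $\str{A}_n$ under $\pi$. The left-hand side converges by hypothesis and $(n/(n+1))^p\to 1$, so $\stonepar{\phi}{\strseq{A}}$ converges. The case $p=0$ is immediate via relativization: for a sentence $\phi\in\FO_0(L)$ one has $\stonepar{\phi}{\str{A}_n}=\stonepar{\phi^{\neg M}}{\str{B}_n}$.

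The principal subtlety lies in the backward direction: the naive attempt to condition directly on $p$-tuples of non-leaves would force probabilities of vanishing magnitude $1/(n+1)^p$, which $\FO_p$-convergence does not control. The ``sampling via leaves'' device circumvents this by exploiting both the abundance of leaves and their uniform balance across vertices, so that the conversion factor stabilizes to $1$. The forward direction, by contrast, is a clean plug-in of the paper's machinery once one observes that the $R$-edges cover all of $V(\str{A}_n^R)$.
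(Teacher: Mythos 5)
Your proof is correct, and the forward direction coincides with the paper's: realize $\str{B}_n$ as a gadget construction over a unary $R$ marking every vertex with a star gadget, then invoke Corollary~\ref{cor:preservation_of_elementary_convergence} for elementary convergence and Theorem~\ref{thm:general_local_convergence} for local convergence (your gloss that condition~(i) ``reduces to $\FOloc{p}$-convergence of $\strseq{A}$'' silently ignores profiles in which several external coordinates share a star; these are trivial once $|V(\str{A}_n)|\to\infty$, and in the bounded case the conditional probabilities still follow from $\FO_p$-convergence, so this is a cosmetic gap at the same level of terseness as the paper itself). The backward direction, however, is genuinely different from the paper's. The paper argues elementary convergence by transferring Spoiler's winning strategy in $\EF_k(\str{A}_n;\str{A}_m)$ to $\EF_k(\str{B}_n;\str{B}_m)$, and local convergence by observing that the construction-language relations of $L_*$ are definable in $\str{B}_n$ from the leaf marks and then applying the inverse Theorem~\ref{thm:inverse_for_construction_language}, using that a random unary $R$-edge is a random vertex of $\str{A}_n$. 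You instead give a direct syntactic transfer: for each $\phi\in\FO_p(L)$ you build $\tilde\phi$ (relativize $\phi$ to non-leaves and pull each argument back through the unique $E$-neighbour of a leaf) and verify the exact identity $\stonepar{\tilde\phi}{\str{B}_n}=(n/(n+1))^p\stonepar{\phi}{\str{A}_n}$, with sentences handled by plain relativization. This exploits the same underlying insight as the inverse theorem (leaves are uniformly balanced over vertices, and $\str{A}_n$ is definable inside $\str{B}_n$), but it is more elementary and self-contained: it needs neither Ehrenfeucht--Fra\"{i}ss\'{e} games nor Section~\ref{sec:inverse_theorems}, it treats local and non-local formulas uniformly in one stroke, and it even yields a quantitative relation between the Stone pairings. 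What the paper's route buys in exchange is a demonstration of its general inverse machinery, which applies beyond this specific leaf construction where no such clean closed-form identity is available.
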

\begin{proof}
    Attaching $n$ leaves to each vertex is a special case of gadget construction with unary $R$-edges.
    The sequence $\strseq{S}$ of stars on $n$ vertices with the center as the root is $\FO$-convergent.
    Thus, the implication from left to right is by Corollary~\ref{cor:preservation_of_elementary_convergence} and  Theorem~\ref{thm:general_local_convergence}.
    
    Conversely, if Spoiler has a winning strategy in $\EF_k(\str{A}_n;\str{A}_m)$, the same strategy surely works in $\EF_k(\str{B}_n;\str{B}_m)$ as Duplicator cannot use the new leaves due to their marks.
    Thus, if $\strseq{A}$ is not elementarily convergent, neither is $\strseq{B}$.
    As for the local convergence, note that in $\str{B}_n$ we can define all the relations from the construction language $L_*$ from Section~\ref{sec:inverse_theorems} (again, using the marks on the new vertices).
    Therefore, if $\strseq{B}$ is $\FOloc{p}(L)$-convergent, it is also $\FOloc{p}(L_*)$.
    Thus, the original sequence $\strseq{A}$ must also be $\FOloc{p}$-convergent by Theorem~\ref{thm:inverse_for_construction_language}:
    each non-trivial profile $\pi = (I,E_1, \dots, E_t)$ have $I = \emptyset$.
    However, selecting a random $R$-edge (representation of an external vertex) is the same as selecting a random vertex of $\str{A}_n$ thanks to $R^{\str{A}_n} = V(\str{A}_n)$ (abusing notation).
\end{proof}

The proposition implies that convergence of even very sparse structures is as complex as the general case.
This is in sharp contrast with e.g. the theory of left limits, which becomes trivial for graphs with a subquadratic number of edges.

\subsection{Graph sequences with bounded convergence}\label{ssec:bounded_convergence}

Here we use probability to construct an almost surely $\FO_{k-1}$-convergent sequence of graphs which is not $\FO_k$-convergent.

We write $H^k(n,p) = (V, E)$ for the random $k$-uniform hypergraph where each potential edge $X \in \binom{V}{k}$ belongs to $E$ with probability $p$.
We say that a $k$-uniform hypergraph $\str{H} = (V, E)$ has \emph{$q$-extension property} if for each $S \subseteq V$, $|S| = q-1$, and a partition $F_0, F_1$ of $\binom{S}{k-1}$ there is a vertex $v \in V \setminus S$ such that for $A \in \binom{S}{k-1}$ we have $A \cup \{v\} \in E$ if and only if $A \in F_1$.

Let $\strseq{H}$ be a sequence of $k$-uniform hypergraphs with $|V(\strseq{H})| \to \infty$.
Similarly to the case of graphs, if for each $q \in \N$ the hypergraphs from $\strseq{H}$ eventually have the $q$-extension property, then the sequence $\strseq{H}$ elementarily converges to the $k$-uniform Rado hypergraph $\mathcal{H}^k$.
For such sequences, $\FO$-convergence reduces to $\QF$-convergence, resp. $\FO_k$-convergence to $\QF_k$-convergence~\cite[Lemma~2.28]{unified_approach}, where $\QF$ is the set of quantifier-free formulas and $\QF_k$ the set of quantifier-free formulas with $k$ free variables.

\begin{example}\label{ex:bounded_convergence}
    Let $\strseq{H}$ be the following sequence of random $k$-uniform hypergraphs.
    \begin{align*}
        \str{H}_n =
        \begin{cases}
            H^k(n,p) & \text{if $n$ is odd}
            , \\
            H^k(n,q) & \text{if $n$ is even}
            ,
        \end{cases}
    \end{align*}
    where $0 < p < q < 1$.
    Such a sequence elementarily converges to $\mathcal{H}^k$ almost surely (similarly to~\cite[Lemma~2.33]{unified_approach}).
    Moreover, $\strseq{H}$ is $\QF_{k-1}$-convergent as each $(k-1)$-tuple of distinct vertices form an independent set.
    Obviously, $\strseq{H}$ is almost surely not $\QF_k$-convergent as $p \not= q$. 
    
    Put a unary $R$-edge to each vertex of $\str{H}_n$ and replace it by a gadget $\str{G}_n$, which is the star on $2^n$ vertices with the center as the root.
    Observe that the sequence $\strseq{H}' = \strseq{H} * \strseq{G}$ is (a.s.) $\FO_{k-1}$-convergent by Proposition~\ref{prop:reduction_to_sparse}.
    Then we replace each hyperedge by a gadget $\str{G}'_n$, which is the star on $k+1$ vertices with the leafs as the roots.
    The sequence $\strseq{H}'' = \strseq{H}' * \strseq{G}'$ of graphs is again $\FO_{k-1}$-convergent (a.s.) by Corollary~\ref{cor:preservation_of_elementary_convergence} and Theorem~\ref{thm:general_local_convergence} as the proportion of internal vertices (i.e. vertices of $\strseq{H}'$) tends to $1$.
    
    As a witness that $\strseq{H}''$ is almost surely not $\FO_k$-convergent, we can use the formula $\phi(\tpl{x}) \in \FO_k$ stating ``there is a vertex $y$ with $\dist(y, x_i) = 2$ for each $i$''.
\end{example}

We believe that the example illustrates what is, in some sense, the typical use of gadget construction.
That is, some constructions are simple when we are allowed to use edges of an arbitrary kind.
Using gadget construction, we can transfer the properties of the constructed objects to the more restricted graph setting.
\section{Conclusions and future work}\label{sec:conclusions}

In this paper, we have investigated the convergence of sequences created by gadget construction.
We hope that our results have shown gadget construction as a useful tool for creating convergent sequences of structures.

We believe that the natural step forward is to extend the results about convergence to limit structures.
Such results were obtained for the elementary limits, however, a general treatment for modelings is yet to be developed.

Finally, we pose a few open questions.
A positive answer to the first question would provide a large simplification to our presentation of fragmentation.

\begin{question}
    Fix an equivalence $\sigma$ on $[\arity{R}]$.
    Let $\strseq{A}^\sigma$ be an $\FO(L_\sigma)$-convergent sequence when the selection of random points is restricted to non-auxiliary vertices.
    Is there an $\FO$-convergent sequence $\strseq{B}$ with $\strseq{B}^\sigma \cong \strseq{A}^\sigma$, i.e. $\str{B}_n^\sigma \cong \str{A}_n^\sigma$ for each $n \in \N$?
    Is it true at least for elementary convergence?
\end{question}

The second question asks for an extension of Theorems~\ref{thm:inverse_for_construction_language}~and~\ref{thm:inverse_for_construction_language_without_locality}.

\begin{question}
    Is there a good description of those sequences $\strseq{A}$ and $\strseq{G}$ that produce a local convergent sequence $\strseq{A}*\strseq{G}$ with convergent proportion of internal vertices?
\end{question}

\printbibliography


\end{document}